\newtheorem{thm}{Theorem}[section]
\newtheorem{lem}[thm]{Lemma}
\newtheorem{pro}[thm]{Proposition}
\newtheorem{cor}[thm]{Corollary}
\newtheorem{conj}[thm]{Conjecture}
\newtheorem{question}[thm]{Question}
\theoremstyle{definition}
\newtheorem{de}[thm]{Definition}
\newtheorem{ex}[thm]{Example}
\theoremstyle{remark}
\newtheorem{rmk}[thm]{Remark}
\numberwithin{equation}{section}
\def\subsection{\@startsection{subsection}{2}%
  \z@{.5\linespacing\@plus.7\linespacing}{.5\linespacing}%
  {\normalfont\bfseries}}
\let\leq\leqslant
\let\geq\geqslant
\newcommand{\dosomething}[1]{\textbf{[#1]}}
\newcommand{\donothing}[1]{}
\newcommand{\Comment}{\donothing}
\newcommand{\Commentson}{\renewcommand{\Comment}{\dosomething}}
\tikzstyle{pre}=[<-,shorten <=2pt,shorten >=1.3pt,>=stealth,semithick]
\tikzstyle{post}=[->,shorten >=2pt,shorten <=1.3pt,>=stealth,semithick]
\tikzstyle{dot}=[circle, draw, fill=black!50, inner sep=0pt, minimum width=3pt]
\tikzstyle{X}=[above left=-2.5pt,font=\footnotesize]
\tikzstyle{Y}=[above right=-2.5pt,xshift=-1.8pt,font=\footnotesize]
\newcommand{\Dstart}{\path (0,0) node[dot] {}}
\newcommand{\Dend}{;}
\newcommand{\Draw}[4]{+#3 node[dot](next) {} +(0,0) edge[#2] node[#4] {#1} (next.center) ++#3}
\newcommand{\Dnext}[3]{\Draw{#1}{#2}{#3}{#1}}
\newcommand{\DXo}[1]{\Dnext{X}{post}{#1}}
\newcommand{\DYo}[1]{\Dnext{Y}{post}{#1}}
\newcommand{\Dxo}[1]{\Dnext{X}{pre}{#1}}
\newcommand{\Dyo}[1]{\Dnext{Y}{pre}{#1}}
\newcommand{\DX} {\DXo{(-.7,-1)}}
\newcommand{\DXw}{\DXo{( -1,-1)}}
\newcommand{\Dx} {\Dxo{( .7, 1)}}
\newcommand{\Dxw}{\Dxo{(  1, 1)}}
\newcommand{\DY} {\DYo{( .7,-1)}}
\newcommand{\DYw}{\DYo{(  1,-1)}}
\newcommand{\Dy} {\Dyo{(-.7, 1)}}
\newcommand{\Dyw}{\Dyo{( -1, 1)}}
\newenvironment{Dtikz}{\begin{center}\begin{tikzpicture}[scale=0.6]}
  {\end{tikzpicture}\end{center}}
\newcommand{\st}{\mid}
\newcommand{\stmod}[1]{\mathsf{stmod}(#1)}
\newcommand{\StMod}[1]{\mathsf{StMod}(#1)}
\newcommand{\modu}[1]{\mathsf{mod}(#1)}
\newcommand{\Mod}[1]{\mathsf{Mod}(#1)}
\newcommand{\sHom}{\underline{\mathrm{Hom}}}
\newcommand{\Hom}{{\mathrm{Hom}}}
\newcommand{\PHom}{{\mathrm{PHom}}}
\newcommand{\sEnd}{\underline{\mathrm{End}}}
\newcommand{\thick}[1]{\mathsf{Thick}\langle #1 \rangle}
\newcommand{\loc}[1]{\mathsf{Loc}\langle #1 \rangle}
\newcommand{\len}{\mathrm{len}}
\newcommand{\gel}{\mathrm{gel}}
\newcommand{\gl}{\mathrm{gl}}
\newcommand{\rl}{\text{rad\,len }}
\newcommand{\gn}{\text{ghost\,num }}
\newcommand{\gen}{\text{gen\,num }}
\newcommand{\up}{{\uparrow}}
\newcommand{\down}{{\downarrow}}
\newcommand{\iso}{\cong}
\newcommand{\im}{\mathrm{im}}
\newcommand{\soc}{\mathrm{soc}}
\newcommand{\rad}{\mathrm{rad}}
\newcommand{\T}{\mathbb{T}}
\newcommand{\cF}{\mathcal{F}}
\newcommand{\cG}{\mathcal{G}}
\newcommand{\cI}{\mathcal{I}}
\newcommand{\cP}{\mathcal{P}}
\newcommand{\cW}{\mathcal{W}}
\newcommand{\dfn}[1]{\textbf{#1}}
\newcommand{\xqedhere}[1]{\rlap{\hbox to#1{\hfil\llap{\ensuremath{\qed}}}}}
\ifpdf  \usepackage[pdftex,bookmarks=false]{hyperref}
\else   \usepackage[hypertex]{hyperref}
\begin{document}

\title{Ghost numbers of group algebras}

\author{J. Daniel Christensen}
\address{Department of Mathematics\\
University of Western Ontario\\
London, ON N6A 5B7, Canada}
\email{jdc@uwo.ca}

\author{Gaohong Wang}
\email{gwang72@uwo.ca}

\subjclass[2010]{Primary 20C20; Secondary 16G70, 18E30, 20J06, 55P99}

\date{June 27, 2014}

\keywords{Tate cohomology, stable module category, $p$-group, generating hypothesis, ghost map}

\begin{abstract}
Motivated by Freyd's famous unsolved problem in stable homotopy theory,
the generating hypothesis for the stable module category of a finite group is the
statement that if a map in the thick subcategory generated by the trivial representation
induces the zero map in Tate cohomology, then it is stably trivial.
It is known that the generating hypothesis fails for most groups.
Generalizing work done for $p$-groups, 
we define the ghost number of a group algebra, which is a natural number that measures
the degree to which the generating hypothesis fails.
We describe a close relationship between ghost numbers and Auslander-Reiten triangles,
with many results stated for a general projective class in a general triangulated category.
We then compute ghost numbers and bounds on ghost numbers for many families of $p$-groups,
including abelian $p$-groups, the quaternion group and dihedral $2$-groups,
and also give a general lower bound in terms of the radical length, the first 
general lower bound that we are aware of.
We conclude with a classification of group algebras of $p$-groups with small ghost number
and examples of gaps in the possible ghost numbers of such group algebras.
\end{abstract}

\maketitle

\tableofcontents

\section{Introduction}

In modular representation theory, the Tate cohomology functor plays a central role, 
analogous to the role that the homotopy groups play in homotopy theory.  
Thus it is natural to study the \emph{kernel} of Tate cohomology, that is,
the collection of maps which induce the zero map in Tate cohomology. 
These maps are called \dfn{ghosts}, and are the topic of the present paper.

Let $G$ be a finite group, and let $k$ be a field whose characteristic $p$ divides the order of $G$.
We write $\StMod{kG}$ for the stable module category of $kG$, the triangulated category formed
from the module category
by killing the projectives, $\stmod{kG}$ for the full subcategory of finitely generated modules,
and $\thick k$ for the thick subcategory generated by the trivial representation,
a full subcategory of $\stmod{kG}$.
(See Section~\ref{se:gpc} for complete definitions and further background.)

The generating hypothesis (GH) for the stable module category is the statement
that if a map in $\thick k$ induces the zero map in Tate cohomology, then
it is stably trivial.  Using the terminology of the first paragraph, this
is equivalent to saying that all ghosts in $\thick k$ are trivial.
This problem is motivated by Freyd's famous conjecture in homotopy
theory~\cite{freydGH}, which is still open.

By work of Benson, Carlson, Chebolu, Christensen and Min\'a\v{c} 
(Theorem~\ref{th:bcccm} below), 
it is known that the generating hypothesis fails for most groups.
The extent to which it fails is measured by the \dfn{ghost number} of $kG$,
which is the smallest number $n$ such that every composite of $n$ ghosts
in $\thick k$ is stably trivial.
With this terminology, the generating hypothesis is the statement
that the ghost number is one.
The ghost number was studied for $p$-groups in~\cite{Gh in rep}, but even for $p$-groups
it was found to be difficult to calculate, and in most cases only
crude bounds are known.
It is a long-term goal to understand whether this invariant has a simple
description in terms of other invariants of $kG$.

In the present paper
we develop new techniques for the study of ghost numbers and use them to
make new computations in many cases.
For example, we make the first computations of the ghost numbers of 
group algebras of wild representation type at an odd prime 
($k(C_3 \times C_3)$ and others mentioned in the detailed summary below)
as well as the first computations of the ghost numbers of non-abelian
group algebras (the dihedral 2-groups).

\theoremstyle{plain}
\newtheorem*{th:C_3xC_3}{Theorem~\ref{th:C_3xC_3}}
\begin{th:C_3xC_3}
Let $G = C_3 \times C_3$, and
let $k$ be a field of characteristic $3$.
Then the ghost number of $kG$ is $3$.
\end{th:C_3xC_3}

\newtheorem*{co:D_4q}{Corollary~\ref{co:D_4q}}
\begin{co:D_4q}
Let $k$ be a field of characteristic 2.
Then the ghost number of $kD_{4q}$ is $q+1$ for all $q \geq 1$,
where $D_{4q}$ denotes the dihedral $2$-group of order $4q$, with $q$ a power of $2$.
\end{co:D_4q}

Our followup paper~\cite{CW} builds on the work here in order
to compute the ghost numbers of non-$p$-groups.
For example, using Corollary~\ref{co:D_4q}, we are
able to compute the ghost number of an arbitrary dihedral group
at the prime~2 in~\cite{CW}.

We also give many new bounds on ghost numbers, including lower bounds,
which are generally difficult to come by.  We highlight two such 
results here:

\newtheorem*{co:lower-bound}{Corollary \ref{co:lower-bound}}
\begin{co:lower-bound}
Let $G$ be a $p$-group, and let $k$ be a field of characteristic $p$. Then
\[ \frac{1}{2} \rl kG \leq \gn kG \leq \gen kG < \rl kG, \]
when $p$ is even, and
\[ \frac{1}{3} \rl kG \leq \gn kG \leq \gen kG < \rl kG, \]
when $p$ is odd.
\end{co:lower-bound}

\newtheorem*{pr:lower-bound}{Proposition~\ref{pr:lower-bound}}
\begin{pr:lower-bound}
Let $k$ be a field of characteristic $p$.
If $G$ is a group of order $p^r\!$, then
the ghost number of $kG$ is at least $(r-1)(p-1)+1$.
\end{pr:lower-bound}

Our work also includes results which are quite general, in some cases
applying to any projective class in any triangulated category.

\medskip

We now give a detailed summary of the contents of the paper.
We begin in Section~\ref{ss:stmod} by reviewing the stable module category.
In Section~\ref{ss:GH} we recall the statement of the generating hypothesis 
in this situation and state the result of Benson, Carlson, Chebolu, Christensen and Min\'a\v{c} 
that says that the GH fails unless the Sylow $p$-subgroup of $G$ is $C_2$ or $C_3$.
The ghost number, which measures the degree to which the GH fails,
is best studied using the idea of a projective class, so we introduce
projective classes and their associated invariants in Section~\ref{ss:pc}.
Briefly, a projective class consists of a collection $\cP$ of objects
(thought of as ``projective'' building blocks) and an ideal $\cI$ of morphisms
(the maps invisible to the objects in $\cP$) satisfying some axioms.
In this section, we also define the invariants we will study.
The \dfn{ghost length} of a $kG$-module $M$ is the smallest number $n$
such that every composite of $n$ ghosts in $\thick k$ starting from $M$
is stably trivial.  The ghost number that we introduced previously is
the supremum of the ghost lengths of modules in $\thick k$.
In addition, it is convenient to define the \dfn{generating length} of $M$
to be the smallest number $n$ such that every composite of $n$ ghosts
in $\StMod{kG}$ starting from $M$ is trivial, 
and the \dfn{generating number} of $kG$ to be the supremum of the
generating lengths of modules in $\thick k$.

In Section~\ref{se:AR} we present a variety of new results, many of
which hold for arbitrary (stable) projective classes in arbitrary triangulated categories.
For example, in Section~\ref{ss:length and tri}, we give new bounds on the length of an
object in a triangle in terms of the lengths of the other two objects and
the filtration of the connecting homomorphism in the powers of the ideal.
Then, in Section~\ref{ss:length and AR}, we show that the connecting map
$\gamma : Z \to \Sigma X$ in an Auslander-Reiten triangle,
which we call the \dfn{almost zero map},
has a remarkable property:  if $(\cP, \cI)$ is any projective class such
that there is a nonzero map from $Z$ in $\cI^k$, then $\gamma$ is in $\cI^k$.
So the almost zero map is in some sense a universal example of a non-zero map from $Z$.
We specialize to the case of the stable module category in Section~\ref{ss:irr},
where we show that the \dfn{heart} of an indecomposable module $M$
(the fibre of the almost zero map) has length which differs by at most one
from $M$, with respect to any projective class.
We also show that this is true for any summand of the heart,
by showing that the lengths of the domain and codomain of any
irreducible map differ by at most one.
We finish Section~\ref{se:AR} with Section~\ref{ss:gl}, which describes 
the extent to which our results hold for the ghost length, the invariant
used in defining the ghost number.

Section~\ref{se:pgroups} contains detailed computational results on
the ghost numbers of $p$-groups.
We begin by recalling some background results in Section~\ref{ss:bg}, 
such as the fact that the ghost number of $kG$ is less than the nilpotency
index of the Jacobson radical, as well as the fact that multiplication
by $x - 1$, where $x$ is a central element of $G$, is always a ghost.
In Section~\ref{ss:gen-and-socle-length} we show that the generating length invariant
is in a precise sense a stabilized version of the socle length, and show that
if these are equal for a module $M$, the same is true for $\rad(M)$ and $M/\soc(M)$.
This follows from a general result involving nested unstable projective classes in
a triangulated category.
We begin our computations in Section~\ref{ss:gn-abelian}, where we study the
ghost numbers of abelian $p$-groups.  
The main result here is an improved lower bound on the ghost number.
This follows from a result giving a lower bound on the ghost length
of induced modules for general $p$-groups.
We also compute the exact ghost length of many modules over abelian $p$-groups.
In Section~\ref{ss:gn-Q8} we show that the ghost number for the quaternion
group $Q_8$ is $3$ or $4$, improving the existing lower bound by $1$.
In Section~\ref{ss:cyclic sub}, we compute the ghost length and generating length
of certain modules induced up from a cyclic normal subgroup of a $p$-group,
generalizing the technique used for $Q_8$.
This is used in the same section to show that the ghost number and the 
radical length are within a factor of three of each other for any $p$-group.
More precisely, we show that 
$(\rl kG)/3 \leq \gn kG < \rl kG$
for $p$ odd, the first general lower bound we are aware of.
For $p = 2$, the factor of $3$ is replaced with a factor of $2$.
We also use the induction result in Section~\ref{ss:gn-D4q},
where we show that the ghost number of the dihedral $2$-group
$D_{4q}$ of order $4q$ is exactly $q+1$.
This is the longest section of the paper.
That the ghost length is at least $q+1$ follows immediately
from the induction result of the previous section, but that
it is no more than $q+1$ requires using the classification of
$kD_{4q}$-modules.
In Section~\ref{ss:gn-C3xC3} we show that the ghost number of
$k(C_3 \times C_3)$ is exactly 3.  
While $k(C_3 \times C_3)$-modules are not classifiable, we make
use of the fact that certain quotients can be classified.
Our argument also shows that the ghost number of the group algebra 
$k(C_{p^r} \times C_{p^s})$, for $p^r, p^s > 2$, is at most $p^r + p^s - 3$.
It follows that the ghost number of $k(C_3 \times C_{3^s})$ is $3^s$
and that the ghost number of $k(C_4 \times C_{2^s})$ is $2^s + 1$.
We end the paper with Section~\ref{ss:possible}, in which we
give complete lists of the group algebras of $p$-groups with
ghost numbers 1, 2 or 3, with the possible exception of $kQ_8$.
We also prove that for each prime $p$ there are gaps in the
possible ghost numbers that can occur, and state a conjecture
related to this.

\medskip

Our work also raises various questions, which we briefly summarize here.  
We have shown that the generating number is a stabilized version of
the radical length, and that both the generating number and ghost number
are within a constant factor of the radical length, but it still remains
to fully understand these new invariants and determine whether they have
an exact description in terms of existing invariants.

\begin{question}
How do the ghost number and generating number relate to other 
invariants of the group algebra?
\end{question}

Moreover, in all examples where they have been computed, 
the ghost number and the generating number agree, and we
conjecture that this is always the case.

\begin{conj}
Let $G$ be a finite group, and let $k$ be a field whose characteristic 
divides the order of $G$.
Then the ghost number of $kG$ is equal to the generating number of $kG$.
\end{conj}

We also believe that the ghost number of a general $p$-group is
bounded in the following way, as described in Section~\ref{ss:possible}.

\begin{conj}
Let $k$ be a field of characteristic $p$. 
If $G$ is a $p$-group of order $p^r$, then
\[ \text{ghost number of } k(C_{\!p}^r) \leq
   \text{ghost number of } kG       \leq
   \text{ghost number of } k(C_{p^r}) . \]
\end{conj}

\section{The generating hypothesis and the ghost projective class}
\label{se:gpc}

In this section, we recall background material which provides context to our results
and which we use in our proofs.

\subsection{The stable module category}\label{ss:stmod}

Here we recall the basics of the stable module category. 
A good reference is~\cite{Carlson}.

Let $G$ be a finite group, and let $k$ be a field whose characteristic $p$ divides the order of $G$. 
The \dfn{stable module category} $\StMod{kG}$ is a quotient category of the category $\Mod{kG}$ 
of left $kG$-modules by the ideal of maps that factor through a projective.  
Thus the objects of $\StMod{kG}$ are left $kG$-modules and the hom-sets are 
$\sHom(M,N) = [M,N] := \Hom(M,N)/\PHom(M,N)$, where $\PHom(M,N)$ denotes the stably trivial maps, 
i.e., those that factor through a projective module. 
Two modules $M$ and $N$ are isomorphic in the stable module category if and only if 
they have the same projective-free summands.
In particular, projective modules are isomorphic to zero in the stable module category.
We write $\stmod{kG}$ for the full subcategory of finitely generated $kG$-modules in $\StMod{kG}$. 
(More precisely, we include all modules which are \emph{stably isomorphic} to finitely generated
$kG$-modules.)

The stable module category is a triangulated category. 
The desuspension $\Omega M$ of a module $M$ is the kernel of any surjection $P \to M$ with $P$ projective.
This is well-defined in the stable module category by Schanuel's Lemma~\cite[Prop.~4.2]{Carlson},
and we write $\tilde{\Omega} M$ for the projective-free summand of $\Omega M$.

The group algebra $kG$ is injective as a module over itself. 
In particular, this implies that projective modules and injective modules coincide in $\modu{kG}$. 
The suspension $\Sigma N$ of a module $N$ is defined to be the cokernel
of any injection $N \to P$ with $P$ injective.
We will often write $\Omega^{-1} N$ for $\Sigma N$ since $\Omega$ and $\Sigma$
are inverse functors up to natural isomorphism.

Write $k$ for the trivial representation
and $\thick k$ for the thick subcategory generated by $k$,
the smallest full triangulated subcategory of $\StMod{kG}$ 
that is closed under retracts and contains $k$.
This is in fact a full subcategory of $\stmod{kG}$, and plays
a central role in our formulation of the generating hypothesis.
The localizing category generated by $k$, denoted $\loc k$, 
is the smallest full triangulated subcategory of $\StMod{kG}$
that is closed under arbitrary coproducts and retracts and contains $k$. 

\subsection{The generating hypothesis}\label{ss:GH}

An important feature of the stable module category is that 
the Tate cohomology of a $kG$-module $M$ is representable, 
i.e., we have a canonical isomorphism $\hat{H}^n(G, M) \iso [\Omega^n k, M]$. 

We say that the \dfn{generating hypothesis (GH)} holds for the stable module category $\StMod{kG}$ 
if and only if the Tate cohomology functor $\hat{H}^*(G,-)$ restricted to $\thick k$ is faithful.
It has been shown that the GH fails for most group 
algebras~\cite{GH for p,GH split,admit,GH per}.

\begin{thm}[Benson, Carlson, Chebolu, Christensen and Min\'a\v{c}]\label{th:bcccm}
Let $G$ be a finite group, and let $k$ be a field whose characteristic $p$ divides the order of $G$. 
Then the GH holds for $\StMod{kG}$ if and only if 
the Sylow $p$-subgroup $P$ of $G$ is either $C_2$ or $C_3$.
\end{thm}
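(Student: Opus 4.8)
The plan is to prove this in two directions, separating the ``easy'' positive cases from the ``hard'' negative direction. First I would reduce to the Sylow $p$-subgroup: the restriction and transfer maps show that $\thick{k}$ inside $\stmod{kG}$ is controlled by the Sylow $p$-subgroup $P$, in the sense that a map in $\thick{k}$ over $kG$ is stably trivial if and only if its restriction to $kP$ is, and similarly Tate cohomology detects triviality on $kG$ iff it does on $kP$ (using that the composite $\mathrm{res}\circ\mathrm{tr}$ is multiplication by the index $[G:P]$, which is invertible in $k$). So the GH for $\StMod{kG}$ is equivalent to the GH for $\StMod{kP}$, and it suffices to treat $p$-groups.

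\medskip

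For the positive direction, suppose $P = C_2$ or $C_3$. Here $kP$ is a local ring of Loewy length $p$, and in fact for $P$ cyclic the category $\stmod{kP}$ is very small: every indecomposable is $\Omega^n k$ for some $n$ (there are only $p-1$ indecomposables up to shift), so $\thick{k}$ is all of $\stmod{kP}$. For $p = 2$, $\stmod{kC_2}$ has a unique indecomposable $k$, every object is a sum of copies of $k$, and a map $k \to k$ is either an isomorphism or zero in the stable category; since $\hat H^*(C_2,-)$ is faithful on such objects, the GH holds. For $p = 3$, one checks directly on the two indecomposables $k$ and $\Omega k$ that a stably nonzero map induces a nonzero map on $\hat H^*$; this is essentially the observation that $kC_p$ for $p \leq 3$ is ``graded-commutative-like'' enough that Tate cohomology is faithful. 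I would cite the relevant computation (this is the classical part going back to the work on the GH for cyclic groups).

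\medskip

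For the negative direction, suppose $P$ is not $C_2$ or $C_3$; I must produce a nonzero ghost in $\thick{k}$ over $kP$. The strategy is to exhibit a single nonzero ghost, and the natural candidate is built from multiplication by a suitable element of the augmentation ideal. Concretely, if $z$ is a central element of order $p$ in $P$, then multiplication by $z - 1$ on $k$ (or rather the induced self-map on an appropriate object of $\thick{k}$) is a ghost, because it induces the zero map on Tate cohomology — this is exactly the ``multiplication by $x-1$ is a ghost'' principle promised in the introduction. So the content is to show this ghost is stably nonzero whenever $|P| > p$ or $p \geq 5$. One handles the two obstructions to $P$ being $C_2$ or $C_3$ separately: if $P$ has order $> p$, restrict to a subgroup of the form $C_p \times C_p$ or $C_{p^2}$ (every non-cyclic-of-prime-order $p$-group contains one of these), and the ghost restricts nontrivially there; if $P = C_p$ with $p \geq 5$, one works directly in $\stmod{kC_p}$, where the element $(z-1)^{p-2}$ gives a nonzero stable self-map of $k$ that kills Tate cohomology. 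In all cases the nontriviality is checked by an explicit module computation — e.g. over $k[C_p \times C_p]$ one computes that the relevant map does not factor through a projective.

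\medskip

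The main obstacle is the negative direction, and specifically producing a ghost that provably does not factor through a projective: showing a map is a ghost is easy (Tate cohomology vanishes for degree reasons), but showing stable nontriviality requires concrete control of the projective modules and the factorization, which is why one reduces to the smallest bad subgroups $C_p \times C_p$ and $C_{p^2}$ (and $C_p$ for $p \geq 5$) where $kG$-modules, or at least enough of them, are understood. I would expect the bulk of the work — and likely the part the authors defer to the cited papers \cite{GH for p,GH split,admit,GH per} — to be exactly this case analysis and the explicit verification that the candidate ghosts are nonzero in the stable category.
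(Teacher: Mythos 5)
The paper does not actually prove this theorem — it is stated as a known result of Benson--Carlson--Chebolu--Christensen--Min\'a\v{c} and cited to~\cite{GH for p,GH split,admit,GH per}. So there is no internal proof to compare against; I can only assess your outline on its own terms and against the structure of the cited literature.

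Your broad architecture (reduce to the Sylow $p$-subgroup, positive direction by finiteness of the small cyclic cases, negative direction by exhibiting multiplication-by-$(x-1)$ ghosts) does match the spirit of the cited papers, but several steps are asserted too casually, and one is backwards. First, the Sylow reduction is not an ``if and only if'' in the way you state it. Faithfulness of restriction to $kP$ is standard, and $\hat H^*(P,\mathrm{res}\,f)=0$ does imply $\hat H^*(G,f)=0$ by a transfer argument; but the converse — that a $kG$-ghost restricts to a $kP$-ghost — is not automatic, because $[\Omega^n k_P, \mathrm{res}\,M]_P \cong [\Omega^n\, k[G/P], M]_G$ and $k[G/P]$ is not a sum of suspensions of $k_G$ in general. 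The positive direction for a general $G$ with Sylow $C_2$ or $C_3$ is therefore genuinely nontrivial; it is essentially the content of~\cite{admit} and~\cite{GH per}, and relies on periodicity of Tate cohomology rather than a formal transfer trick. Second, in the negative direction you say ``the ghost restricts nontrivially'' to the small subgroup — this is the wrong direction. One produces a nonzero ghost over the small subgroup $H$ (one of $C_{p^2}$, $C_p\times C_p$, or $C_p$ with $p\geq 5$) and \emph{induces up} to $G$, using the fact that ghosts induce up to ghosts (the adjunction $[\Omega^n k_G, M\!\up^G]_G\cong[\Omega^n k_H, M]_H$) and that the resulting map is still stably nonzero (this is exactly what Proposition~\ref{pr:induced-from-cyclic} of the paper formalizes). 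Third, a small but real slip: $z-1$ and $(z-1)^{p-2}$ act by zero on the trivial module $k$, so there is no content in multiplying $k$ by them; the candidate ghosts must act on a non-trivial module, e.g.\ multiplication by $g-1$ on an appropriate $kC_p/(g-1)^i$. These are fixable but they are precisely the points where, as you yourself anticipate, ``the bulk of the work'' in the cited papers lives — so the outline as written does not yet constitute a proof.
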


It is worth pointing out here why we restrict to $\thick k$. 
It is known that whenever the thick subcategory is not all of $\stmod{kG}$,
there are non-projective modules whose Tate cohomology is zero. 
The identity map on such a module is sent to zero by $\hat{H}^*(G,-)$,
so the GH would be trivially false if we included such modules.
Restricting to $\thick k$ prevents this from happening.
In general, the stable module category is generated 
by the simple modules as a triangulated category. 
For a $p$-group $G$, the trivial representation $k$ is the only simple module, 
so we have that $\thick k = \stmod{kG}$ in this case.

We call a map in $\StMod{kG}$ that is in the kernel of the Tate cohomology functor a
$\dfn{ghost}$.  Thus the GH is the statement that all ghosts in $\thick k$ are stably
trivial. 
When the GH fails, the vanishing of composites of ghosts gives a measure
of the failure and leads to invariants of modules and of $kG$.
This is formalized in the idea of a projective class.

\subsection{The ghost projective class}\label{ss:pc}

In this section, we introduce the invariants that are the subject of this paper:
generating length, ghost length, generating number and ghost number.
These are defined using the concept of a projective class:

\begin{de}
Let\/ $\T$ be a triangulated category. 
A \dfn{projective class} in\/ $\T$ consists of a class $\cP$ of objects of\/ $\T$
and a class $\cI$ of morphisms of\/ $\T$ such that:
\begin{enumerate}[(i)]
\item $\cP$ consists of exactly the objects $P$ such that every composite
      $P \to X \to Y$ is zero for each $X \to Y$ in $\cI$,
\item $\cI$ consists of exactly the maps $X \to Y$ such that every composite
      $P \to X \to Y$ is zero for each $P$ in $\cP$.
\item for each $X$ in\/ $\T$, there is a cofibre sequence $P \to X \to Y$ with 
      $P$ in $\cP$ and $X \to Y$ in $\cI$.
\end{enumerate}
In this paper, we make the additional assumption that the projective class is \dfn{stable},
that is, that $\cP$ (or equivalently $\cI$)
is closed under suspension and desuspension.
With slight alterations, most of our results remain true without this assumption, 
but the extra bookkeeping complicates the arguments.
The one exception is that in Section~\ref{ss:gen-and-socle-length} we make use
of an unstable projective class.
%
\end{de}

\begin{rmk} 
It follows from the definition that $\cP$ is closed 
under arbitrary coproducts and retracts, and that $\cI$ is an ideal.
\end{rmk}

We write $\cG$ for the ideal of ghosts in the stable module category, 
and $\cF$ for all retracts of direct sums of suspensions of $k$ in $\StMod{kG}$.
For a module $M \in \StMod{kG}$, since $\hat{H}^n(G,M) \iso [\Omega^n k, M]$,
we can form a map $\oplus \, \Omega^i k \to M$ that is surjective on Tate cohomology 
by assembling sufficiently many homogeneous elements in $\hat{H}^*(G,M)$. 
Completing this map into a triangle in $\StMod{kG}$
\begin{equation}\label{eq:univ} 
\Omega U_M \to \oplus \, \Omega^i k \to M \xrightarrow{\phi_M} U_M, 
\end{equation}
we get a ghost $\phi_M: M \to U_M$. 
The map $\phi_M$ is a (weakly) universal ghost in the sense that every ghost 
out of $M$ factors though it, but the factorization is not necessarily unique. 
It follows easily that $(\cF,\cG)$ forms a projective class in $\StMod{kG}$. 
This is called the \dfn{ghost projective class}.

While the ghost projective class is the focus of this paper, some of
our results apply to any projective class, so we mention two other examples
at this point:
The \dfn{simple ghost projective class} is the projective class whose 
projectives are generated by all simple objects, and it was proposed
for study in~\cite{GH general} as a way to avoid focusing on $\thick k$.
And the \dfn{strong ghost projective class} is the projective class
whose ideal consists of the maps which are ghosts under restriction
to every subgroup.  
The last example has been studied by Carlson, Chebolu and Min\'a\v{c}
in work in progress, and all three of the projective classes defined
above are studied in~\cite{CW}.

For any projective class $(\cP, \cI)$, there is a sequence of 
\dfn{derived projective classes} $(\cP_n,\cI^n)$~\cite{Chr}.
The ideal $\cI^n$ consists of all $n$-fold composites of maps in $\cI$, 
and $X$ is in $\cP_n$ if and only if it is a retract of an object $M$ that
sits inside a cofibre sequence $P \to M \to Q$ with $P \in \cP_1=\cP$ and 
$Q \in \cP_{n-1}$. 
For $n=0$, we let $\cP_0$ consist of all zero objects and $\cI^0$ consist of all
maps in $\T$. 
The \dfn{length} $\len_{\cP}(X)$ of an object $X$ of $\T$ with respect to $(\cP, \cI)$
is the smallest $n$ such that $X$ is in $\cP_n$, 
if this exists.
The fact that each pair $(\cP_n, \cI^n)$ is a projective class implies that
the length of $X$ is equal to the smallest $n$ such that every map in $\cI^n$
with domain $X$ is trivial.

The length of a module $M$ with respect to the ghost projective class
is called the \dfn{generating length} of $M$, 
and this exists when $M$ is in $\thick k$.  
But since we are interested in the collection $\cG_t$ of ghosts in $\thick k$, 
we also get another invariant.
We describe both invariants, and the associated invariants of $kG$, in the
following definition, generalizing the definition given in~\cite{Gh in rep}
for $p$-groups.

\begin{de} \ 
\begin{itemize}
\item The \dfn{generating length} $\gel(M)$ of $M \in \thick k$ is the smallest 
      $n$ such that $M \in \cF_n$.  That is, $\gel(M) = \len_{\cF}(M)$.
\item The \dfn{ghost length} $\gl(M)$ of $M \in \thick k$ is the smallest
      integer $n$ such that every map in $(\cG_t)^n$
      with domain $M$ is trivial.
\item The \dfn{generating number} of $kG$ is the least upper bound of the
      generating lengths of modules in $\thick k$. 
\item The \dfn{ghost number} of $kG$ is the least upper bound of the
      ghost lengths of modules in $\thick k$.
\end{itemize}
\end{de}

With this terminology, the generating hypothesis is the statement
that the ghost number of $kG$ is $1$.

Let $M$ be in $\thick k$.
Since each $(\cF_n, \cG^n)$ is a projective
class and $(\cG_t)^n \subseteq \cG^n$, it follows that 
\begin{align*}
                     \gl(M)  &\leq  \gel(M)
\intertext{and therefore that}
 \text{ghost number of } kG &\leq \text{generating number of } kG.
\end{align*}
When $G$ has periodic Tate cohomology, the coproduct in~\eqref{eq:univ}
can be taken to be finite, and it follows that the ghost projective
class restricts to a projective class in 
$\thick k$~\cite{Gh in rep}.
This implies that equality holds in this case.
We don't know whether equality holds in general, except for the
trivial observation that 
$M \iso 0$ if and only $\gel(M) = 0$ if and only if $\gl(M) = 0$
and the less trivial fact that $\gel(M) = 1$ if and only if $\gl(M) = 1$
(see Corollary~\ref{cor:M and HM} or~\cite{GH split}).
Thus the GH is equivalent to the generating number of $kG$ being $1$.
See Remark~\ref{re:gl=gel} for further discussion of whether
ghost length equals generating length.

\section{Auslander-Reiten triangles and generating lengths}\label{se:AR}

In this section, we explain how
Auslander-Reiten triangles (in short, A-R triangles) provide examples of ghosts,
and, more generally, of non-trivial maps in $\cI^n$ for $n$ as large as possible,
for any projective class $(\cP, \cI)$.
This extends the work of~\cite{GH split}, where these triangles are
called ``almost split sequences.''
Because we have in mind applications to other projective classes, in this
section we state many of our results for a general projective class in a general
triangulated category.

In Section~\ref{ss:length and tri}, we give results about the
relationship between the lengths of the objects in a triangle when one of
the maps is in a power $\cI^m$ of the ideal.
In Section~\ref{ss:length and AR}, we recall A-R triangles and prove that the
third map in an A-R triangle is the longest possible non-trivial composite of 
maps in $\cI$ with the given domain. 
In Section~\ref{ss:irr}, we apply these results to the study of
lengths in the stable module category, and also show a close relationship
between lengths and irreducible maps.
Finally, in Section~\ref{ss:gl} we explain the extent to which our
results on generating length are true for ghost length.

\subsection{Relations between the lengths of objects in a triangle}\label{ss:length and tri}


Consider a projective class $(\cP,\cI)$ in a triangulated category $\T$. 
Let 
\[ X \xrightarrow{\alpha} Y \xrightarrow{\beta} Z \xrightarrow{\gamma} \Sigma X \]
be a triangle in $\T$, where $X$, $Y$ and $Z$ have finite lengths $k$, $n$ and $l$, 
respectively. 
We know that $n \leq k+l$~\cite{Chr}. 
Rotating the triangle, we also get $l \leq n+k$ and $k\leq n+l$. 
Here we show that when $\gamma$ is in $\cI^m$, one can refine these inequalities
by subtracting $m$ from $l$.
Our methods also show that $n \geq m$.
Note that $\cI^0$ consists of all maps in $\T$.

\begin{lem}\label{le:gel+ar}
Let $(\cP,\cI)$ be a projective class in a triangulated category $\T$, and let
\[ X \xrightarrow{\alpha} Y \xrightarrow{\beta} Z \xrightarrow{\gamma} \Sigma X \]
be a triangle in $\T$, where $X$, $Y$ and $Z$ have finite lengths $k$, $n$ and $l$, 
respectively, and $\gamma \in \cI^m$ with $m \leq l$. 
Then
\[
  \len_{\cP}(Y) = n \leq \max(k-m+l,l).
\]
\end{lem}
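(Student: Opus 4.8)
The plan is to peel off one ``projective layer'' from $Z$ using the axiom that gives a cofibre sequence $P \to Z \to Z'$ with $P \in \cP$ and $Z \to Z'$ in $\cI$, and to keep track of how $\gamma$ factors through these layers. More precisely, since $\gamma \in \cI^m$, I would write $\gamma$ as a composite $Z = Z_0 \xrightarrow{g_1} Z_1 \xrightarrow{g_2} \cdots \xrightarrow{g_m} Z_m \xrightarrow{h} \Sigma X$ where each $g_i \in \cI$ and $h$ is the remaining factor (which is arbitrary, i.e.\ in $\cI^0$). Here I want the $Z_i$ to be honest ``quotients'' of $Z$ in the sense of the defining cofibre sequences for the derived projective classes: choose $P_i \to Z_{i-1} \to Z_i$ with $P_i \in \cP$ and $Z_{i-1} \to Z_i$ in $\cI$, so that by construction $\len_{\cP}(Z_i) \le \len_{\cP}(Z_{i-1}) - 1$ as long as we haven't exhausted the length, giving $\len_{\cP}(Z_m) \le l - m$ (this uses $m \le l$). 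The point is that a general factorization of $\gamma$ through $m$ maps in $\cI$ can be replaced by this ``canonical'' one, because the universal property of the projective-class cofibre sequences lets any map in $\cI$ out of $Z_{i-1}$ factor through $Z_{i-1} \to Z_i$.

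Next I would build the octahedral/connecting machinery: form the composite map $Y \xrightarrow{\beta} Z \to Z_m$, which I'll call $\beta'$, and complete it to a triangle $X' \to Y \xrightarrow{\beta'} Z_m \to \Sigma X'$. The connecting map of this new triangle is $Z_m \xrightarrow{h} \Sigma X \to \Sigma X'$ (or can be arranged to be compatible with $h$ via the octahedron on $\beta$ and $Z \to Z_m$), and crucially $X'$ sits in a triangle together with $X$ and (a shift of) the fibre of $Z \to Z_m$. That fibre is built from $P_1, \dots, P_m \in \cP$, so it has length at most $1$ — actually it lies in $\cP_m$ but more importantly, iterating, the fibre $W$ of $Z \to Z_m$ satisfies $\len_\cP(\Sigma^{-1} W) \le$ something I can control, and in fact I only need that $W$ is an extension of objects of $\cP$, hence $X'$ is an extension of $X$ by $\Sigma^{-1} W$. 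The cleaner route: by the rotated triangle $\Sigma^{-1} Z_m \to X' \to Y \to Z_m$ is not quite it; rather I use that $Y$ fits in $X' \to Y \to Z_m$, so $\len_\cP(Y) \le \len_\cP(X') + \len_\cP(Z_m) \le \len_\cP(X') + (l-m)$ by the subadditivity $n \le k + l$ cited from~\cite{Chr}. So it remains to bound $\len_\cP(X')$.

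To bound $\len_\cP(X')$: from the octahedron applied to $Y \xrightarrow{\beta} Z \xrightarrow{q} Z_m$ I get a triangle $X \to X' \to W \to \Sigma X$ where $W = \Sigma^{-1}(\text{cofibre of } q)[1]$ is the shift of the fibre of $q$, and $W$ is a successive extension of $P_1, \dots, P_m$, all in $\cP$, hence $W \in \cP_m$ — but we can do better, we only need $W \in \cP_1$ is false; however what I actually need is just an upper bound on $\len_\cP(X')$ via subadditivity: $\len_\cP(X') \le \len_\cP(X) + \len_\cP(W) \le k + m$? That gives $n \le k + m + l - m = k + l$, which is too weak. So this naive bookkeeping loses the gain; the real argument must be smarter. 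The fix is to \emph{not} split $\gamma$ all the way and reassemble crudely, but instead to induct on $m$: for $m = 0$ the statement is the known $n \le k + l = \max(k+l, l)$. For the inductive step, peel off a single map: write $\gamma = \gamma' \circ g$ with $g : Z \to Z'$ in $\cI$, $\gamma' \in \cI^{m-1}$, and $Z' \in \cP_{l-1}$ (choosing $g$ via the projective-class cofibre sequence so that $Z'$ genuinely has smaller length). The octahedron on $\beta$ and $g$ produces a triangle $Y' \to Y \to P \to \Sigma Y'$ with $P \in \cP$ (so $\len_\cP(Y) \le \len_\cP(Y') + 1$... wrong direction again) — I should instead get a triangle $X \xrightarrow{\alpha'} Y' \to Z' \xrightarrow{\gamma'} \Sigma X$ with the \emph{same} $X$, where $Y'$ is the fibre of $Y \to Z'$, related to $Y$ by a triangle $\Sigma^{-1} P \to Y' \to Y \to P$, $P \in \cP$; hence $\len_\cP(Y) \le \len_\cP(Y')$ by subadditivity (since $\Sigma^{-1} P \in \cP$ has length $\le 1$, and actually a triangle with one term in $\cP$ gives $\len_\cP(Y) \le \len_\cP(Y') + 1$, not equality — hmm, but rotating, $Y$ is the cofibre of $\Sigma^{-1} P \to Y'$, so $\len_\cP(Y) \le \len_\cP(\Sigma^{-1}P) + \len_\cP(Y') \le 1 + \len_\cP(Y')$; conversely $Y'$ is the fibre so $\len_\cP(Y') \le 1 + \len_\cP(Y)$). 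Applying the inductive hypothesis to the triangle $X \to Y' \to Z' \xrightarrow{\gamma'} \Sigma X$ with $\gamma' \in \cI^{m-1}$, $\len_\cP(Z') \le l-1$, $\len_\cP(X) = k$ gives $\len_\cP(Y') \le \max(k - (m-1) + (l-1), l-1) = \max(k-m+l, l-1)$; then $\len_\cP(Y) \le 1 + \len_\cP(Y') \le \max(k-m+l+1, l)$ — off by one. So the single-step bound for $Y$ versus $Y'$ must be sharp: the key technical lemma I expect to need (and this is the main obstacle) is that when $Z \to Z'$ is the \emph{specific} map from a projective-class cofibre sequence $P \to Z \to Z'$, the induced map $Y' \to Y$ on fibres is such that $\len_\cP(Y) \le \len_\cP(Y')$ exactly — equivalently, that $Y$ is a retract-up-to-the-triangle situation where the $P$ can be absorbed. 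I would prove this by showing the composite $Y' \to Y \to Z$ lands where needed and using condition (i)/(ii) of the projective class to see the connecting map $\Sigma^{-1}P \to Y'$ of that triangle is itself killed appropriately, or more likely by a direct diagram chase establishing $\len_\cP(Y) = \len_\cP(Y')$ whenever $Y' \to Y$ is built this way, using that $Z \to Z'$ being in a projective-class cofibre sequence means $Z'$ already ``accounts for'' a full projective layer of $Z$. That normalization step — choosing the factorization of $\gamma$ so that each intermediate object drops length by exactly one and the corresponding modification of $Y$ costs nothing — is the crux; once it is in place, the induction on $m$ closes cleanly and yields $n \le \max(k - m + l, l)$. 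The bound $n \ge m$ mentioned in the surrounding text would come for free from the same factorization, since a nonzero-ish composite of $m$ maps in $\cI$ out of (something related to) $Y$ forces $\len_\cP(Y) \ge m$, but that is stated as a separate remark and not part of this lemma.
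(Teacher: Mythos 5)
Your approach --- inductively peeling off one projective layer of $Z$ at a time and tracking how $Y$ changes via an octahedron --- is genuinely different from the paper's, and it does not close. You correctly diagnose that the naive bookkeeping loses a unit at each step, and you propose to repair this with a ``key technical lemma'' asserting that if $P \to Z \xrightarrow{g} Z'$ is the projective-class cofibre sequence, $\gamma = \gamma' g$, and $Y''$ sits in the triangle $X \to Y'' \to Z' \xrightarrow{\gamma'} \Sigma X$ (so that the octahedron gives $P \to Y \to Y'' \to \Sigma P$ with $P\in\cP$), then $\len_\cP(Y) \le \len_\cP(Y'')$. That lemma is false. Take $m=1$ and let $\gamma$ be the universal map $g\colon Z \to Z'$ itself, so $\Sigma X \iso Z'$, $\gamma'$ is an isomorphism, and hence $Y'' \iso 0$, while $Y \iso P$ has $\len_\cP(Y) = 1 > 0$. (The conclusion of Lemma~\ref{le:gel+ar} is still satisfied here, since with $k=l-1$ one has $\max(k-m+l,l)\ge 1$; it is only your intermediate inequality that fails.) A weaker patch such as $\len_\cP(Y)\le\max(\len_\cP(Y''),1)$ also fails in general: a non-split triangle $P \to Y \to P' \to \Sigma P$ with $P,P'\in\cP$ can have $\len_\cP(Y)=2$. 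There is a second, smaller confusion in the write-up: the object fitting in a triangle $X \to ? \to Z' \xrightarrow{\gamma'} \Sigma X$ with the \emph{same} $X$ is the fibre of $\gamma'$ (shifted), not the fibre of the composite $Y \to Z \to Z'$; the latter instead sits in a triangle with $X$ and $P$, which is why your first reassembly gave $k+l$ with no gain.

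The paper avoids induction entirely. Set $n' = \max(k,m)$ and test $Y$ against an arbitrary $\phi\colon Y \to W$ in $\cI^{n'}$. Since $n'\ge k$, $\phi\alpha = 0$, so $\phi = \tilde\phi\beta$ for some $\tilde\phi\colon Z\to W$. One then checks directly that $\tilde\phi\in\cI^m$: for $\psi\colon V\to Z$ with $V\in\cP_m$, the hypothesis $\gamma\in\cI^m$ gives $\gamma\psi=0$, hence $\psi=\beta\tilde\psi$, and then $\tilde\phi\psi=\phi\tilde\psi=0$ because $n'\ge m$. Finally, post-composing with any $g\in\cI^{l-m}$ gives $g\tilde\phi\in\cI^{l}$ out of $Z$, which vanishes since $\len_\cP(Z)=l$, so $g\phi=0$; thus every map in $\cI^{n'+l-m}$ out of $Y$ is zero, i.e.\ $n\le\max(k,m)+l-m=\max(k-m+l,l)$. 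The gain over crude subadditivity comes from the factoring-through-$Z$ step combined with the fact that $\gamma\in\cI^m$ upgrades the factored map to $\cI^m$ --- a global, one-shot observation rather than a per-layer one, which is precisely what your peeling strategy cannot see.
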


Note that if $m \geq l$, then $\gamma$ must be zero, and so the restriction
to $m \leq l$ is natural.  
When $m = l$, the triangle splits, and the lemma says that $n \leq \max(k,l)$. 

\begin{proof}
Let $n' = \max(k,m)$, and let $\phi: Y \to W$ be in $\cI^{n'}$.
Then $\phi \circ \alpha$ is zero (since $n' \geq k$), 
so $\phi$ factors through a map $\tilde{\phi}: Z \to W$.
We claim that $\tilde{\phi}$ is in $\cI^m$. 
Consider the diagram
\[
\xymatrix{
                   &                        & V \ar@{-->}[dl]_{\tilde{\psi}} \ar[d]_{\psi} \\
 X \ar[r]^{\alpha} & Y \ar[r] \ar[d]_{\phi} & Z \ar@{-->}[dl]^{\tilde{\phi}} \ar[r]^{\gamma} & \Sigma X \\
                   & W
}
\]
with $\psi: V \to Z$ being any map from an object $V\in \cP_m$. 
Now $\gamma \in \cI^m$, so $\gamma\circ\psi$ is zero, 
and $\psi$ factors through some map $\tilde{\psi}:V \to Y$. 
Hence $\tilde{\phi} \circ \psi = \phi \circ \tilde{\psi}$ is zero (since $n' \geq m$),
and the claim follows.
If $g: W \to W'$ is in $\cI^{l-m}$,
then $g \circ \tilde{\phi}$ is zero because $Z$ has length $l$.
Then $g \circ \phi$ is zero, meaning that the length of $Y$ is at most $n'+l-m$.
\end{proof}

\begin{lem}\label{le:gl+ar}
Let $(\cP,\cI)$ be a projective class in a triangulated category $\T$, and let
\[ X \xrightarrow{\alpha} Y \xrightarrow{\beta} Z \xrightarrow{\gamma} \Sigma X \]
be a triangle in $\T$, where $X$, $Y$ and $Z$ have finite lengths $k$, $n$ and $l$, 
respectively, and $\gamma \in \cI^m$ with $m \leq l$. 
%
%
Then
\[
  \len_{\cP}(Y) = n \geq \max(k-l+m,m).
\]
\end{lem}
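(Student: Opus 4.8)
The plan is to prove the two inequalities $n \geq k - l + m$ and $n \geq m$ separately, in each case by exhibiting a nonzero composite of the appropriate number of maps in $\cI$ with domain $Y$. The lengths $k$, $n$, $l$ are finite by hypothesis, and the key tool is the characterization (from the derived projective class machinery recalled in Section~\ref{ss:pc}) that $\len_{\cP}(Y) \geq t$ if and only if there exists a nonzero map in $\cI^t$ with domain $Y$.

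For the bound $n \geq m$: since $X$ has length $k$, there is a map $u \colon X \to P$ in $\cI^k$ that detects length, but what I actually want is a map out of $Y$. First I would observe that since $m \leq l$ and $\gamma \in \cI^m$ is, in the relevant case, nonzero (if $\gamma = 0$ the triangle splits and the claim is easy to check directly), the length of $Z$ already forces $m \leq l$; more importantly, I would use $\gamma$ itself. Rotating the triangle to $Y \xrightarrow{\beta} Z \xrightarrow{\gamma} \Sigma X \xrightarrow{-\Sigma\alpha} \Sigma Y$ is not quite what I want either. Instead, the cleaner route is: suppose for contradiction that $n < m$, so every map in $\cI^n$ out of $Y$ is zero, and in particular $\beta \colon Y \to Z$, which I claim lies in $\cI$... no — $\beta$ need not be a ghost. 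The right argument is the dual of the proof of Lemma~\ref{le:gel+ar}: take $\phi \colon Y \to W$ in $\cI^n$; I want to show that if $n < \max(k-l+m, m)$ then some such $\phi$ is forced to be nonzero, using that $\gamma$ is a nonzero element of $\cI^m$. Concretely, complete $\gamma$ so that $\beta$ appears; precompose a length-detecting map for $Z$ with $\beta$.

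Here is the argument I would actually write. Let $g \colon Z \to W$ be a map in $\cI^l$ that is \emph{not} killed by further composition in the sense needed — more usefully, pick $h \colon \Sigma X \to V$ in $\cI^{k-m}$ (this requires $k \geq m$; when $k < m$ the inequality $n \geq k - l + m$ may be weaker than $n \geq m$, handled by the second part, so assume $k \geq m$). Then $h \circ \gamma \in \cI^{(k-m)+m} = \cI^k$; I would like this composite with domain $Z$ to be nonzero, but that is not automatic. The honest obstacle, and the main one, is producing a \emph{nonzero} long composite out of $Y$ from the data $\gamma \in \cI^m \setminus \{$maps killed too soon$\}$: knowing $\gamma \in \cI^m$ only says $\gamma$ is \emph{at least} that deep, and one needs a lower complexity statement. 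I expect the resolution is to run the factorization argument of Lemma~\ref{le:gel+ar} in reverse: given that every map in $\cI^n$ out of $Y$ vanishes, show every map in $\cI^{n+l-m}$ out of ... or rather, show $\gamma$ itself would have to lie in $\cI^{\text{something} > m}$ forcing it to be $0$ when combined with $Z$ having length exactly $l$ — but $\gamma$ is only asserted to be \emph{in} $\cI^m$, not to witness depth exactly $m$, so this needs care. I would look for the statement in the form: if $\len_\cP(Y) = n$, then $\gamma \in \cI^{\max(k-n+l, ?)}$ or similar, contrapositive to Lemma~\ref{le:gel+ar}, and then combine with $m \leq$ (that exponent) to extract $n \geq k - l + m$ and $n \geq m$.

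I expect the main obstacle to be exactly this asymmetry between "$\gamma$ lies in $\cI^m$" (an upper bound on how visible $\gamma$ is) versus needing a lower bound on $\len_\cP(Y)$; the proof presumably circumvents it by chasing the same octahedral/factorization diagram as in Lemma~\ref{le:gel+ar} but reading off the conclusion about $\len_\cP(Y)$ rather than assuming it — i.e. it is genuinely a \emph{restatement} of that lemma's inequality after rotating the triangle (replace $X \to Y \to Z$ by $Z \to \Sigma X \to \Sigma Y$, so that the connecting map becomes $-\Sigma\beta$ and the hypothesis $\gamma \in \cI^m$ feeds in as the middle map having a factorization property). So the plan concretely: (1) rotate the triangle so that $Y$ plays the role of the "$Z$"-slot; (2) apply Lemma~\ref{le:gel+ar} (or its proof) to the rotated triangle with the roles of $m$ and the lengths permuted accordingly; (3) bookkeep the indices to land on $n \geq \max(k - l + m, m)$; (4) separately dispatch the degenerate case $\gamma = 0$. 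Step (2)–(3), the index bookkeeping under rotation, is where the real work lies.
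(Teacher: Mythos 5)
Your proposal does not contain a proof; it is an exploration that circles the right idea but never executes it, and the concrete four-step plan you land on at the end would not succeed as stated.

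The specific gap: you propose to rotate the triangle and then "apply Lemma~\ref{le:gel+ar}" to the rotated triangle. This does not work, because rotation moves $\gamma$ out of the connecting-map slot. Whichever way you rotate $X \to Y \to Z \to \Sigma X$ so that $Y$ sits in the ``$Z$-slot'', the new connecting map becomes (a suspension of) $\alpha$ or $\beta$, about which you have no membership-in-$\cI^{m'}$ information. So Lemma~\ref{le:gel+ar} applied to the rotated triangle only yields the trivial bound with $m' = 0$. You correctly sense the asymmetry — that $\gamma \in \cI^m$ is an upper bound on visibility rather than a lower bound on $\len_\cP(Y)$ — and you do say ``or its proof'' as an alternative, but you never carry out the factorization argument, so the core of the lemma is missing from your write-up.

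What the paper actually does is dualize the \emph{proof} of Lemma~\ref{le:gel+ar} rather than invoke its statement. Concretely, for $n \geq k - l + m$: take any $\phi \colon X \to W$ in $\cI^{l-m}$. Then $\phi \circ \Sigma^{-1}\gamma$ lies in $\cI^{l}$ and has domain $\Sigma^{-1}Z$ of length $l$, so it vanishes; hence $\phi$ factors as $\tilde{\phi} \circ \alpha$ for some $\tilde{\phi} \colon Y \to W$. Now for any $g \colon W \to W'$ in $\cI^{n}$, the map $g\tilde{\phi}$ is in $\cI^n$ with domain $Y$ of length $n$, hence zero, so $g\phi = 0$. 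Since $g\phi$ ranges over all of $\cI^{n+l-m}$ with domain $X$, we conclude $k \leq n + l - m$. The bound $n \geq m$ is proved by the symmetric variant (factor a map in $\cI^n$ out of $\Sigma^{-1}Z$ through $X$ using that its precomposite with $\Sigma^{-1}\beta$ vanishes, then compose with $\cI^{l-m}$ and use that the result passes through $\Sigma^{-1}\gamma \in \cI^m$). Note also that your intended split into cases ``$\gamma = 0$'' and ``$\gamma \neq 0$'' is unnecessary: the factorization argument handles both uniformly, and in particular the proof never needs $\gamma$ to be nonzero or to ``witness depth exactly $m$''.
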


When $m=l$, this says that $n \geq \max(k,l)$,
so the two lemmas together recover the fact that when the triangle
splits, $n = \max(k,l)$.

\begin{proof}
We prove that the length of $Y$ is at least $k-l+m$. 
The other inequality can be proved similarly. 

Consider a map $\phi: X \to W$ in $\cI^{l-m}$. 
Since $\phi \circ \Sigma^{-1}\gamma$ is in $\cI^l$ and has domain $\Sigma^{-1} Z$
of length $l$, it is zero
and $\phi$ factors through a map $\tilde{\phi} : Y \to W$:
\[
\xymatrix@C+5pt{
 \Sigma^{-1}Z \ar[r]^-{\Sigma^{-1}\gamma} & X \ar[r] \ar[d]_-(0.45){\phi} & Y \ar[r] \ar@{-->}[dl]^-(0.4){\tilde{\phi}} & Z \\
                                          & W & & .
}
\]
Let $g: W \to W'$ be in $\cI^n$. 
Then $g \circ \tilde{\phi}$ is zero because $Y$ has length $n$,
hence any map in $\cI^{n+l-m}$ with domain $X$ is zero. 
This implies that $k \leq n+l-m$, i.e., that $n \geq k-l+m$.
\end{proof}

\subsection{Auslander-Reiten triangles give composites of ghosts}\label{ss:length and AR}


We begin by recalling the definition.

\begin{de}
  Let $\T$ be a triangulated category.
  A triangle 
  $X \xrightarrow{\alpha} Y \xrightarrow{\beta} Z \xrightarrow{\gamma} \Sigma X$ 
  is called an \dfn{Auslander-Reiten triangle}, if
  \begin{enumerate}[(a)]
  \item $\gamma \neq 0$,
  \item \label{left a-s} any map $X \to Y'$ that is not split monic factors through $\alpha$,
  \item \label{right a-s} any map $Y' \to Z$ that is not split epic factors through $\beta$.
  \end{enumerate}
\end{de}

A map $\alpha$ that is not split monic and satisfies~\eqref{left a-s} is said to be
\dfn{left almost split}.
Dually, a map $\beta$ that is not split epic and satisfies~\eqref{right a-s} is said to be
\dfn{right almost split}.

We know that Auslander-Reiten triangles exist in great generality.

\begin{thm}[Krause,~\cite{Krause}]\label{th:AR}
Let $\T$ be a triangulated category with all small coproducts, 
and suppose that all cohomological functors are representable.
Let $Z$ be a compact object in $\T$ with local endomorphism ring. 
Then there exists an Auslander-Reiten triangle
\[ X \xrightarrow{\alpha} Y \xrightarrow{\beta} Z \xrightarrow{\gamma} \Sigma X. \]
The triangle is unique up to a non-canonical isomorphism. \qed
\end{thm}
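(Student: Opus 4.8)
The plan is to reconstruct the Brown-representability argument of Krause: one manufactures a cohomological functor out of $Z$ whose representing object carries the connecting map of the desired triangle. Write $\Gamma = \mathrm{End}_\T(Z)$, a local ring by hypothesis, with maximal ideal $\mathfrak m = \rad\Gamma$ and unique simple module $S = \Gamma/\mathfrak m$, and fix an injective envelope $I$ of $S$ over $\Gamma$; then $\soc_\Gamma I$ is simple and is annihilated by $\mathfrak m$. Since $Z$ is compact, the functor $\Hom_\T(Z,-)$ is homological and preserves coproducts, so composing it with the contravariant exact functor $\Hom_\Gamma(-,I)$ (exact because $I$ is injective) produces a functor
\[
  F \;=\; \Hom_\Gamma\bigl(\Hom_\T(Z,-),\,I\bigr)\colon\ \T^{\mathrm{op}}\longrightarrow \mathrm{Ab}
\]
that is cohomological and sends coproducts to products. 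By the standing hypothesis that cohomological functors on $\T$ are representable, there is an object $W$ and a natural isomorphism $F \iso \Hom_\T(-,W)$; I set $\Sigma X := W$.

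Evaluating this isomorphism at $Z$ yields $\Hom_\T(Z,\Sigma X)\iso F(Z) = \Hom_\Gamma(\Gamma,I)\iso I$, and chasing the precomposition action of $\Gamma = \mathrm{End}_\T(Z)$ shows it is an isomorphism of $\Gamma$-modules. Let $\gamma\colon Z\to\Sigma X$ correspond to a generator of the socle $S\subseteq I$, and complete it to a triangle $X\xrightarrow{\ \alpha\ }Y\xrightarrow{\ \beta\ }Z\xrightarrow{\ \gamma\ }\Sigma X$. The socle generator is nonzero, so $\gamma\neq 0$; this is axiom~(a), and it also forces $\alpha$ not to be split monic and $\beta$ not to be split epic.

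The crux is showing that $\beta$ is right almost split. Given $h\colon Y'\to Z$ not split epic, the image of $h_* = \Hom_\T(Z,h)\colon \Hom_\T(Z,Y')\to\Hom_\T(Z,Z)=\Gamma$ is a right ideal of $\Gamma$, and it is proper precisely because $1_Z$ does not factor through $h$, hence $\im(h_*)\subseteq\mathfrak m$ since $\Gamma$ is local. Under $F(Z)\iso I$ the map $\gamma$ is a socle element, so it is annihilated by $\mathfrak m$; naturality of $F\iso\Hom_\T(-,\Sigma X)$ applied to $h$ identifies $\gamma\circ h\in\Hom_\T(Y',\Sigma X)=F(Y')$ with $F(h)(\gamma)$, which unwinds to the composite $\Hom_\T(Z,Y')\xrightarrow{h_*}\Gamma\to I$ of multiplication into $\gamma$, and this vanishes because $\im(h_*)\subseteq\mathfrak m$ kills $\gamma$. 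Therefore $\gamma\circ h=0$, so $h$ factors through $\beta$, which is axiom~(c).

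Axiom~(b), that $\alpha$ is left almost split, then follows by the standard symmetric argument in a triangulated category (alternatively, by running the dual construction with $\Hom_\T(-,Z)$), and uniqueness up to non-canonical isomorphism is the usual consequence of the fact that two right almost split maps into $Z$ have isomorphic sources, together with the non-canonical uniqueness of the cone. I expect the main obstacle to be the bookkeeping around $F$ — keeping the variances and the one-sided $\Gamma$-module structures straight so that the simple socle of $I$ translates into exactly the right-almost-split property — and, relatedly, confirming that the paper's representability hypothesis does apply to $F$, which it does precisely because $F$ turns coproducts into products.
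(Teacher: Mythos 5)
Your reconstruction of Krause's Brown-representability argument is on the right track — the dualizing functor $F = \Hom_\Gamma\bigl(\Hom_\T(Z,-),I\bigr)$, the representability to get $\Sigma X$, the identification $\Hom_\T(Z,\Sigma X)\iso I$, and the argument that a non-split-epi $h$ has $\im(h_*)\subseteq\mathfrak m$ and hence $\gamma h = 0$, are all correct and are exactly the key moves in Krause's proof. But there is a genuine gap at the point where you dispose of the left-almost-split condition with ``the standard symmetric argument.''

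That condition does \emph{not} follow from what you have established. What you proved is that $\gamma$ is \emph{right} almost zero: $\gamma h = 0$ for every non-split-epi $h\colon Y'\to Z$. Left-almost-splitness of $\alpha$ is the assertion that $\gamma$ is also \emph{left} almost zero, i.e.\ $(\Sigma\phi)\gamma = 0$ for every non-split-mono $\phi\colon X\to Y'$, and this is a statement about maps \emph{out of} $\Sigma X$, governed by $\sEnd(\Sigma X)$ rather than by $\Gamma=\sEnd(Z)$. The two are not symmetric: given only $\gamma\neq 0$, $\beta$ right almost split and $\sEnd(Z)$ local, the triangle need not be Auslander-Reiten. (Concretely, one can take an honest A-R triangle $X\to Y\to Z\to\Sigma X$, direct-sum the first map with $\mathrm{id}_W$ and the connecting map with $0\colon Z\to\Sigma W$; the resulting triangle $X\oplus W\to Y\oplus W\to Z\to\Sigma(X\oplus W)$ still has a nonzero connecting map and a right almost split middle map, but the first map is split monic.) Nor does ``running the dual construction with $\Hom_\T(-,Z)$'' help as stated: the covariant functor $\Hom_\Gamma\bigl(\Hom_\T(-,Z),I\bigr)$ does not take coproducts to products, so Brown representability does not apply on that side. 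What is actually needed, and what Krause proves, is that $\sEnd(X)\iso\sEnd(\Sigma X)$ is local. This follows from your own representability isomorphism: evaluating it at $\Sigma X$ gives $\sEnd(\Sigma X)\iso F(\Sigma X)=\Hom_\Gamma\bigl(\Hom_\T(Z,\Sigma X),I\bigr)\iso \Hom_\Gamma(I,I)=\mathrm{End}_\Gamma(I)$, which is local because $I$, being the injective envelope of a simple, is an indecomposable injective $\Gamma$-module. Once $\sEnd(X)$ is local, the standard lemma (``$\gamma\neq 0$, $\beta$ right almost split, and $\sEnd(X)$ local $\Rightarrow$ the triangle is Auslander-Reiten, equivalently $\beta$ is right minimal'') closes the argument, and uniqueness then follows as you say. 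So the missing ingredient is precisely the locality of $\sEnd(\Sigma X)$, which your setup already delivers but which you must extract explicitly rather than invoke symmetry.
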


\begin{rmk}
Let $\beta$ be the second map in the A-R triangle above.
One can show that, for any endomorphism $g$ of $Y$ with $\beta g=\beta$, the map $g$ is
an isomorphism (see~\cite{Krause}).
We say that the map $\beta$ is \dfn{right minimal} in this case.
Dually, the first map $\alpha$ in an A-R triangle is \dfn{left minimal}.
A map $\beta$ that is right almost split
sits inside an Auslander-Reiten triangle if and only if
it is right minimal~\cite{Krause}.
\end{rmk}

For convenience, we call the map $\gamma$ here the \dfn{almost zero map} 
with domain $Z$.  It is unique up to an automorphism of $\Sigma X$.
The following proposition follows from the definitions and
the earlier lemmas.

\begin{pro}\label{prop:length r-a-s}
Suppose that $(\cP,\cI)$ is a projective class on a triangulated category $\T$,
and that 
\[ X \xrightarrow{\alpha} Y \xrightarrow{\beta} Z \xrightarrow{\gamma} \Sigma X \]
is a distinguished triangle with $\beta$ right almost split.
If $Z$ has finite length $l$ and $X$ has finite length $k$
with respect to $(\cP,\cI)$, then
the third map $\gamma$ is in $\cI^{l-1}$, and
\begin{alignat*}{2}
   k-1 \leq \len_{\cP}(Y) &\leq k+1, &&\text{ if } k \geq l;\\
   l-1 \leq \len_{\cP}(Y) &\leq l, &&\text{ if } k \leq l-1. 
\end{alignat*}
For any summand $S$ of $Y$, $\len_{\cP}(S) \leq \max(k+1,l)$. 
\end{pro}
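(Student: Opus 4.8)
The plan is to first establish that $\gamma \in \cI^{l-1}$, then feed this into Lemmas~\ref{le:gel+ar} and~\ref{le:gl+ar} with $m = l-1$, split into the two cases $k \geq l$ and $k \leq l-1$ to simplify the resulting $\max$'s, and finally deduce the summand bound from the closure of $\cP_n$ under retracts.

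For the first and essential step, recall that $(\cP_{l-1}, \cI^{l-1})$ is a projective class, so to prove $\gamma \in \cI^{l-1}$ it suffices to show $\gamma \circ \psi = 0$ for every map $\psi : P \to Z$ with $P \in \cP_{l-1}$. (Since the zero map onto $0$ is split epic, a right almost split $\beta$ forces $Z \neq 0$, hence $l \geq 1$ and $\cI^{l-1}$ makes sense.) Given such a $\psi$, it cannot be split epic: otherwise $Z$ would be a retract of $P \in \cP_{l-1}$, hence in $\cP_{l-1}$, giving $\len_{\cP}(Z) \leq l-1$, contradicting $\len_{\cP}(Z) = l$. Since $\beta$ is right almost split, $\psi$ factors as $\psi = \beta \circ \tilde\psi$ for some $\tilde\psi : P \to Y$, whence $\gamma \circ \psi = \gamma \circ \beta \circ \tilde\psi = 0$ because $\gamma \circ \beta = 0$ in a triangle. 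This yields $\gamma \in \cI^{l-1}$.

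Now apply the two lemmas with $m = l-1 \leq l$. Lemma~\ref{le:gel+ar} gives $\len_{\cP}(Y) \leq \max(k-(l-1)+l,\, l) = \max(k+1,\, l)$, and Lemma~\ref{le:gl+ar} gives $\len_{\cP}(Y) \geq \max(k-l+(l-1),\, l-1) = \max(k-1,\, l-1)$. If $k \geq l$, then $k-1 \geq l-1$ and $k+1 > l$, so the bounds collapse to $k-1 \leq \len_{\cP}(Y) \leq k+1$. If $k \leq l-1$, then $k-1 \leq l-2 < l-1$ and $k+1 \leq l$, so they collapse to $l-1 \leq \len_{\cP}(Y) \leq l$. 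These are exactly the two displayed inequalities.

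Finally, in both cases $\len_{\cP}(Y) \leq \max(k+1, l)$; since each $\cP_n$ is closed under retracts, any summand $S$ of $Y$ lies in $\cP_{\len_{\cP}(Y)}$, so $\len_{\cP}(S) \leq \len_{\cP}(Y) \leq \max(k+1, l)$. The only step with real content is the first one: recognizing that the right-almost-split hypothesis is precisely what converts the statement ``$Z$ has length $l$'' into ``$\gamma$ is an $(l-1)$-fold composite of maps in $\cI$''; everything afterward is bookkeeping with the two lemmas and with retracts.
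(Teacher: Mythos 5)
Your proof is correct and follows essentially the same route as the paper: test $\gamma$ against objects of $\cP_{l-1}$, observe that the length constraint prevents split epis, use the right-almost-split property to factor through $\beta$, and then plug $m = l-1$ into Lemmas~\ref{le:gel+ar} and~\ref{le:gl+ar}. Your extra remarks — that $l \geq 1$ because $\beta$ is not split epic, and the explicit reason a map from $\cP_{l-1}$ to $Z$ cannot be split epic — are harmless elaborations of what the paper leaves implicit.
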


\begin{proof}
We test $\gamma$ on all objects $W$ in $\cP_{l-1}$. 
Because $Z$ has larger length than $W$,
a map $\phi:W \to Z$ cannot be split epic, so it factors through $\beta$.
Hence $\gamma \circ \phi$ is zero, which implies that $\gamma \in \cI^{l-1}$. 

The inequalities follow from Lemmas~\ref{le:gel+ar} and Lemma~\ref{le:gl+ar},
with $m = l-1$.  The statement about the summand $S$ follows immediately.
\end{proof}

Note in particular that for any A-R triangle, the almost zero map $\gamma$
is an example of a non-zero map in the largest possible power of the ideal,
for \emph{any} projective class.


In the case when $\T$ is $\StMod{kG}$ with $G$ being a $p$-group,
we know that ghosts and \emph{dual ghosts} coincide~\cite{Gh in rep}.
Hence $\gamma$ non-zero implies that $k \geq l$, and so we are in
the first case of Proposition~\ref{prop:length r-a-s}.

In the next section, we develop these ideas further.

\subsection{Auslander-Reiten triangles, irreducible maps and lengths}\label{ss:irr}


In this section, we focus on the category $\StMod{kG}$, and show that there
is a close relationship between lengths and irreducible maps.

The category $\StMod{kG}$ satisfies the hypotheses on $\T$ in Theorem~\ref{th:AR}, and
its compact objects are precisely the objects of $\stmod{kG}$.
For projective-free $M \in \stmod{kG}$, the stable endomorphism ring $\sEnd(M,M)$ 
being local is equivalent to $M$ being indecomposable.
In this case, the Auslander-Reiten triangle has the form~\cite[4.12.8]{Benson}
\[ \Omega^2M \xrightarrow{\alpha} H(M) \xrightarrow{\beta} M \xrightarrow{\gamma} \Omega M.\]
The module $H(M)$ is called the \dfn{heart} of $M$, 
and the triangle shows that it is also in $\stmod{kG}$.

The general theory we have set up in the last two sections applies to an \mbox{A-R} triangle
for any projective class $(\cP,\cI)$ on $\StMod{kG}$.
As a special case of Proposition~\ref{prop:length r-a-s}, using that
$k=l$ in this case, we get
\begin{cor}\label{cor:M and HM}
 Let $G$ be a finite group, let $k$ be a field whose characteristic divides the order of $G$, 
 and let $(\cP,\cI)$ be a projective class on $\StMod{kG}$.
 Consider the Auslander-Reiten triangle 
 $\Omega^2M \xrightarrow{\alpha} H(M) \xrightarrow{\beta} M \xrightarrow{\gamma} \Omega M$ 
 for some indecomposable non-projective module $M$ in $\stmod{kG}$
 with finite length $l$ with respect to $(\cP,\cI)$.
 Then
 \[
   \len_{\cP}(M)-1 \leq \len_{\cP}(H(M)) \leq \len_{\cP}(M)+1,
 \]
 and $\gamma$ is a non-trivial map in $\cI^{l-1}$.\qed
\end{cor}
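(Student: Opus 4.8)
The plan is to derive Corollary~\ref{cor:M and HM} as an immediate specialization of Proposition~\ref{prop:length r-a-s}. First I would recall that for an indecomposable non-projective module $M$ in $\stmod{kG}$, the stable endomorphism ring $\sEnd(M,M)$ is local, so by Theorem~\ref{th:AR} the Auslander-Reiten triangle $\Omega^2 M \xrightarrow{\alpha} H(M) \xrightarrow{\beta} M \xrightarrow{\gamma} \Omega M$ exists, with $\beta$ right almost split. The key observation is that, since $\Omega$ is an equivalence of the triangulated category $\StMod{kG}$, it preserves the length invariant with respect to any stable projective class $(\cP,\cI)$ — this is exactly where stability of the projective class is used. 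Hence $\len_{\cP}(\Omega^2 M) = \len_{\cP}(M) = l$, so in the notation of Proposition~\ref{prop:length r-a-s} we have $X = \Omega^2 M$ with $k = l$ and $Z = M$ with that same length $l$.

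Next I would simply invoke Proposition~\ref{prop:length r-a-s} with $k = l$. Since $k = l \geq l$, we are in the first case, which gives $l - 1 \leq \len_{\cP}(H(M)) \leq l+1$; rewriting $l$ as $\len_{\cP}(M)$ yields the claimed two-sided bound $\len_{\cP}(M) - 1 \leq \len_{\cP}(H(M)) \leq \len_{\cP}(M)+1$. The same proposition tells us $\gamma \in \cI^{l-1}$, and $\gamma \neq 0$ is built into the definition of an Auslander-Reiten triangle, so $\gamma$ is a non-trivial map in $\cI^{l-1}$. That completes everything asserted in the corollary.

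There is essentially no obstacle here beyond the bookkeeping point about $\Omega$: one must be sure that $\len_{\cP}$ is genuinely $\Omega$-invariant, which follows because a stable projective class has $\cP$ (equivalently $\cI$) closed under $\Omega$ and $\Omega^{-1}$, and the derived classes $(\cP_n, \cI^n)$ inherit this closure, so $X \in \cP_n$ iff $\Omega^{\pm 1} X \in \cP_n$. Given that, the corollary is a one-line deduction. I would also note in passing, as the excerpt does, that in the special case $\T = \StMod{kG}$ with $G$ a $p$-group the length bounds can be no sharper, since $\gamma$ non-zero forces exactly the $k = l$ situation rather than a degenerate one — but this remark is not needed for the proof of the corollary itself.

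\begin{proof}
Since $M$ is indecomposable and non-projective, $\sEnd(M,M)$ is local, so by Theorem~\ref{th:AR} the displayed Auslander-Reiten triangle exists and $\beta$ is right almost split. Because the projective class $(\cP,\cI)$ is stable, each derived class $\cP_n$ is closed under $\Omega$ and $\Omega^{-1}$, so $\len_{\cP}$ is invariant under $\Omega$; in particular $\len_{\cP}(\Omega^2 M) = \len_{\cP}(M) = l$. Applying Proposition~\ref{prop:length r-a-s} with $X = \Omega^2 M$ (length $k = l$) and $Z = M$ (length $l$), we are in the case $k \geq l$, which gives $l-1 \leq \len_{\cP}(H(M)) \leq l+1$, i.e.\ $\len_{\cP}(M)-1 \leq \len_{\cP}(H(M)) \leq \len_{\cP}(M)+1$, and also $\gamma \in \cI^{l-1}$. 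Finally $\gamma \neq 0$ by axiom~(a) of an Auslander-Reiten triangle, so $\gamma$ is a non-trivial map in $\cI^{l-1}$.
\end{proof}
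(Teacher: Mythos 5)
Your proof is correct and matches the paper's: the paper derives the corollary as "a special case of Proposition~\ref{prop:length r-a-s}, using that $k=l$ in this case," which is exactly the $\Omega$-invariance-of-length argument you spell out.
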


As above, we emphasize again that the same map $\gamma: M \to \Omega M$ provides a map
in $\cI^n$ with $n$ maximal for \emph{any} projective class $(\cP,\cI)$.
Put another way, $\gamma$ is in the intersection of all projective class ideals
that contain a non-trivial map from $M$.

\begin{rmk}
One might hope that the heart $H(M)$ always has larger generating length than $M$
when $\gel(M)$ is less than the generating number of $kG$, but
unfortunately this is not true in general.
For example, take $G= C_5 \times C_5$ and $M = k \up _{C_5}^G$.
One can compute that $\gel(M) = \gel(H(M)) = 5$,
while the generating number of $kG$ is at least $6$ (Theorem~\ref{th:ab}).
\end{rmk}


Let $S$ be an indecomposable non-projective summand of $H(M)$. Then,
clearly, $\len_{\cP}(S) \leq \len_{\cP}(H(M)) \leq \len_{\cP}(M)+1$.
We will show below that $\len_{\cP}(M)-1 \leq \len_{\cP}(S)$ because of
the right minimality of the map $\beta$.

We first need the notion of irreducible map.

\begin{de}
Let $G$ be a finite group, and let $k$ be a field whose characteristic divides the order of $G$.
A map $\lambda: M \to N$ in $\StMod{kG}$ is said to be \dfn{irreducible}
if it is not split monic or split epic, and
for any factorization $\lambda = \nu \circ \mu$,
either $\mu$ is split monic or $\nu$ is split epic.
\end{de}


Irreducible maps are closely related to Auslander-Reiten triangles:


\begin{pro}[Auslander and Reiten~\cite{A-R}]\label{prop:irr}
Let $M$ and $N$ be indecomposable non-projective modules in $\stmod{kG}$.
Then a map $f: M \to N$ is irreducible if and only if
the following equivalent conditions are satisfied:
\begin{enumerate}[(a)]
\item $M$ is a summand of $H(N)$ and $f$ is the composite
      $M \to H(N) \xrightarrow{\beta} N$.
\item $N$ is a summand of $\Omega^{-2}H(M)$ and $f$ is the composite      
      $M \xrightarrow{\Omega^{-2} \alpha} \Omega^{-2}H(M) \to N$. \qed
\end{enumerate}
\end{pro}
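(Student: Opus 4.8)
The plan is to reduce the whole proposition to the single equivalence ``$f$ is irreducible $\iff$ (a)'', from which (b) follows by a formal duality. Indeed, applying the equivalence $\Omega^{-2}$ to the A-R triangle $\Omega^2 M \xrightarrow{\alpha} H(M) \xrightarrow{\beta} M \xrightarrow{\gamma} \Omega M$ produces a distinguished triangle $M \xrightarrow{\Omega^{-2}\alpha} \Omega^{-2}H(M) \xrightarrow{\Omega^{-2}\beta} \Omega^{-2}M \xrightarrow{\Omega^{-2}\gamma} \Omega^{-1}M$ whose first map $\Omega^{-2}\alpha$ is left almost split and nonzero last map; this is the ``A-R triangle starting at $M$''. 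Running the argument below verbatim with ``left almost split'' in place of ``right almost split'' (equivalently, passing to the opposite category) will give ``$f$ is irreducible $\iff$ (b)'', and then (a) $\iff$ (b) because both are equivalent to irreducibility. So I would only treat (a).

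For (a) $\Rightarrow$ irreducible, I would write $H(N) = M \oplus M'$ with inclusion $\iota\colon M \to H(N)$ and projection $\pi\colon H(N) \to M$, so that $f = \beta\circ\iota$. First I would check $f$ is neither split monic nor split epic: a section of $f$ would produce a section of $\beta$; and if $f$ were split monic then, $M$ and $N$ being indecomposable, $f$ would be an isomorphism, again yielding a section $\iota f^{-1}$ of $\beta$ --- either case contradicts $\beta$ being right almost split. Then, given a factorization $f = \nu\circ\mu$ with $\nu$ not split epic, I would use the right almost split property of $\beta$ to write $\nu = \beta\circ\nu'$, observe that $\beta\circ(\nu'\mu - \iota) = 0$ so that $\nu'\mu - \iota = \alpha\circ h$ for some $h\colon M \to \Omega^2 N$, and compose with $\pi$ to get $\pi\nu'\mu = \mathrm{id}_M + \pi\alpha h$. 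The point will be that $\pi\alpha h$ is a non-unit of $\sEnd(M)$: otherwise $\pi\alpha\colon \Omega^2 N \to M$ would be split epic, hence an isomorphism since $\Omega^2 N$ is indecomposable, forcing $\alpha$ to be split monic, which is impossible. Since $\sEnd(M)$ is local, $\pi\alpha h$ lies in its radical, so $\pi\nu'\mu = \mathrm{id}_M + \pi\alpha h$ is invertible and $\mu$ is split monic, as required.

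For irreducible $\Rightarrow$ (a), since $f$ is irreducible it is not split epic, so the right almost split property of $\beta$ gives a factorization $f = \beta\circ g$ with $g\colon M \to H(N)$. Applying the defining property of irreducibility of $f$ to this very factorization, and using that $\beta$ is not split epic, forces $g$ to be split monic; as $M$ is indecomposable this realises $M$ as a direct summand of $H(N)$, and $f = \beta g$ is then the composite asserted in (a).

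The main obstacle is the error-term analysis in the second paragraph, namely showing that $\pi\alpha h \in \rad\sEnd(M)$. This is where I expect to really need the Auslander-Reiten machinery: the locality of $\sEnd(M)$ (equivalent to indecomposability of $M$, as recalled in Section~\ref{ss:irr}), the fact that the first map $\alpha$ of an A-R triangle is left almost split hence not split monic, and the indecomposability of $\Omega^2 N$. Everything else --- verifying that the relevant maps are not split mono/epi, the factorizations handed to us by the almost-split properties, and the symmetric treatment of (b) --- is formal triangle manipulation requiring no new ideas.
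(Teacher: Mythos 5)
The paper states this proposition with a \texttt{\textbackslash qed} and cites Auslander--Reiten~\cite{A-R} rather than supplying a proof, so there is no in-text argument to compare against; your write-up is a correct reconstruction of the classical Auslander--Reiten argument transported to the triangulated setting. The two places where real content is needed are handled properly: the error term $\pi\alpha h$ is killed using the locality of $\sEnd(M)$, the indecomposability of $\Omega^2 N$, and the fact that $\alpha$, being left almost split, is not split monic; and the reduction of (b) to (a) via $\Omega^{-2}$ is legitimate because shifting an A-R triangle produces an A-R triangle and the definition of irreducibility is self-dual. One tiny streamlining: to rule out $f = \beta\iota$ being split monic, you could simply note that a split monic between indecomposables is an isomorphism and an isomorphism is excluded from the definition of irreducible, though your route through a section of $\beta$ works equally well.
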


Combining Corollary~\ref{cor:M and HM} and Proposition~\ref{prop:irr},
one can prove

\begin{cor}\label{cor:length irr}
Let $f: M \to N$ be an irreducible map with
$M$ and $N$ non-projective indecomposables in $\stmod{kG}$, and
let $(\cP,\cI)$ be a projective class on $\StMod{kG}$.
If $M$ and $N$ have finite lengths with respect to $(\cP,\cI)$,
then
 \[
   \len_{\cP}(M)-1 \leq \len_{\cP}(N) \leq \len_{\cP}(M)+1 .
 \]
In particular, for $M$ indecomposable and
$S$ any summand of $H(M)$, we have
\[
\len_{\cP}(M)-1 \leq \len_{\cP}(S) \leq \len_{\cP}(M)+1 . \xqedhere{130pt}
\]
\end{cor}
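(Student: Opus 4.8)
The plan is to reduce the general statement about an irreducible map $f\colon M\to N$ to the already-established Corollary~\ref{cor:M and HM}, via the characterization of irreducible maps in Proposition~\ref{prop:irr}. First I would observe that it suffices to prove the single-sided inequality $\len_{\cP}(N)\leq\len_{\cP}(M)+1$ for every irreducible map between non-projective indecomposables; the reverse inequality $\len_{\cP}(M)-1\leq\len_{\cP}(N)$ then follows by symmetry, because if $f\colon M\to N$ is irreducible, so is its behaviour under the duality between conditions (a) and (b) of Proposition~\ref{prop:irr}. (Concretely, swapping the roles via (b), an irreducible map $M\to N$ exhibits $N$ as a summand of $\Omega^{-2}H(M)\cong H(\Omega^{-2}M)$, and since $\len_{\cP}$ is invariant under the shift $\Omega^{\pm 1}$ by stability of the projective class, we may apply whatever bound we prove to this situation with $M$ replaced by $\Omega^{-2}M$ and $N$ in the role of the summand.)

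Next I would prove the key inequality $\len_{\cP}(S)\leq\len_{\cP}(M)+1$ for any summand $S$ of $H(M)$, where $M$ is a non-projective indecomposable. This is immediate: $\len_{\cP}(S)\leq\len_{\cP}(H(M))$ since $\cP_n$ is closed under retracts, and $\len_{\cP}(H(M))\leq\len_{\cP}(M)+1$ by Corollary~\ref{cor:M and HM}. By Proposition~\ref{prop:irr}(a), an irreducible map $f\colon M\to N$ with $N$ indecomposable non-projective factors as $M\to H(N)\xrightarrow{\beta}N$, so $M$ is a summand of $H(N)$; applying the summand bound with $N$ in place of $M$ gives $\len_{\cP}(M)\leq\len_{\cP}(N)+1$, i.e., $\len_{\cP}(N)\geq\len_{\cP}(M)-1$. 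Symmetrically, using Proposition~\ref{prop:irr}(b), $N$ is a summand of $\Omega^{-2}H(M)$, so $\len_{\cP}(N)\leq\len_{\cP}(\Omega^{-2}H(M))=\len_{\cP}(H(M))\leq\len_{\cP}(M)+1$, using stability of $(\cP,\cI)$ for the first equality. This establishes both halves of the first displayed inequality.

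Finally, the "in particular" clause: for $M$ indecomposable and $S$ a summand of $H(M)$, the upper bound $\len_{\cP}(S)\leq\len_{\cP}(M)+1$ is the summand observation just made. For the lower bound I would split into cases according to whether $S$ is projective. If $S$ is projective, then $\len_{\cP}(S)=0$, but one checks this case does not arise for a genuine nonzero summand of the heart of a non-projective indecomposable when $M$ itself has positive length—more safely, I would simply restrict attention to the non-projective summands of $H(M)$ (the projective summands being zero in $\StMod{kG}$ and hence irrelevant), and for such $S$ the composite $S\to H(M)\xrightarrow{\beta}M$ is irreducible by Proposition~\ref{prop:irr}(a), so the already-proved inequality $\len_{\cP}(M)-1\leq\len_{\cP}(S)$ applies directly.

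The main obstacle I anticipate is bookkeeping rather than conceptual: making sure the stability hypothesis on $(\cP,\cI)$ is invoked exactly where the shift $\Omega^{\pm 2}$ appears (so that $\len_{\cP}(\Omega^{-2}H(M))=\len_{\cP}(H(M))$), and handling the degenerate possibility that a summand of the heart is projective (zero in the stable category) so that the lower bound is only claimed for the summands that actually contribute. Neither of these is serious; the real content is entirely carried by Corollary~\ref{cor:M and HM} together with the two factorizations in Proposition~\ref{prop:irr}.
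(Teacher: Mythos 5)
Your proof is correct and follows exactly the route the paper intends: the paper gives no explicit proof (it merely says to combine Corollary~\ref{cor:M and HM} and Proposition~\ref{prop:irr}), and your argument supplies precisely that combination, using~\ref{prop:irr}(a) for the lower bound, \ref{prop:irr}(b) together with stability for the upper bound, and then specializing back to summands of $H(M)$ via~\ref{prop:irr}(a) once more. The only hair to split, which you already flag, is that for the lower bound on a summand $S$ of $H(M)$ one implicitly reduces to an indecomposable non-projective summand (as the paper's surrounding text indicates); the decomposable case then follows because $\len_{\cP}$ of a direct sum is the maximum of the lengths of the pieces.
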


\subsection{Ghost lengths}\label{ss:gl}


The results of Sections~\ref{ss:length and tri} to~\ref{ss:irr} apply to the generating
length of a module in $\StMod{kG}$, since generating length is 
the length with respect to the ghost projective class.
When $kG$ has periodic cohomology, there is a projective class on 
$\thick{k}$ whose ideal is $\cG_t$, and ghost length is the length
with respect to this projective class.
In general, we don't know whether ghost length is a length with
respect to a projective class,
but we can still prove the analogue of half of Corollary~\ref{cor:M and HM}:

\begin{pro}
 Let $G$ be a finite group, and let $k$ be a field whose characteristic divides the order of $G$.
 Consider the Auslander-Reiten triangle 
 $\Omega^2M \to H(M) \to M \to \Omega M$ 
 for some indecomposable module $M$ in $\thick k$. 
 Then the following holds:
 \[
   \gl(M) - 1 \leq \gl(H(M))
 \]
\end{pro}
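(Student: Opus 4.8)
The plan is to adapt the lower-bound argument behind Corollary~\ref{cor:M and HM} (that is, Lemma~\ref{le:gl+ar}) to the ghost length. That argument uses both that the connecting map lies in a power $\cI^m$ of the ideal and that $(\cP,\cI)$ is a projective class, neither of which is available for $\cG_t$ on $\thick k$; so instead I would argue directly with composites of ghosts, letting the left-almost-split property of the first map in the Auslander--Reiten triangle play the role that $\gamma\in\cI^m$ plays in Lemma~\ref{le:gl+ar}. We may assume $M\neq0$ (otherwise the inequality is vacuous), so $M$ is a non-projective indecomposable with Auslander--Reiten triangle $\Omega^2M\xrightarrow{\alpha}H(M)\xrightarrow{\beta}M\xrightarrow{\gamma}\Omega M$, where $\alpha$ is left almost split. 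Since $\Omega$ is an autoequivalence of $\thick k$ that preserves the ideal $\cG_t$ (using $[\Omega^i k,\Omega^{\pm1}X]\iso[\Omega^{i\mp1}k,X]$), it preserves each power $(\cG_t)^j$ and hence the ghost length, so $\gl(M)=\gl(\Omega^2M)$; writing $n:=\gl(H(M))$, which is finite because $H(M)\in\thick k$, it therefore suffices to show that every map in $(\cG_t)^{n+1}$ with domain $\Omega^2M$ is trivial.

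So let $h$ be such a map, and write $h=g\circ\phi$, where $\phi\colon\Omega^2M\to W$ is the first of the $n+1$ ghosts and $g\colon W\to V$ is the composite of the remaining $n$, so $g\in(\cG_t)^n$. The key point is that $\phi$ is not split monic: a retraction $r$ of $\phi$ would give $\mathrm{id}_{\Omega^2M}=r\circ\phi$, a map in the ideal $\cG_t$, forcing $\hat H^*(G,\Omega^2M)=0$ and hence, since $\Omega^2M\in\thick k$, $\Omega^2M\cong0$, contradicting $M\neq0$. As $\alpha$ is left almost split, $\phi$ thus factors as $\phi=\tilde\phi\circ\alpha$ for some $\tilde\phi\colon H(M)\to W$. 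Then $g\circ\tilde\phi$ lies in $(\cG_t)^n$ (this is an ideal, hence closed under precomposition) and has domain $H(M)$, so $g\circ\tilde\phi=0$ by the definition of $n=\gl(H(M))$. Hence $h=g\circ\phi=(g\circ\tilde\phi)\circ\alpha=0$, which gives $\gl(M)=\gl(\Omega^2M)\leq n+1=\gl(H(M))+1$.

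I do not expect a serious obstacle here; the points needing a little care are the bookkeeping that each $(\cG_t)^j$ is an ideal preserved by the autoequivalence $\Omega$, and the appeal to the fact recalled in Section~\ref{ss:GH} that an object of $\thick k$ with vanishing Tate cohomology is zero, which is precisely what makes ``a ghost out of $\Omega^2M$ is not split monic'' work. The failure of the reverse inequality $\gl(H(M))\leq\gl(M)+1$ by this method is genuine: the corresponding proof (Lemma~\ref{le:gel+ar}) must identify the induced map $\tilde\phi$ as lying in a power of the ideal, and for $\cG_t$ no such characterization is available.
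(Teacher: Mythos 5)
Your proof is correct and follows essentially the same route as the paper's: pass to $\Omega^2 M$, peel off one ghost $\phi$ from a composite of $\gl(H(M))+1$ ghosts, show $\phi$ factors through $H(M)$, and apply the definition of $\gl(H(M))$. The only variation is in justifying that factorization: you use directly that a ghost out of $\Omega^2M\neq 0$ in $\thick k$ is not split monic and hence factors through the left-almost-split map $\alpha$, whereas the paper instead invokes Proposition~\ref{prop:length r-a-s} to kill $\phi\circ\Omega\gamma$; these amount to the same observation, and your version is, if anything, slightly more self-contained.
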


\begin{proof}
 We mimic the proof of Lemma~\ref{le:gl+ar}.
 Suppose that $\gl(H(M)) = l-1$.  We must prove that $\gl(M) \leq l$.
 Since $\gl(M) = \gl(\Omega^2 M)$, it suffices to show
 that any map $\phi : \Omega^2 M \to N$ in $(\cG_t)^l$ is stably trivial,
 where $\cG_t$ consists of ghosts between objects in $\thick{k}$.
 Write $\phi$ as $\phi_2 \phi_1$, where $\phi_1$ is in $\cG_t$
 and $\phi_2$ is in $(\cG_t)^{l-1}$.
 Then, by Proposition~\ref{prop:length r-a-s}, the composite
 $\phi_1 \, \Omega \gamma$ is stably trivial, so $\phi_1$ factors through $H(M)$:
\[
\xymatrix{
 \Omega M \ar[r]^{\Omega \gamma} & \Omega^2 M \ar[r] \ar[d]^{\phi_1} & H(M) \ar[r] \ar@{-->}[dl]^{\psi} & M \ar[r]^{\gamma} & \Omega M \\
                                 & W \ar[d]^{\phi_2} \\
                                 & N .
}
\]
Now since $\gl(H(M)) = l-1$, the composite $\phi_2 \psi$ is stably trivial 
and so $\phi$ is stably trivial as well.
\end{proof}

The analogue of the other half of Corollary~\ref{cor:M and HM} would say
that $\gl(H(M)) \leq \gl(M) + 1$, and we don't know whether this is true.

\begin{rmk}\label{re:gl=gel}
A related question is whether the generating length and ghost length
always agree.  We know of no counterexamples.
However, Corollary~\ref{cor:M and HM} implies that the longest composite
of ghosts starting from a given module $M$ in $\thick k$ can always be
attained by a map in $(\cG^m)_t$, the intersection of $\cG^m$ and $\thick k$. 
Thus if $(\cG_t)^m=(\cG^m)_t$, then the ghost length and generating length agree.
Note that a related statement for the objects of $\cP$, i.e., 
that $(\cP^c)_n = (\cP_n)^c$, where the superscript $c$ means to take
the intersection with the compact objects,
is known to be true~\cite[2.2.4]{BoVdB}.
\end{rmk}

\section{Ghost numbers of $p$-groups}\label{se:pgroups}

In this section we study finite $p$-groups, using the fact that $\thick k = \stmod{kG}$.
We begin in Section~\ref{ss:bg} by recalling several results that we will use.  
In Section~\ref{ss:gen-and-socle-length} we show that the generating length invariant
is a stabilized version of the socle length, and give a result that shows that
if these are equal for a module $M$, the same is true for $\rad(M)$ and $M/\soc(M)$.
Then we give new computations of bounds on ghost numbers for various $p$-groups:
abelian $p$-groups in Section~\ref{ss:gn-abelian},
the quaternion group $Q_8$ in Section~\ref{ss:gn-Q8},
dihedral $2$-groups in Section~\ref{ss:gn-D4q},
and the groups $C_{p^r} \times C_{p^s}$ in Section~\ref{ss:gn-C3xC3}.
In several cases we determine the ghost number completely, such as for
$D_{4q}$, $C_3 \times C_{3^s}$ and $C_4 \times C_{2^s}$.
In Section~\ref{ss:cyclic sub}, we compute the ghost length and generating length
of certain modules induced up from a cyclic normal subgroup.  
This is used in the same section to show that the ghost number and the 
radical length are within a factor of three of each other for any $p$-group.
It is also used in Section~\ref{ss:gn-D4q} in the computation of the
ghost number of $kD_{4q}$ and in Section~\ref{ss:possible}, where we
classify group algebras with small ghost number and put constraints
on which ghost numbers can occur.

When we write ``$p$-group'', we always mean ``finite $p$-group''.

\subsection{Background}\label{ss:bg}

We recall the following theorem, 
and then explain the terminology and give an idea of the proof.

\begin{thm}[Chebolu, Christensen and Min\'a\v{c}~\cite{Gh in rep}]\label{th:finite gl}
Let $G$ be a $p$-group, and let $k$ be a field of characteristic $p$. 
Then the generating length of a $kG$-module $M$ is at most its radical length,
and the following inequalities hold:
\[
\text{ghost number of } kG \leq \text{generating number of } kG 
                              <      \text{nilpotency index of } J(kG) \leq |G| .
\]
In particular, the ghost number of $kG$ is finite in this case.\qed
\end{thm}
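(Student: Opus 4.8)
The plan is to prove the three linked inequalities by working from the right. The rightmost inequality, that the nilpotency index of $J(kG)$ is at most $|G|$, is standard and follows from the fact that the radical filtration $kG \supseteq J \supseteq J^2 \supseteq \cdots$ is strictly decreasing until it reaches zero, so it has at most $\dim_k kG = |G|$ steps; I would simply cite this. The real content is the claim that the generating length of any $kG$-module $M$ is at most its radical length $\ell$ (the nilpotency index of the $J(kG)$-action on $M$), since once we know $\gel(M) \leq \rl(M) \leq \text{nilpotency index of } J(kG) - 1 < |G|$ for all $M$, the generating number is bounded by (and in fact, by taking $M = kG/J$ or $M = k$ together with the strictness remark, strictly less than) the nilpotency index, and the already-established inequality $\text{ghost number} \leq \text{generating number}$ from the earlier discussion finishes the chain.

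So the heart of the argument is: if $J^\ell M = 0$, then $\gel(M) \leq \ell$. First I would set up an induction on $\ell$. The base case $\ell = 0$ gives $M = 0$, which has generating length $0$. For the inductive step, consider the short exact sequence of $kG$-modules $0 \to JM \to M \to M/JM \to 0$. The quotient $M/JM$ is semisimple, hence (for a $p$-group, where $k$ is the only simple) a direct sum of copies of $k$, so it lies in $\cF$ and has generating length at most $1$. The submodule $JM$ satisfies $J^{\ell-1}(JM) = J^\ell M = 0$, so by the inductive hypothesis $\gel(JM) \leq \ell - 1$. The short exact sequence gives a triangle in $\StMod{kG}$, and the subadditivity of generating length under triangles — which is exactly the fact $n \leq k + l$ for the outer two lengths $k, l$ in a triangle, recalled at the start of Section~\ref{ss:length and tri} — yields $\gel(M) \leq \gel(JM) + \gel(M/JM) \leq (\ell - 1) + 1 = \ell$.

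The one subtlety I would be careful about, and which I expect to be the main (minor) obstacle, is the direction of the triangle: a short exact sequence $0 \to A \to B \to C \to 0$ in $\modu{kG}$ induces a triangle $A \to B \to C \to \Sigma A$, so $B$ sits as the \emph{middle} term, and subadditivity applies directly with $k = \gel(A)$, $l = \gel(C)$, $n = \gel(B)$. I should also note that generating length is well-defined (finite) here precisely because $\thick k = \stmod{kG}$ for a $p$-group, so no existence issue arises. Finally, to get the \emph{strict} inequality $\text{generating number} < \text{nilpotency index}$ rather than just $\leq$, I would observe that the bound $\gel(M) \leq \rl(M)$ together with $\rl(M) < \text{nilpotency index of } J(kG)$ for \emph{every} module $M$ (since the action of $J(kG)$ on any $M$ is killed one step earlier than, or at the same step as, its action on $kG$ itself — more precisely $\rl(M) \leq \text{nilpotency index of } J(kG) - 1$ by applying the definition to $M = kG$, which is the largest case) already delivers the strict inequality, so no separate argument is needed. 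The remaining clause — that the ghost number is finite — is then immediate.
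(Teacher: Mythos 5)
Your argument for $\gel(M) \leq \rl M$ is essentially the paper's, made a bit more explicit: both filter $M$ by powers of $J(kG)$, observe that each successive quotient is a sum of copies of $k$ because $G$ is a $p$-group, and conclude by subadditivity of length in a triangle. That part is correct, as is the reduction of the rightmost inequality to $\rl kG \leq |G|$.

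The gap is in the strictness claim, generating number $<$ nilpotency index. You assert $\rl M \leq (\text{nilpotency index of } J(kG)) - 1$ for \emph{every} module $M$ and justify this ``by applying the definition to $M = kG$, which is the largest case'' --- but that computation yields $\rl kG = \text{nilpotency index of } J(kG)$ exactly, not one less, so your premise is false precisely for the extremal case $M = kG$. The bound $\gel(M) \leq \rl M$ by itself therefore only gives generating number $\leq$ nilpotency index.

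The missing ingredient is that generating length is an invariant of the stable isomorphism class, so one may replace $M$ by its projective-free part, and a \emph{projective-free} $kG$-module necessarily has strictly smaller radical length than $kG$. (Sketch of the latter: if $\rl M$ equals the nilpotency index $m$, pick $v \in M$ with $J^{m-1}v \neq 0$; the induced map $kG \to M$, $\xi \mapsto \xi v$, is injective because $kG$ is local and self-injective with $\soc(kG) = \rad^{m-1}(kG)$, so any nonzero kernel would force $J^{m-1}v = 0$; self-injectivity then splits the embedding, exhibiting $kG$ as a summand of $M$.) The paper's proof invokes exactly this projective-free fact and closes with it; your proposal replaces it with an inequality that does not hold as stated.
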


Let $G$ be any finite group, and let $k$ be a field whose characteristic divides the order of $G$.
Let $J=J(kG)$ be the Jacobson radical of $kG$, i.e., the largest nilpotent ideal of $kG$. 
The nilpotency index of $J(kG)$ is the smallest integer $m$ such that $J^m=0$, 
and for any module $M$, we have a radical series
\[ M=\rad^0(M) \supseteq \rad^1(M) \supseteq \rad^2(M) \supseteq \cdots \supseteq 0, \]
with $\rad^n(M)=J^n M $, and a socle series
\[ 0=\soc^0(M) \subseteq \soc^1(M) \subseteq \soc^2(M) \subseteq \cdots \subseteq M, \]
with $\soc^n(M)$ consisting of the elements of $M$ annihilated by $J^n$. 
The radical length of $M$ is the smallest integer $n$ such that $\rad^n(M)=0$. 
This is equal to the socle length of $M$,
the smallest integer $m$ such that $\soc^m(M)=M$. 
The successive quotients in the sequences are direct sums of simple modules. 

If $G$ is a $p$-group, then each quotient is a direct sum of $k$'s,
so the generating length of a module $M$ is less than or equal to its radical length.
Note that the nilpotency index of $J(kG)$ is exactly the radical length of $kG$,
and if $M$ is a projective-free $kG$-module, it always has smaller radical length than $kG$.
The theorem then follows.

The following lemma is proved by studying Tate cohomology in degrees $0$ and $-1$. 
We write $\rad(M)$ for $\rad^1(M)$ and $\soc(M)$ for $\soc^1(M)$.

\begin{lem}[Chebolu, Christensen and Min\'{a}\v{c}~\cite{Gh in rep}]\label{le:soc}
Let $G$ be a $p$-group, and let $k$ be a field of characteristic $p$. 
Let $f: M \to N$ be a map in $\Mod{kG}$ between projective-free modules $M$ and $N$. 
Then:
\begin{enumerate}[(a)]
\item $\soc(M)\subseteq \ker(f)$ iff $[k, f] = 0$.
\item $\im(f) \subseteq \rad(N)$ iff $[\Omega^{-1} k, f] = 0$.
\end{enumerate}
In particular, if $f$ represents a ghost in the stable category, then
both inclusions hold.\qed
\end{lem}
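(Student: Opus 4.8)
The plan is to invoke the representability of Tate cohomology, $[\Omega^n k, M] \iso \hat{H}^n(G, M)$, to rewrite the conditions $[k, f] = 0$ and $[\Omega^{-1}k, f] = 0$ as the statements that $f$ induces the zero map on $\hat{H}^0(G, -)$ and on $\hat{H}^{-1}(G, -)$ respectively, and then to identify these functors explicitly on projective-free modules. Write $N = \sum_{g \in G} g \in kG$ for the norm element; since $G$ is a $p$-group, $k$ is the only simple $kG$-module, so $\soc M = M^G$ for every module $M$. The crucial input is that a $kG$-module $M$ is projective-free if and only if $NM = 0$: if $Nm \neq 0$, then $a \mapsto am$ is a monomorphism $kG \to M$, since its kernel meets the one-dimensional essential socle $kN$ of $kG$ trivially, and because $kG$ is injective as a module over itself this splits off a free summand of $M$; conversely a free summand $kG$ of $M$ contributes $kN \neq 0$ to $NM$.

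For part~(a), recall $\hat{H}^0(G, M) = M^G/NM$, which equals $\soc M$ when $M$ is projective-free. Concretely, any map $k \to M$ that factors through a projective factors through the inclusion $k \to kG$ (by injectivity of the projective), hence has image in $NM = 0$; so $\PHom(k, M) = 0$ and $[k, M] = \Hom_{kG}(k, M) = \soc M$ on the nose, with $[k, f]$ the restriction $f|_{\soc M} \colon \soc M \to \soc N$ (using that $N$ is projective-free so the codomain really is $\soc N$). Hence $[k, f] = 0$ if and only if $f(\soc M) = 0$, i.e.\ $\soc M \subseteq \ker f$.

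Part~(b) is dual. Using $[\Omega^{-1}k, M] \iso \hat{H}^{-1}(G, M)$ and the standard low-degree description $\hat{H}^{-1}(G, M) = \ker\big(\mathrm{Nm} \colon M/JM \to M^G\big)$, where $\mathrm{Nm}$ is multiplication by $N$, the vanishing $NM = 0$ forces $\mathrm{Nm} = 0$, so $\hat{H}^{-1}(G, M) = M/JM = M/\rad M$ naturally in $M$, and $[\Omega^{-1}k, f]$ becomes the induced map $M/\rad M \to N/\rad N$; this is zero exactly when $f(M) \subseteq \rad N$, that is, $\im f \subseteq \rad N$. (Alternatively one can derive (b) from (a) via the shift identity $[\Omega^{-1}k, f] = [k, \Omega f]$, together with $\soc(\Omega M) = \soc(P_M) = N P_M$ and $\Omega M \subseteq \rad P_M$ for a projective cover $P_M \twoheadrightarrow M$ with $M$ projective-free.) The final ``in particular'' is then immediate: a ghost is by definition killed by $\hat{H}^*(G, -)$ in every degree, in particular in degrees $0$ and $-1$, so both $[k, f] = 0$ and $[\Omega^{-1}k, f] = 0$, and (a) and (b) yield the two inclusions. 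The only real content is establishing the two natural identifications $\hat{H}^0(G, -) \iso \soc(-)$ and $\hat{H}^{-1}(G, -) \iso (-)/\rad(-)$ on projective-free modules, for which the fact that $NM = 0$ is the crux; everything else is the low-degree bookkeeping of Tate cohomology.
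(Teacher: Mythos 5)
Your proof is correct and takes exactly the route the paper itself points to --- studying Tate cohomology in degrees $0$ and $-1$ via the identifications $[k,M] \iso \soc M$ and $[\Omega^{-1}k,M] \iso M/\rad M$ for projective-free $M$, with the observation that $M$ is projective-free iff the norm element annihilates $M$ doing the real work. This matches the argument of the cited reference~\cite{Gh in rep}.
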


As a corollary, we get

\begin{cor}[Chebolu, Christensen and Min\'{a}\v{c}~\cite{Gh in rep}]\label{co:soc}
Let $G$ be a $p$-group, and let $k$ be a field of characteristic $p$. 
Let $f: M \to N$ be a map in $\Mod{kG}$ between projective-free modules $M$ and $N$. 
If $f$ is an $l$-fold ghost, then:
\begin{enumerate}[(a)]
\item $\soc^l(M)\subseteq \ker(f)$.
\item $\im(f) \subseteq \rad^l(N)$.\qed
\end{enumerate}
\end{cor}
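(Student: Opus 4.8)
The plan is to prove both statements by induction on $l$, using Lemma~\ref{le:soc} as the base case $l = 1$. The only subtlety is that the intermediate module produced by factoring an $l$-fold ghost need not be projective-free, so I would first reduce to that case. Suppose $f : M \to N$ is an $l$-fold ghost between projective-free modules, written as $f = g \circ h$ where $h : M \to L$ is a ghost and $g : L \to N$ is an $(l-1)$-fold ghost. Replacing $L$ by its projective-free summand $L'$ changes $h$ and $g$ only by composing with a stable isomorphism $L \iso L'$ on each side (plus a map factoring through a projective, which contributes nothing to $\ker$ and $\im$ after passing to the stable category), so without loss of generality $L$ is projective-free; and $h$, $g$ remain a ghost and an $(l-1)$-fold ghost respectively. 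Here one should be a little careful that $h$ and $g$ are honest module maps representing the stable factorization — the stable category is a quotient, so the factorization $f = gh$ in $\StMod{kG}$ lifts to $f = gh + (\text{map through projective})$ in $\Mod{kG}$, and since $M$ and $N$ are projective-free one can absorb the correction term; this bookkeeping is the one place where care is needed.

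For part~(a): by Lemma~\ref{le:soc}(a) applied to the ghost $h : M \to L$, we have $\soc(M) \subseteq \ker(h)$, i.e. $h(\soc(M)) = 0$. More generally I claim $h(\soc^l(M)) \subseteq \soc^{l-1}(L)$. Indeed, $J^{l-1} h(\soc^l(M)) = h(J^{l-1}\soc^l(M))$, and $J^{l-1}\soc^l(M) \subseteq \soc^1(M) = \soc(M)$ (since $J^{l-1}$ carries $\soc^l$ into $\soc^1$ by definition of the socle series), so this lands in $h(\soc(M)) = 0$; hence $h(\soc^l(M))$ is annihilated by $J^{l-1}$, i.e. contained in $\soc^{l-1}(L)$. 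By the inductive hypothesis applied to the $(l-1)$-fold ghost $g : L \to N$, we have $\soc^{l-1}(L) \subseteq \ker(g)$. Combining, $f(\soc^l(M)) = g(h(\soc^l(M))) \subseteq g(\soc^{l-1}(L)) = 0$, which is the desired inclusion $\soc^l(M) \subseteq \ker(f)$.

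Part~(b) is the dual argument, using radical series in place of socle series. By Lemma~\ref{le:soc}(b) applied to the ghost $g : L \to N$, $\im(g) \subseteq \rad(N)$. By the inductive hypothesis applied to the $(l-1)$-fold ghost $h : M \to L$, $\im(h) \subseteq \rad^{l-1}(L) = J^{l-1}L$. Then $\im(f) = g(\im h) \subseteq g(J^{l-1}L) = J^{l-1}g(L) = J^{l-1}\im(g) \subseteq J^{l-1}\rad(N) = \rad^l(N)$, using that $g$ is $kG$-linear to pull $J^{l-1}$ out. This closes the induction. I expect the main obstacle to be purely the lifting/projective-free reduction at the start — the socle- and radical-series manipulations are routine once one knows the intermediate factor can be taken projective-free and the factorization lifted to module maps — so I would state that reduction carefully and then the rest follows mechanically by induction.
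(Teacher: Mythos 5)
Your inductive step is correct and is essentially what one expects from the cited reference: from $\soc(M) \subseteq \ker(h)$ you deduce $h(\soc^l(M)) \subseteq \soc^{l-1}(L)$ and apply the inductive hypothesis to $g$, and dually for radicals. The problem is the reduction at the start, and it is not merely bookkeeping.

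You lift the stable factorization $[f] = [g][h]$ to $\Mod{kG}$ as $f = gh + \psi$ with $\psi$ factoring through a projective, and you assert that $\psi$ can be absorbed and that replacing $L$ by its projective-free summand only changes things by maps through a projective, ``which contribute nothing to $\ker$ and $\im$ after passing to the stable category.'' But the corollary is about module-level kernel and image, and for $l \geq 2$ the conditions $\soc^l(M) \subseteq \ker(f)$ and $\im(f) \subseteq \rad^l(N)$ are \emph{not} invariant under changing $f$ by a map through a projective. Concretely, take $G = C_3$, $k$ of characteristic $3$, $M = N = kC_3/J^2$ (projective-free, $2$-dimensional), and let $\psi$ be multiplication by $g-1$. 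Then $\psi$ factors through $kC_3$ (via $\bar z \mapsto (g-1)z$ followed by the projection $kC_3 \to M$), so $[\psi] = 0 \in \cG^l$ for every $l$; yet $\soc^2(M) = M$ while $\ker(\psi)$ is the $1$-dimensional socle, so $\soc^2(M) \not\subseteq \ker(\psi)$. A stably trivial map between projective-free modules can therefore violate the very conclusion you need to preserve, and there is nothing to absorb the correction term into while keeping the intermediate projective-free.

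What this shows is that the corollary cannot hold if ``$f$ is an $l$-fold ghost'' is read as ``$[f] \in \cG^l$.'' The intended (and provable) reading --- the one used later in the paper, e.g., in the proof of Theorem~\ref{th:C_3xC_3} --- is that $f$ is literally a composite $g_l \circ \cdots \circ g_1$ in $\Mod{kG}$ in which each $g_i$ is a module map representing a ghost and each intermediate module is projective-free. With that hypothesis the lifting and absorption paragraph is unnecessary, and the remainder of your induction goes through verbatim; so the fix is to drop the reduction and state the hypothesis in this module-level form.
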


The next lemma provides ghosts with a particular form.

\begin{lem}[Benson, Chebolu, Christensen and Min\'a\v{c}~\cite{GH for p}]\label{le:x-1}
Let $G$ be a $p$-group, and let $k$ be a field of characteristic $p$. 
Let $x \in G$ be a central element.  
Then left multiplication by $x-1$ on a $kG$-module $M$ is a ghost.\qed
\end{lem}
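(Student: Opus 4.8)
The plan is to show directly that left multiplication by $x-1$ induces the zero map on Tate cohomology in every degree, reducing to degree $0$ by exploiting that this operation is natural and commutes with $\Omega$.

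First, since $x-1\in kG$ is central, left multiplication by $x-1$ on a $kG$-module $M$ is a $kG$-linear endomorphism $\mu_M\colon M\to M$, and the assignment $M\mapsto\mu_M$ is a natural transformation of the identity functor on $\Mod{kG}$ (for $f\colon M\to N$ one has $\mu_N\circ f=f\circ\mu_M$ because $f$ is $kG$-linear). This descends to a natural transformation of the identity functor on $\StMod{kG}$. Next I would check that $\mu$ commutes with syzygies, i.e.\ $\Omega(\mu_M)=\mu_{\Omega M}$ and $\Omega^{-1}(\mu_M)=\mu_{\Omega^{-1}M}$ in $\StMod{kG}$. For $\Omega$: given a surjection $\pi\colon P\to M$ with $P$ projective, naturality gives $\pi\circ\mu_P=\mu_M\circ\pi$, so $\mu_P$ is a lift of $\mu_M$ along $\pi$, it carries $\ker\pi=\Omega M$ into itself, and its restriction there is exactly $\mu_{\Omega M}$; since $\Omega(\mu_M)$ is by definition the endomorphism of $\Omega M$ induced on kernels by any such lift, $\Omega(\mu_M)=\mu_{\Omega M}$. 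The statement for $\Omega^{-1}$ is dual, using an injection $M\to I$ with $I$ injective. Hence $\Omega^{n}(\mu_M)=\mu_{\Omega^{n}M}$ for all $n\in\Z$.

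Then comes the degree-$0$ computation: for any $kG$-module $L$, post-composition with $\mu_L$ is the zero map on $\hat{H}^{0}(G,L)\iso[k,L]$. Indeed, a map $k\to L$ in $\StMod{kG}$ is represented by an honest $kG$-map, which corresponds to an element $m\in L^{G}$, and $\mu_L$ sends it to $(x-1)m=0$. (Alternatively, this is Lemma~\ref{le:soc}(a), since $x-1\in J(kG)$ annihilates $\soc(L)$.) Finally, for any $n\in\Z$ the equivalence $\Omega^{-n}$ identifies $[\Omega^{n}k,M]$ with $[k,\Omega^{-n}M]$ and carries post-composition with $\mu_M$ to post-composition with $\Omega^{-n}(\mu_M)=\mu_{\Omega^{-n}M}$, which is zero by the previous step applied to $L=\Omega^{-n}M$. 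Thus $\mu_M$ induces zero on $\hat{H}^{n}(G,M)$ for every $n$, i.e.\ $\mu_M$ is a ghost.

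The only step requiring genuine verification is the compatibility of $\mu$ with $\Omega$ (and $\Omega^{-1}$); everything else is immediate, so I expect the write-up to be short.
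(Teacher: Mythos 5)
Your proof is correct. Note that the paper itself gives no argument for this lemma: it is stated with a citation to~\cite{GH for p} and closed with a $\qed$, so there is no in-paper proof to compare against. Your argument is the natural one and almost certainly matches the spirit of the original: centrality of $x$ makes $\mu_M = (x-1)\cdot(-)$ a $kG$-module endomorphism and a natural transformation of the identity functor, naturality plus Schanuel gives $\Omega^{\pm 1}(\mu_M) = \mu_{\Omega^{\pm 1}M}$ in $\StMod{kG}$, and in degree zero $\mu_L$ kills $[k,L]\cong L^G/\PHom$ because $(x-1)m=0$ for $m\in L^G$. Translating by $\Omega^{-n}$ then kills every degree.

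One small stylistic remark: you can shorten the final step. Instead of shifting $M$, apply naturality to the map $f\colon\Omega^n k\to M$ itself: $\mu_M\circ f=f\circ\mu_{\Omega^n k}$, and $\mu_{\Omega^n k}=\Omega^n(\mu_k)$ is stably trivial because $\mu_k=0$ already in $\Mod{kG}$ (as $x$ acts trivially on the trivial representation $k$). This avoids invoking the isomorphism $[\Omega^n k,M]\cong[k,\Omega^{-n}M]$ and gives the conclusion in one line once $\Omega$-compatibility is in hand. Also, your parenthetical appeal to Lemma~\ref{le:soc}(a) is fine as intuition, but as stated that lemma is for projective-free modules, so the direct computation via $L^G$ you give first is the cleaner justification.
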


Note that in general there are ghosts not of this form.
Nevertheless these ghosts work well for abelian groups
in providing lower bounds for ghost numbers (see Section~\ref{ss:gn-abelian}).
It is not hard to check that if $G$ is a cyclic $p$-group with generator $g$, 
then $g-1$ is a universal ghost.

\subsection{Generating and socle lengths}\label{ss:gen-and-socle-length}

We now show that
the generating length is a stabilized version of the socle length.
In this section we allow our projective classes to be unstable,
that is, we don't assume that the projectives are closed under
suspension and desuspension.

Let $G$ be a $p$-group, let $k$ be a field of characteristic $p$,
and let $M$ be a $kG$-module.
Note that $\soc(M)$ contains exactly the image of maps from $k$.
So, when we build up $M$ in a socle sequence in Theorem~\ref{th:finite gl},
we are only using maps from $k$, not all suspensions of $k$.
This suggests that we consider the unstable projective class generated by $k$
in $\StMod{kG}$.  We will show that the length with respect to this projective class
is exactly the socle length for projective-free modules in $\stmod{kG}$.


Note that the regular representation $kG$ is the only indecomposable projective $kG$-module,
and $\soc(kG) \iso k$ is its unique minimal left submodule.
Thus any map $kG \to M$ in $\Mod{kG}$ with $M$ projective-free has
$\soc(kG)$ in its kernel,
since the map cannot be injective.
It follows that a map $\oplus k \to M$ in $\Mod{kG}$ with $M$ projective-free
is stably trivial if and only if it is the zero map.
For finitely generated modules, 
a similar argument shows that the same is true for a map $M \to \oplus k$ in $\modu{kG}$ with $M$ projective-free.

\begin{pro}
Let $G$ be a $p$-group, and let $k$ be a field of characteristic $p$.
Let $(\cP, \cI)$ be the unstable projective class in $\StMod{kG}$ generated by $k$.
Then a map $f: M \to N$ between projective-free objects $M$ and $N$ is in $\cI$
if and only if it is represented by a map $f$
such that $\soc(M) \subseteq \ker(f)$.
Hence, if $M$ is finitely-generated and projective-free,
the length of $M$ with respect to $(\cP, \cI)$ is exactly its socle length.
\end{pro}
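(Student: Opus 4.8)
The plan is to establish the morphism-level characterization first and then deduce the statement about lengths by induction. For the forward direction, suppose $f : M \to N$ is in $\cI$. By definition of the projective class generated by $k$ (in the unstable sense, so $\cP$ is generated by $k$ alone, not its suspensions), every composite $k \to M \xrightarrow{f} N$ is stably trivial. By the observation immediately preceding the proposition, a map $\oplus k \to N$ with $N$ projective-free is stably trivial if and only if it is the zero map; applying this to each copy of $k$ mapping into $M$ via an element of $\soc(M)$ (recall $\soc(M)$ is exactly the image of maps $k \to M$), we get that $f$ kills $\soc(M)$ on the nose, i.e. $\soc(M) \subseteq \ker(f)$ for the chosen representative. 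Conversely, if $f$ has a representative with $\soc(M) \subseteq \ker(f)$, then any composite $k \to M \xrightarrow{f} N$ factors through $M/\soc(M)$ and in particular lands in $N$ starting from a map whose image misses $\soc(M)$; since the image of $k$ in $M$ lies in $\soc(M)$, the composite is literally zero, hence stably trivial, so $f \in \cI$. This is essentially Lemma~\ref{le:soc}(a) repackaged for the unstable class, and I would simply cite that lemma's proof technique (Tate cohomology in degree $0$) or the elementary argument just given.

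For the length statement, I would argue that $\len_{\cP}(M) \leq n$ if and only if $\soc^n(M) = M$, proceeding by induction on $n$. The base case $n = 0$ is immediate since $M \in \cP_0$ iff $M \cong 0$ iff $\soc^0(M) = M$. For the inductive step, by the construction of derived projective classes, $M \in \cP_n$ iff $M$ is a retract of an object sitting in a cofibre sequence $P \to M' \to Q$ with $P \in \cP$ and $Q \in \cP_{n-1}$; equivalently (using the reformulation of length via the ideal), $\len_{\cP}(M) \leq n$ iff every map in $\cI^n$ out of $M$ is trivial. Using the morphism characterization just proved, an $n$-fold composite of maps each killing successive socles kills $\soc^n(M)$ — this is the analogue of Corollary~\ref{co:soc}(a). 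So if $\soc^n(M) = M$, every $n$-fold $\cI$-composite out of $M$ is zero, giving $\len_{\cP}(M) \leq n$. Conversely, if $\soc^n(M) \subsetneq M$, I would exhibit an explicit nonzero map in $\cI^n$ out of $M$, namely the quotient map $M \to M/\soc^n(M)$: at each stage $M/\soc^i(M) \to M/\soc^{i+1}(M)$ is a surjection killing the socle of its source, hence in $\cI$ by the characterization, and the total composite is nonzero since $\soc^n(M) \neq M$, so $\len_{\cP}(M) > n$.

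The main obstacle I anticipate is the bookkeeping around \emph{projective-free} representatives and the unstable vs. stable distinction. The morphism characterization is stated for maps between projective-free objects, and the socle series is a module-theoretic (not stable) construction, so I need to be careful that passing to $\StMod{kG}$ and back does not disturb the socle filtration — this is fine because projective-free modules have well-defined socle series and the stable isomorphism class determines the projective-free summand, but it must be said cleanly. A second, smaller subtlety is the converse direction of the morphism characterization: I claimed the quotient maps $M/\soc^i(M) \to M/\soc^{i+1}(M)$ are in $\cI$, which uses that $\soc(M/\soc^i(M)) = \soc^{i+1}(M)/\soc^i(M)$ is precisely the kernel of this quotient map — a standard but necessary identity about socle series. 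I would state this identity explicitly and then the argument goes through. One should also note that finite generation is used only to ensure the dual statement (maps \emph{to} $\oplus k$) and to guarantee the socle length is finite so that $\len_{\cP}(M)$ exists; for the equality of the two (possibly infinite) quantities the induction above suffices.
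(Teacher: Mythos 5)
Your proof follows essentially the same route as the paper's: the morphism characterization is Lemma~\ref{le:soc}(a), and the length is controlled via the iterated quotient maps $M/\soc^i(M) \to M/\soc^{i+1}(M)$, which compose to give the universal $n$-fold $\cI$-map $M \to M/\soc^n(M)$.

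However, there is a gap in your lower bound. You exhibit $q\colon M \to M/\soc^n(M)$ as an $n$-fold composite in $\cI$ and conclude $\len_{\cP}(M) > n$ because ``the total composite is nonzero.'' But being nonzero as a module map does not imply being stably nontrivial, and the latter is what is needed. You flag in your last paragraph that the dual observation --- a map $M \to \oplus\, k$ from a finitely generated projective-free $M$ is stably trivial iff it is the zero map --- is where finite generation enters, but you never actually apply it, and your $q$ does not have target $\oplus\, k$ unless $n$ is one less than the socle length of $M$. The paper's proof takes exactly that $n$, so that $M/\soc^n(M) \iso \oplus\, k$, and invokes the dual observation directly. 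Alternatively, if $q$ were stably trivial then by Lemma~\ref{le:soc}(b) its image would lie in $\rad(M/\soc^n(M))$, contradicting surjectivity and Nakayama's lemma for the nonzero finitely generated module $M/\soc^n(M)$; this argument also uses finite generation. Either repair closes the gap, but as written the step is unjustified. (It is also worth recording explicitly that each $M/\soc^i(M)$ is projective-free --- any projective summand would split off of $M$ via the surjection $M \twoheadrightarrow M/\soc^i(M)$ --- since Lemma~\ref{le:soc} is stated under that hypothesis.)
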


\begin{proof}
That $f \in \cI$ is equivalent to $\soc(M) \subseteq \ker(f)$ is Lemma~\ref{le:soc}~(a).

Now let $M$ be projective-free. Then $M \to M / \soc(M)$ is a
universal map in $\cI$. It follows that $M \to M/ \soc^{k}(M)$
is universal in $\cI^{k}$.
If $M$ has socle length $n$, then $M \in \cP^n$ and
$M \to M/ \soc^{n-1}(M)$ is non-zero.
If further $M$ is finitely-generated, then the universal map
$M \to M/ \soc^{n-1}(M) \iso \oplus k$ is stably non-trivial,
by the remarks preceding this proposition.
Thus $M$ has length $n$ with respect to $(\cP, \cI)$.
\end{proof}

Note that the stable projective class generated by $k$ in $\StMod{kG}$ is exactly the ghost projective class.
Thus the generating length is indeed the socle length stabilized and
is generally less than or equal to the socle length.
We have also recovered Theorem~\ref{th:finite gl} from this observation.
In Section~\ref{ss:cyclic sub}, we are going to prove that
the generating number of $kG$ is within a factor of 3 of the socle length of $kG$.

Here we show that if the generating length of a module $M \in \StMod{kG}$
happens to equal its socle length 
(see, for example, Proposition~\ref{prop:abelian module} and Theorem~\ref{th:rl=gel}),
then the same holds for $\rad(M)$ and $M/\soc(M)$,
a result that we will use in Section~\ref{ss:gn-D4q} when studying dihedral groups.

\begin{pro}\label{prop:length of rad}
Let $k$ be a field of characteristic $p$, and let $G$ be a $p$-group.
Assume that $M \in \StMod{kG}$ has generating length equal to its radical length.
Then $\gel(M/ \soc(M))=\gel(M)-1$, and similarly $\gel(\rad(M)) = \gel(M)-1$.
\end{pro}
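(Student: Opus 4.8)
The plan is to prove the two equalities by combining the general inequality $\gel(-) \leq$ socle length (Theorem~\ref{th:finite gl}, applied to $\rad(M)$ and to $M/\soc(M)$) with lower bounds $\gel(M/\soc(M)) \geq \gel(M)-1$ and $\gel(\rad(M)) \geq \gel(M)-1$, and then checking that in each case the socle (resp.\ radical) length of the relevant module is exactly $\gel(M)-1$. Since $\gel(M)$ equals the socle length of $M$ by hypothesis, we have socle length of $M$ equal to radical length of $M$, call it $n$; then $M/\soc(M)$ has socle length $n-1$ and $\rad(M)$ has radical length $n-1$ (standard facts about the socle and radical series of a finitely generated module over a $p$-group, using that the series are exhaustive of the same length $n$). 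So Theorem~\ref{th:finite gl} already gives $\gel(M/\soc(M)) \leq n-1$ and $\gel(\rad(M)) \leq n-1$, and it remains to prove the two reverse inequalities.

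For the lower bound on $\gel(M/\soc(M))$, first I would exhibit a surjection-type relation. The quotient map $q: M \to M/\soc(M)$ is a ghost by Lemma~\ref{le:soc}~(a) (its kernel contains $\soc(M)$), indeed it is the universal ghost out of $M$ up to the projective-class machinery. So if $\phi$ is an $(\gel(M)-1)$-fold ghost out of $M/\soc(M)$ that is stably nontrivial — which exists unless $\gel(M/\soc(M)) < \gel(M)-1$, i.e., unless $\gel(M/\soc(M)) \leq \gel(M)-2$ — then $\phi \circ q$ would be a $\gel(M)$-fold ghost out of $M$; but the length of $M$ with respect to the ghost projective class is $\gel(M)$, so such a composite must still be able to be nontrivial. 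The clean way to run this is via the derived projective classes: $M \in \cF_{\gel(M)}$ and $M \to M/\soc(M)$ is universal in $\cF_1$ (by the Proposition just proved, the universal map in $\cI$ out of a projective-free module is $M \to M/\soc(M)$), so $M/\soc(M)$ must have length at least $\gel(M)-1$ with respect to $(\cF, \cG)$, i.e.\ $\gel(M/\soc(M)) \geq \gel(M)-1$, directly from the definition of $\cF_n$ as retracts of objects sitting in a cofibre sequence $P \to M \to Q$ with $P \in \cF$ and $Q \in \cF_{n-1}$. Dually, $\soc(M) \hookrightarrow M$ fits into a triangle whose third term is $M/\soc(M)$, and one argues the analogous bound for $\rad(M)$ using the universal dual map $\rad(M) \hookrightarrow M$ together with Lemma~\ref{le:soc}~(b); alternatively, apply the already-proved half of Corollary~\ref{cor:M and HM}-type reasoning or pass to $\Omega M$, whose radical and socle behavior mirrors that of $M$.

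The main obstacle I anticipate is handling $\rad(M)$ rather than $M/\soc(M)$: the clean statement ``the universal ghost out of a projective-free module $M$ is $M \to M/\soc(M)$'' is about \emph{maps out of} $M$, whereas $\rad(M)$ naturally appears via the \emph{dual} statement about maps \emph{into} $M$. So I would either (a) dualize carefully, invoking that over a $p$-group ghosts and dual ghosts coincide (as cited from~\cite{Gh in rep} just after Proposition~\ref{prop:length r-a-s}), to transfer the socle argument to a radical argument, or (b) use the short exact sequence $0 \to \rad(M) \to M \to M/\rad(M) \to 0$ with $M/\rad(M) \iso \oplus k$, giving a triangle $\rad(M) \to M \to \oplus k \to \Sigma\,\rad(M)$ in $\StMod{kG}$, and feed this into Lemma~\ref{le:gl+ar}/Lemma~\ref{le:gel+ar} with appropriate $m$; since $\oplus k$ has generating length $1$, and the connecting map is a ghost, one gets $\gel(M) \leq \max(\gel(\rad(M)), 1) \cdots$ type control yielding $\gel(\rad(M)) \geq \gel(M)-1$. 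Either route is short; the care needed is just in matching the ``power of the ideal'' of the relevant connecting map. Once both reverse inequalities are in hand, the two displayed equalities follow by sandwiching against the socle/radical length bounds.
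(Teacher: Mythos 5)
Your argument is essentially the paper's: the upper bounds on $\gel(M/\soc(M))$ and $\gel(\rad(M))$ come from socle/radical length, and the lower bounds from subadditivity of length over the triangles $\soc(M) \to M \to M/\soc(M)$ and $\rad(M) \to M \to M/\rad(M)$, which the paper simply packages into a short lemma about a pair of nested projective classes $\cP' \subseteq \cP$ (socle-length for the unstable class, generating length for the stable one). Your anticipated obstacle in the $\rad(M)$ case is not real: the same subadditivity $\len(M) \leq \len(\rad(M)) + \len(M/\rad(M)) = \len(\rad(M)) + 1$ (your option~(b), or just the inequality $n \leq k+l$ cited from~\cite{Chr} at the start of Section~\ref{ss:length and tri}) gives the lower bound directly, with no need to dualize or invoke dual ghosts.
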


\begin{proof}
Since the generating length of $M$ is strictly less than the nilpotency index of $J(kG)$, $M$ is projective-free.
The proposition is then a special case of the following more general lemma.
\end{proof}



\begin{lem}
Let $\T$ be a triangulated category, and let $(\cP,\cI)$ and $(\cP',\cI')$ be (possibly unstable) projective classes
on $\T$ such that $\cP' \subseteq \cP$.
Suppose that $M \in \T$ has $\len_{\cP'}(M)= \len_{\cP}(M)=m$ and that there exist
$L \in \cP'_{m-n}$ and $N \in \cP'_n$ with a triangle
\[ L \to M \to N. \]
Then
\[ \len_{\cP'}(L)= \len_{\cP}(L)=m-n, \text{ and }
 \len_{\cP'}(N)= \len_{\cP}(N)=n.\]
\end{lem}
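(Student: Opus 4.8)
The plan is to deduce everything from two elementary facts: that shrinking the class of projectives can only increase the length of an object, and that length is subadditive along a triangle. For the first, $\cP'\subseteq\cP$ forces $\cP'_j\subseteq\cP_j$ for every $j\ge 0$, by an immediate induction on $j$ using the recursive description of the derived projective classes (every stage of the construction for $\cP'$ is also a legal stage for $\cP$, and $\cP_j$ is closed under retracts). Hence $\len_{\cP}(X)\le\len_{\cP'}(X)$ for all $X$; in particular $\len_{\cP}(L)\le\len_{\cP'}(L)$ and $\len_{\cP}(N)\le\len_{\cP'}(N)$, so it suffices to compute the $\cP'$-lengths and then to establish the reverse inequalities $\len_{\cP}(L)\ge m-n$ and $\len_{\cP}(N)\ge n$.

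Next I would pin down the $\cP'$-lengths. Since $L\in\cP'_{m-n}$ and $N\in\cP'_n$ we have $\len_{\cP'}(L)\le m-n$ and $\len_{\cP'}(N)\le n$. Applying subadditivity of length along the triangle $L\to M\to N\to\Sigma L$ --- the inequality $n\le k+l$ of~\cite{Chr}, whose proof uses only the octahedral axiom and a factorization of an iterated composite of maps in $\cI'$, and so holds for possibly unstable projective classes --- to the class $(\cP',\cI')$ gives $m=\len_{\cP'}(M)\le\len_{\cP'}(L)+\len_{\cP'}(N)$. Thus $m\le\len_{\cP'}(L)+\len_{\cP'}(N)\le(m-n)+n=m$, so equality holds throughout, and since the two summands are individually bounded above by $m-n$ and $n$ this forces $\len_{\cP'}(L)=m-n$ and $\len_{\cP'}(N)=n$.

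Finally, the $\cP$-lengths follow in the same way: monotonicity gives $\len_{\cP}(L)\le m-n$ and $\len_{\cP}(N)\le n$, and subadditivity applied now to $(\cP,\cI)$ along the same triangle gives $m=\len_{\cP}(M)\le\len_{\cP}(L)+\len_{\cP}(N)\le(m-n)+n=m$, which again forces $\len_{\cP}(L)=m-n$ and $\len_{\cP}(N)=n$. I do not expect a genuine obstacle; the only points needing care are to apply the subadditivity inequality with $M$ occupying the middle position of the triangle, and to note that the cited form of that inequality does not tacitly assume the projective class is stable.
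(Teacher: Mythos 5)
Your proof is correct and follows essentially the same route as the paper's: the upper bounds come from the hypotheses $L\in\cP'_{m-n}$ and $N\in\cP'_n$, the lower bound comes from subadditivity of length along the triangle, and monotonicity of derived projective classes transfers the conclusion from $\cP'$ to $\cP$. The paper states this more tersely but the argument is identical; your explicit remark that the subadditivity inequality of~\cite{Chr} does not depend on stability of the projective class is a useful clarification that the paper leaves implicit.
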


\begin{proof}
We have that $\len_{\cP'}(L) \leq m-n$ and $\len_{\cP'}(N) \leq n$.
But $\len_{\cP'}(L)+ \len_{\cP'}(N) \geq m = (m-n) + n$, so the equalities follow
for $(\cP',\cI')$.
Since $\cP' \subseteq \cP$, the same results hold for $(\cP, \cI)$ too.
\end{proof}

Intuitively, this easy fact says that when $\len_{\cP'}(M)= \len_{\cP}(M)$,
the related object $L$ can be built from $\cP'$ as efficiently as it can
be built from $\cP$.
It applies to generating lengths and socle lengths.


\medskip
We now provide examples of computations of ghost numbers of certain groups,
improving on results in~\cite{Gh in rep}.

\subsection{Ghost numbers of abelian $p$-groups}\label{ss:gn-abelian}

We first prove a general proposition. It generalizes~\cite[Lemma 2.3]{GH for p}
and~\cite[Prop.~5.10]{Gh in rep}.

\begin{pro}\label{pr:induced-from-cyclic}
Let $k$ be a field of characteristic $p$, and let $H$ be a non-trivial subgroup of a $p$-group $G$. 
Assume that there exists a central element $x$ in $G$.
Let $l$ be the smallest positive integer such that $x^l \in H$.
Suppose that $M \in \StMod{kH}$ has generating length $m \geq 1$. Then
$ \gel(M \up ^G) \geq \gel(M) + (l-1)$, and
\[\text{generating number of }kG \geq \text{generating number of }kH +(l-1).\]
Suppose that $M \in \stmod{kH}$ has ghost length $n \geq 1$. Then
$ \gl(M \up ^G) \geq \gl(M) + (l-1)$, and
\[\text{ghost number of }kG \geq \text{ghost number of }kH +(l-1).\]
\end{pro}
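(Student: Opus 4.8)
The plan is to reduce everything to a statement about ghost length of the induced module and then feed it through the universal bound on $kG$. I would treat the generating-length half and the ghost-length half in parallel, since the key mechanism is the same: produce a long nontrivial composite of ghosts out of $M \up^G$ by combining the long composite of ghosts out of $M$ (guaranteed by $\gl(M) = n$) with extra ghosts coming from multiplication by $x-1$. By Lemma~\ref{le:x-1}, left multiplication by $x-1$ on any $kG$-module is a ghost, and since $x$ is central, it is a $kG$-module endomorphism of $M\up^G$. So my first step is to understand multiplication by $x-1$ on $M\up^G = kG \otimes_{kH} M$. Because $l$ is the smallest positive integer with $x^l \in H$, the cosets $H, xH, \dots, x^{l-1}H$ are distinct, and the submodule structure of $M\up^G$ with respect to the action of $x$ should let me see that $(x-1)^{l-1}$ is nonzero on $M\up^G$ in a way that is ``detected'' by the bottom layer — concretely, on the image of $M$ inside $M\up^G$, applying $(x-1)^{l-1}$ lands in a copy of $M$ sitting in a different block of cosets and is nonzero there. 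This is the analogue of the computation in \cite[Lemma~2.3]{GH for p} and \cite[Prop.~5.10]{Gh in rep}.

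Second, I would build the composite. Start from a nontrivial $(\cG_t)^n$-ghost $g : M \to L$ in $\thick{k} = \stmod{kH}$ witnessing $\gl(M) = n$. Induce it up: $g\up^G : M\up^G \to L\up^G$ is still a ghost over $kG$ — induction of a ghost is a ghost because restriction of ghosts to $H$ of an induced ghost factors appropriately (here I would cite or verify the standard adjunction fact that $\operatorname{res}^G_H$ sends Tate cohomology of $kG$-modules to a sum of Tate cohomologies over $kH$, so that $(g\up^G)$ being zero on $\hat H^*(G,-)$ follows from $g$ being zero on $\hat H^*(H,-)$; alternatively, observe that $M\up^G$ restricted to $H$ contains $M$ as a summand and use Corollary~\ref{co:soc}-type reasoning). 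Then precompose with $(x-1)^{l-1}$, giving an $(n + l - 1)$-fold ghost $g\up^G \circ (x-1)^{l-1} : M\up^G \to L\up^G$. The crux is to show this composite is stably nontrivial. I would do this by restricting to $H$: on restriction, $M\up^G|_H \cong \bigoplus_{i=0}^{l-1} x^i M$ (up to the coset bookkeeping), $(x-1)^{l-1}$ carries the $x^0$-summand isomorphically onto a nonzero map into the $x^{l-1}$-ish summand — more precisely onto a copy of $M$ — and then $g$ acts nontrivially there. So the restriction of the composite to $H$ is stably nontrivial, hence the composite is stably nontrivial over $kG$. This gives $\gl(M\up^G) \ge n + (l-1) = \gl(M) + (l-1)$.

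Third, the statements about the ghost number of $kG$ follow immediately: take $M$ over $kH$ realizing (or approaching) the ghost number of $kH$, i.e.\ with $\gl(M)$ equal to the ghost number of $kH$, and apply the module inequality to get $\gl(M\up^G) \ge (\text{ghost number of } kH) + (l-1)$, whence the ghost number of $kG$ is at least that. The generating-length version is the same argument with $\cF_n$/generating length in place of $\cG_t$/ghost length throughout; the only point to check is that the universal-ghost factorization and the ``induction of a ghost is a ghost'' step work at the level of the ghost projective class $(\cF,\cG)$, which they do since $\cF$ consists of retracts of sums of suspensions of $k$ and induction of such is again built from suspensions of $k\up^G$, which has finite generating length.

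I expect the main obstacle to be the nontriviality of the composite — specifically, verifying cleanly that $g\up^G \circ (x-1)^{l-1}$ does not factor through a projective. Restricting to $H$ is the natural tool, and one must be careful that (i) $(x-1)^{l-1}$ really is nonzero on the relevant summand of $M\up^G|_H$ and lands in a genuine copy of $M$ (not something with a projective summand obscuring the map), and (ii) the image of that copy under $g$ is stably nontrivial, which uses that $g$ itself was stably nontrivial and that the copy of $M$ in question is stably isomorphic to $M$. The coset bookkeeping — choosing compatible transversals of $H$ in $G$ and of $\langle x\rangle H$ in $G$ so that the action of $x-1$ is upper triangular with the desired off-diagonal behaviour — is the routine-but-fiddly part I would suppress in a sketch.
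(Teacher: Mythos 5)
Your proposal follows essentially the same strategy as the paper: start from a maximal nontrivial composite of ghosts out of $M$ over $kH$, induce it up, compose with the ghost $(x-1)^{l-1}$, and detect nontriviality by restricting to $H$. Two small remarks. First, a bookkeeping slip: a nontrivial map \emph{witnessing} $\gl(M)=n$ lives in $(\cG_t)^{n-1}$, not $(\cG_t)^n$, so the composite you build is an $(n+l-2)$-fold ghost; that is exactly what is needed for $\gl(M\up^G)\geq n+(l-1)$, but your running index is off by one throughout (your stated conclusion is nonetheless the correct one). Second, the decomposition $M\up^G|_H \cong \bigoplus_{i=0}^{l-1} x^i M$ you invoke is not literally right: in general $[G:H]$ need not equal $l$, and the Mackey decomposition of $M\up^G|_H$ can be considerably larger, with the apparent $x^i$-blocks not even forming left $H$-submodules. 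The paper sidesteps this entirely by working only with the natural unit $i\colon M \to M\up\down$, $\alpha \mapsto 1\otimes\alpha$, and the $H$-equivariant retraction $r\colon N\up\down \to N$ that kills every $g\otimes\alpha$ with $g\notin H$; one then uses naturality of $i$ to slide $f$ out front and computes directly that $r\circ (x-1)^{l-1}\down\circ i = (-1)^{l-1}\,\mathrm{id}$, since the only surviving term in the binomial expansion is $j=0$ (as $x^j\notin H$ for $1\leq j\leq l-1$). This gives the nontriviality without any appeal to a full coset decomposition or to which summand $(x-1)^{l-1}$ ``lands in.'' Your instinct to project to the $x^{l-1}$-coset also works (that coefficient is $1$), but the $j=0$ projection via $r$ is cleaner and is what the paper uses.
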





\begin{proof}
For brevity, we write $\down$ for $\down^G_H$ and $\up$ for $\up^G_H$.
Let $f: M \to N$ be a non-trivial $(m-1)$-fold ghost in $\StMod{kH}$.
We will show that $(x-1)^{l-1} \circ f \up$ is stably non-trivial.
Since ghosts induce up to ghosts and $x-1$ is a ghost, it
follows that there exists a non-trivial composite of $(m-1) + (l-1)$
ghosts in $\StMod{kG}$.

Consider the map $M \xrightarrow{i} M\up\down \xrightarrow{f\up\down}
          N\up\down \xrightarrow{(x-1)^{l-1} \down} N\up\down \xrightarrow{r} N$,
where $i$ and $r$ are the natural maps.
To be more explicit, $M\up_H^G = kG \otimes_H M$,
$i(\alpha) = 1 \otimes \alpha$ and
$r(g \otimes \alpha) = g \alpha$ if $g \in H$ and is zero otherwise.
By naturality of the inclusion, the composite equals
$M \xrightarrow{f} N \xrightarrow{i} N\up\down \xrightarrow{(x-1)^{l-1} \down}
N\up\down \xrightarrow{r} N$.
Since $x^i \not \in H$ for $i \leq l-1$, the map $N \xrightarrow{i} N\up\down \xrightarrow{(x-1)^{l-1} \down}
N\up\down \xrightarrow{r} N$ is simply multiplication by $(-1)^{l-1}$, an isomorphism.
Since $N$ is stably non-zero, it follows that $(x-1)^{l-1}\down \circ f \up\down$ and therefore
$(x-1)^{l-1} \circ f \up$ are stably non-trivial.

The result on ghost length and ghost number can be proved similarly by replacing $\StMod{kG}$ with $\stmod{kG}$.
\end{proof}

We can apply this proposition to abelian groups.

\begin{thm}\label{th:ab}
Let $k$ be a field of characteristic $p$, and
let $A=C_{p^r}\times C_{p^{r_1}}\times \cdots \times C_{p^{r_l}}$ be an abelian $p$-group. 
Then
\[ m-p^r+\left\lceil \frac{p^r-1}{2}\right\rceil \leq \text{ghost number of } kA
   \leq \text{generating number of } kA \leq m-1, \]
where $m$ is the nilpotency index of $J(kA)$, 
and $p^r$ is the order of the smallest cyclic summand.
\end{thm}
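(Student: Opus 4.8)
The plan is to derive all three inequalities from results already established. The upper bound $\text{generating number of } kA \leq m-1$ is immediate from Theorem~\ref{th:finite gl}, since the nilpotency index of $J(kA)$ is $m$. The middle inequality $\text{ghost number of } kA \leq \text{generating number of } kA$ is the general fact recorded in Section~\ref{ss:pc}. So the content is the lower bound on the ghost number, and here I would proceed by induction on the number $l$ of cyclic factors beyond the first, using Proposition~\ref{pr:induced-from-cyclic} to peel off one factor at a time.

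First I would handle the base case $A = C_{p^r}$. For a cyclic $p$-group, $kC_{p^r}$ is a Nakayama algebra with indecomposable modules $k$, $\rad(kC_{p^r})/\rad^{p^r}$, and so on up to $kC_{p^r}$ itself; the nilpotency index is $m = p^r$. I would exhibit a module — concretely the uniserial module $V$ of $k$-dimension $\lceil (p^r-1)/2 \rceil$, or rather the module realizing the longest nonzero composite of the universal ghost $g-1$ — whose ghost length is at least $p^r - p^r + \lceil (p^r-1)/2 \rceil = \lceil (p^r-1)/2 \rceil$. The key computational input is that $g-1$ is a universal ghost (noted at the end of Section~\ref{ss:bg}), so the ghost length of a uniserial module of dimension $d$ is governed by how many times one can multiply by $g-1$ before landing in a projective; combined with Corollary~\ref{co:soc} (the socle-filtration constraint on iterated ghosts) this pins down the ghost length exactly. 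This base case is essentially the computation underlying~\cite[Lemma 2.3]{GH for p} and~\cite[Prop.~5.10]{Gh in rep}, which Proposition~\ref{pr:induced-from-cyclic} was designed to generalize.

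For the inductive step, write $A = C_{p^{r_l}} \times A'$ where $A' = C_{p^r} \times \cdots \times C_{p^{r_{l-1}}}$ has nilpotency index $m' = m - (p^{r_l} - 1)$ (since the radical length of a tensor product of cyclic group algebras adds up after subtracting ones). I would apply Proposition~\ref{pr:induced-from-cyclic} with $G = A$, $H = A'$, and $x$ a generator of the $C_{p^{r_l}}$ factor: the smallest $l$ with $x^l \in H$ is $p^{r_l}$, so the proposition gives $\text{ghost number of } kA \geq \text{ghost number of } kA' + (p^{r_l} - 1)$. By induction the right side is at least $m' - p^r + \lceil (p^r-1)/2 \rceil + (p^{r_l} - 1) = m - p^r + \lceil (p^r-1)/2 \rceil$, which is exactly the claimed bound. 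I should check that Proposition~\ref{pr:induced-from-cyclic} applies with $x$ central (automatic here, $A$ abelian) and $H$ nontrivial (true since $A'$ contains $C_{p^r}$ with $p^r \geq 2$), and that the choice of which factor to strip does not matter — it doesn't, because the final bound depends only on $m$ and on $p^r$, the order of the \emph{smallest} cyclic summand, which we are careful never to remove during the induction.

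The main obstacle is the base case: establishing that $\text{ghost number of } kC_{p^r}$ is \emph{exactly} $\lceil (p^r-1)/2 \rceil$ (or at least bounded below by it) requires an honest analysis of iterated multiplication by $g-1$ on the uniserial modules and a matching of the socle-length lower bound from Corollary~\ref{co:soc} against an explicit construction of a long ghost composite. Everything after that is bookkeeping with the additivity $m = p^r + \sum_{i=1}^{l}(p^{r_i} - 1)$ and repeated invocation of Proposition~\ref{pr:induced-from-cyclic}.
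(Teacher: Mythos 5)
Your proposal is correct and follows essentially the same route as the paper: both establish the lower bound by applying Proposition~\ref{pr:induced-from-cyclic} iteratively to adjoin the cyclic summands one at a time, starting from the known ghost number $\lceil (p^r-1)/2\rceil$ of $kC_{p^r}$, and both get the upper bounds directly from Theorem~\ref{th:finite gl} and the general inequality. The only difference is that the paper simply cites the cyclic base case from~\cite[Prop.~5.3]{Gh in rep} rather than re-deriving it, which you could do too.
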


When the prime $p$ is greater than 2,
the result here improves on that in~\cite{Gh in rep}, 
where the lower bound for the ghost number of $kA$ is given by $m-p^r+p^{r-1}
= m - p^r + \lceil (p^r - 1)/p \rceil$.

Note that since
\[ m=1+(p^r-1)+(p^{r_1}-1)+\cdots+(p^{r_l}-1), \]
our lower bound can also be written as
\[ \left\lceil \frac{p^r-1}{2}\right\rceil + (p^{r_1}-1)+\cdots+(p^{r_l}-1). \]

Also note that when $A$ is cyclic, we have $m=p^r$, and the lower bound 
$d = \lceil \frac{p^r-1}{2}\rceil$ here is exactly
the ghost number of $A$~\cite[Thm.~5.4]{Gh in rep}.

\begin{proof}
Let $g$ be a generator of $C_{p^r}$, and let
$g_i$ be a generator of $C_{p^{r_i}}$, $i=1,2,\ldots,l$. 
Write $d = \big\lceil \frac{p^r-1}{2}\big\rceil$.
By the proof of~\cite[Prop.~5.3]{Gh in rep}, $kC_{p^r}$ has ghost number $d$.
We can now apply Proposition~\ref{pr:induced-from-cyclic} by successively including
the summands $C_{p^{r_i}}$ to obtain
\[\text{ghost number of $kA$} \geq
 d + (p^{r_1}-1)+\cdots+(p^{r_l}-1). \]
The other inequalities are from Theorem~\ref{th:finite gl}.
\end{proof}

Proposition~\ref{pr:induced-from-cyclic} allows us to make this explicit.
Let $M=N \up_{C_{p^r}}^A$, with $N=kC_{p^r}/(g-1)^d$. 
Note that $(g-1)^{d-1}$ is a stably non-trivial
$(d-1)$-fold ghost on $N$ in $\stmod{kC_{p^r}}$ and, since $A$ is abelian,
the self map $(g-1)\up_{C_{p^r}}^A$ on $M$ is simply
left multiplication by $g-1$. Hence we have a particular form
for the non-trivial $(m-p^r+d-1)$-fold ghost on $M$:
\[ \theta=(g-1)^{d-1}(g_1-1)^{p^{r_1}-1}\cdots(g_l-1)^{p^{r_l}-1}. \]

More generally, we have the following result.

\begin{pro}\label{prop:abelian module}
Let $k$ be a field of characteristic $p$,
let $A=C_{p^{r_1}}\times C_{p^{r_2}}\times \cdots \times C_{p^{r_l}}$ be an abelian $p$-group, and
let $M_i$ be an indecomposable $C_{p^{r_i}}$-module of dimension $n_i$ for each $i$.
Then the $A$-module $M=M_1 \otimes \cdots \otimes M_l$ has radical length
$1 + (n_1-1) + \cdots + (n_l-1)$.
If $n_i \leq \frac{p^{r_i}}{2}$ for some $i$, then the generating length of $M$ equals its radical length.
\end{pro}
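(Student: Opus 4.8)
The plan is to treat the two assertions in turn. For the radical length, recall that a $d$-dimensional indecomposable module over $kC_{p^{r_i}}\iso k[u]/u^{p^{r_i}}$ (with $u$ corresponding to $g_i-1$ for a generator $g_i$) is uniserial and isomorphic to $kC_{p^{r_i}}/(g_i-1)^{d}$, with $\rad^{a}(M_i)=(g_i-1)^{a}M_i$ of dimension $\max(d-a,0)$. Since $kA\iso kC_{p^{r_1}}\otimes_k\cdots\otimes_k kC_{p^{r_l}}$, the Jacobson radical $J(kA)$ is the augmentation ideal, generated by the elements $g_i-1$, so $J(kA)^{t}$ is spanned over $k$ by the monomials $(g_1-1)^{a_1}\cdots(g_l-1)^{a_l}$ with $a_1+\cdots+a_l\geq t$. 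Such a monomial acts on $M=M_1\otimes\cdots\otimes M_l$ one tensor factor at a time, sending it onto $\rad^{a_1}(M_1)\otimes\cdots\otimes\rad^{a_l}(M_l)$; hence $\rad^{t}(M)=\sum_{a_1+\cdots+a_l=t}\rad^{a_1}(M_1)\otimes\cdots\otimes\rad^{a_l}(M_l)$, which is nonzero precisely when some choice of exponents with $a_i\leq n_i-1$ sums to $t$, that is, when $t\leq(n_1-1)+\cdots+(n_l-1)$. So the radical length of $M$ is $1+(n_1-1)+\cdots+(n_l-1)$.

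For the second assertion, write $\ell=(n_1-1)+\cdots+(n_l-1)$, so the radical length of $M$ is $\ell+1$. Since the generating length is always at most the radical length (Theorem~\ref{th:finite gl}), it suffices to produce a stably non-trivial $\ell$-fold composite of ghosts out of $M$. Relabel the factors so that $n_1\leq p^{r_1}/2$, and consider $\theta=(g_1-1)^{n_1-1}(g_2-1)^{n_2-1}\cdots(g_l-1)^{n_l-1}:M\to M$. Because $A$ is abelian each $g_i$ is central, so each factor $g_i-1$ is a ghost by Lemma~\ref{le:x-1}, and $\theta$ is a composite of $\ell$ ghosts; since $G$ is a $p$-group all the modules involved lie in $\thick k=\stmod{kA}$. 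Granting that $\theta$ is stably non-trivial, the characterization of $\gel(M)$ as the smallest $n$ for which every $n$-fold composite of ghosts out of $M$ is stably trivial forces $\gel(M)\geq\ell+1$, which is the claim. (The same argument bounds $\gl(M)$ below, so in fact both equal the radical length here.)

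It remains to show that $\theta$ is stably non-trivial, which is the step I expect to require the most care. I would detect this by restricting to the direct factor $C_{p^{r_1}}\leq A$: restriction is exact and preserves projective modules, so if $\theta$ were stably trivial then so would be $\theta\down_{C_{p^{r_1}}}$. Viewing $V=M_2\otimes\cdots\otimes M_l$ as a trivial $C_{p^{r_1}}$-module, we have $M\down_{C_{p^{r_1}}}\iso M_1\otimes_k V\iso M_1^{\oplus\dim V}$, and the restriction of $\theta$ is $\theta_1\otimes\xi$, where $\theta_1=(g_1-1)^{n_1-1}:M_1\to M_1$ and $\xi=(g_2-1)^{n_2-1}\otimes\cdots\otimes(g_l-1)^{n_l-1}$ is a rank-one linear endomorphism of $V$ (each $(g_i-1)^{n_i-1}$ has one-dimensional image in $M_i$). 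Choosing bases of $V$ adapted to $\ker\xi$ on the source and to $\im\xi$ on the target identifies $\theta_1\otimes\xi$, as a map of $kC_{p^{r_1}}$-modules, with $\theta_1\oplus 0\oplus\cdots\oplus 0$; hence $\theta\down_{C_{p^{r_1}}}$ is stably non-trivial if and only if $\theta_1$ is. Finally, $\theta_1$ is a nonzero endomorphism of $N:=kC_{p^{r_1}}/(g_1-1)^{n_1}$, and a short computation in $kC_{p^{r_1}}\iso k[u]/u^{p^{r_1}}$ shows that any endomorphism of $N$ that factors through a free module has image contained in $(g_1-1)^{p^{r_1}-n_1}N$, which is zero since $p^{r_1}-n_1\geq n_1$; so $\PHom(N,N)=0$ and $\theta_1$ is stably non-trivial. (Equivalently, one may quote the computation in the proof of~\cite[Prop.~5.3]{Gh in rep} that $kC_{p^{r_1}}$ has ghost number $\lceil (p^{r_1}-1)/2\rceil$.)
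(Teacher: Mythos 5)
Your proof is correct and takes essentially the same route as the paper: compute the radical length from the tensor decomposition, then detect the $\ell$-fold ghost $\theta = (g_1-1)^{n_1-1}\cdots(g_l-1)^{n_l-1}$ by restricting to the cyclic summand $H = C_{p^{r_1}}$ and reducing to the fact that $(g_1-1)^{n_1-1}$ is stably non-trivial on the $n_1$-dimensional indecomposable $kH$-module when $n_1 \leq p^{r_1}/2$. The only cosmetic difference is in the restriction step: you observe that $\theta\down_H = \theta_1 \otimes \xi$ with $\xi$ rank one and change bases of $V = M_2 \otimes \cdots \otimes M_l$ on source and target to identify the map with $\theta_1 \oplus 0 \oplus \cdots \oplus 0$, whereas the paper instead writes down explicit $kH$-maps $i : M_1 \to M\down_H$ and $r : M\down_H \to M_1$ with $r \circ \theta\down_H \circ i = \pm\theta_1$; these are two ways of saying the same thing. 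Your appeal to $\PHom(N,N)=0$ for $N = kC_{p^{r_1}}/(g_1-1)^{n_1}$ under $n_1 \leq p^{r_1}/2$ is a clean and correct way to finish (the paper cites the ghost number computation for cyclic groups, which uses the same calculation).
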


Before proving the proposition, we state the following lemma.

\begin{lem}[{\cite[Theorem~1.2]{Jennings}}]
Let $G$ be a $p$-group, and let $k$ be a field of characteristic $p$.
Then the elements $h-1$ with $h \neq 1$ form a basis for $\rad(kG)$.
It follows that the products $(h_1-1) \cdots (h_n-1)$ with $h_i \neq 1$
span $\rad^n(kG)$. \qed
\end{lem}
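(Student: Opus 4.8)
The plan is to reduce the whole statement to the classical fact that for a $p$-group $G$ and a field $k$ of characteristic $p$, the group algebra $kG$ is local with maximal ideal equal to the augmentation ideal $I = \ker(\varepsilon)$, where $\varepsilon \colon kG \to k$ is the $k$-algebra map sending $\sum_{g} a_g g$ to $\sum_g a_g$. Granting this, one has $\rad(kG) = J(kG) = I$, and everything else is linear algebra with the group-element basis $\{g : g \in G\}$ of $kG$. The easy half of the identification is immediate: since $kG/I \cong k$ is a field (indeed semisimple), $I$ is a maximal two-sided ideal, hence $J(kG) \subseteq I$. So the real content is to show that $I$ is nilpotent, which forces $I \subseteq J(kG)$ and therefore $\rad(kG)=I$.

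To prove $I$ is nilpotent I would induct on $|G|$. The case $|G|=1$ is trivial. For $|G|>1$, since $G$ is a nontrivial $p$-group its center is nontrivial, so there is a central element $z$ of order $p$; put $N = \langle z \rangle$. Because $z$ is central, $\mathfrak{n} := (z-1)\,kG$ is a two-sided ideal, and $\mathfrak{n}^p = (z-1)^p\,kG = (z^p - 1)\,kG = 0$ using characteristic $p$. Also $kG/\mathfrak{n} \cong k[G/N]$, and $I/\mathfrak{n}$ is exactly the augmentation ideal of $k[G/N]$ (since both $kG$ and $kG/\mathfrak n$ have one-dimensional augmentation quotient $k$); by the induction hypothesis this is nilpotent, so $I^s \subseteq \mathfrak{n}$ for some $s$, and hence $I^{sp} \subseteq \mathfrak{n}^p = 0$. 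This gives $I \subseteq J(kG)$, completing the identification $\rad(kG) = J(kG) = I$.

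The remaining two assertions now follow quickly. Each element $h-1$ with $h \neq 1$ lies in $I = \rad(kG)$; there are exactly $|G|-1$ such elements, and they are $k$-linearly independent because in a relation $\sum_{h \neq 1} c_h (h-1) = 0$ the coefficient of the basis element $h$ (for $h \neq 1$) is precisely $c_h$. Since $\dim_k I = |G| - \dim_k(kG/I) = |G|-1$, these elements form a basis of $\rad(kG)$. For the last claim, note that by definition of the radical series applied to the module $kG$ we have $\rad^n(kG) = J(kG)^n \cdot kG = I^n$, and $I^n$ is spanned over $k$ by products $x_1 \cdots x_n$ with each $x_i \in I$; writing each $x_i$ as a $k$-linear combination of elements $h-1$ (with $h \neq 1$) and expanding multilinearly shows that the products $(h_1-1)\cdots(h_n-1)$ with all $h_i \neq 1$ span $I^n = \rad^n(kG)$. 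The only step with genuine content is the nilpotency of the augmentation ideal (equivalently, locality of $kG$), and the induction via a central subgroup of order $p$ is the standard way to get it; the basis and spanning statements are pure bookkeeping.
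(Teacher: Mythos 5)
Your proof is correct. Note, however, that the paper offers no argument of its own for this lemma: it is stated with an immediate \qed and attributed to Jennings, so there is nothing internal to compare your proof against. What you have written is the standard, complete proof of the key underlying fact that $kG$ is local with $J(kG)$ equal to the augmentation ideal $I$: the inclusion $J(kG)\subseteq I$ from $kG/I\cong k$, and the reverse inclusion from nilpotency of $I$, proved by induction on $|G|$ via a central subgroup $N=\langle z\rangle$ of order $p$, using that $(z-1)^p=z^p-1=0$ in characteristic $p$ and that $kG/(z-1)kG\cong k[G/N]$. The dimension count and multilinear expansion that yield the basis and spanning statements are also correct. In effect you have supplied a self-contained proof of a result the authors delegate entirely to the literature, which is a legitimate (and arguably more informative) way to discharge the citation.
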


Note that it suffices to consider generators of the group $G$
when we generate $\rad^n{kG}$ as a sub-module.
We can now compute the radical length of the module $M$ and prove the proposition.

\begin{proof}[Proof of Proposition]
Let $g_i$ be a generator of $C_{p^{r_i}}$.
Then the various $g_i-1$ with $1 \leq i \leq l$ generate $\rad(kG)$.
We regard $M_i$ as the quotient $kC_{p^{r_i}}/(g_i-1)^{n_i}$, so
the elements $(g_i-1)^j$ with $0 \leq j \leq n_i-1$ form a basis of $M_i$.
Now let $m=(n_1-1) + \cdots + (n_l-1)$.
Since any $(m+1)$-fold product of the elements $g_i-1$ has to be zero in $M$,
$\rad^{m+1}(M)=0$.
On the other hand, the element
$(g_1-1)^{n_1-1} \otimes \cdots \otimes (g_l-1)^{n_l-1} \in M$
is non-zero and spans $\rad^m (M)$.
It follows that the radical length of $M$ is $m+1$.

To prove the last statement,
without loss of generality we can assume that $n_1 \leq \frac{p^{r_1}}{2}$.
We then consider the restriction of $M$ to $H=C_{p^{r_1}}$.
Note that we have a vector space isomorphism
\[M\down_H \iso \bigoplus_{i_2=0}^{n_2-1} \cdots \bigoplus_{i_l=0}^{n_l-1} M_1.\]
Since $G$ acts componentwise, this is actually an isomorphism of $kH$-modules,
and we have $kH$-maps $i: M_1 \to M\down_H$ sending $\alpha$ to
$\alpha \otimes 1 \otimes \cdots \otimes 1$ and
$r: M\down_H \to M_1$ sending $\alpha \otimes (g_2-1)^{i_2} \otimes \cdots \otimes (g_l-1)^{i_l}$ to $(-1)^{i_2 + \cdots + i_l}\alpha$ for $0 \leq i_k \leq n_k - 1$.

We can form the $m$-fold ghost $f=(g_1-1)^{n_1-1} \cdots (g_l-1)^{n_l-1}$ on $M$.
And one can check that $r \circ f\down_H \circ i$ is $\pm (g_1-1)^{n_1-1}$ on $M_1$,
which is stably non-trivial.
Hence $f$ is stably non-trivial and the ghost length of $M$ is at least $m+1$.
Since this is also the radical length of $M$, we have
$\gl(M) = \gel(M) = m+1$.
\end{proof}

\begin{rmk}
We don't know which of the lower bound and upper bound better approximates
the ghost number in general,
but we suspect that the lower bound is better.
We show in Section~\ref{ss:gn-C3xC3} that the upper bound can be refined by $1$
for rank $2$ abelian $p$-groups $C_{p^r} \times C_{p^s}$,
with $p^r, p^s \geq 3$.
In particular, the lower bound we have here is the exact ghost number
for the group $C_3 \times C_3$. 
\end{rmk}

\subsection{Ghost number of the quaternion group $Q_8$}\label{ss:gn-Q8}

In this section, we study the quaternion group
$Q_8 = \langle \epsilon, i,j \st \epsilon^2=1,\, i^2=j^2=(ij)^2=\epsilon \rangle$
over a field $k$ of characteristic 2.  
It has been shown in~\cite{Gh in rep} that 
the ghost number of $kQ_8$ is $2$, $3$, or $4$.

\begin{pro}\label{prop:Q_8}
Let $k$ be a field of characteristic $2$. 
Then there exists a stably non-trivial double ghost in $\stmod{kQ_8}$. 
Hence
\[
3 \leq \text{ghost number of } kQ_8 \leq \text{generating number of } kQ_8 \leq 4.
\] 
\end{pro}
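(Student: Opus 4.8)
The upper bound is immediate from Theorem~\ref{th:finite gl}, since the nilpotency index of $J(kQ_8)$ is $5$ (the group algebra $kQ_8$ has dimension $8$, with radical layers of dimensions $1,2,2,2,1$), so the generating number is at most $4$. Thus the real content is the lower bound: producing a stably non-trivial double ghost in $\stmod{kQ_8}$.

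The plan is to exhibit an explicit module $M$ over $kQ_8$ together with an explicit self-map (or map) $f$ that factors as a composite of two ghosts but is stably non-trivial. The natural candidate is to use Lemma~\ref{le:x-1}: the central element $\epsilon$ gives a ghost, namely multiplication by $\epsilon - 1$, on any $kQ_8$-module. So the first idea is to take $f = (\epsilon-1)^2$ on a well-chosen $M$ and show it is stably non-trivial. By Corollary~\ref{co:soc}, if this works, we automatically know $\soc^2(M) \subseteq \ker(f)$ and $\im(f) \subseteq \rad^2(M)$, which constrains the search: we want $M$ with $(\epsilon-1)^2 \neq 0$ but with $(\epsilon-1)^2$ not factoring through a projective. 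Since $\epsilon - 1 = i^2 - 1 = (i-1)(i+1) = (i-1)^2$ in characteristic $2$ (and similarly for $j$), one should think of $(\epsilon-1)^2$ as living deep in the radical, around $\rad^4$ — in fact on $kQ_8$ itself, $(\epsilon-1)^2$ spans the one-dimensional $\rad^4(kQ_8) = \soc(kQ_8)$, so on the free module this map is "multiplication into the socle" and is certainly not stably trivial considerations aside. The subtlety is that we need $M$ in $\stmod{kQ_8}$ (projective-free) where this still works.

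A cleaner approach, which I would actually pursue, is to induce up from a subgroup using Proposition~\ref{pr:induced-from-cyclic}. Take $H = \langle i \rangle \cong C_4$ inside $G = Q_8$, with the central element $x = \epsilon = i^2 \in H$, so $l = 1$ — that gives nothing. Instead we want a subgroup not containing the relevant central element. But the only central element of $Q_8$ is $\epsilon$, and $\epsilon$ lies in every nontrivial subgroup, so Proposition~\ref{pr:induced-from-cyclic} only gives $l=1$ and hence no improvement — this is exactly why $Q_8$ is hard and why the induction trick fails here. So I expect the proof to go back to an explicit construction: pick $M$ to be a suitable indecomposable $kQ_8$-module (the $kQ_8$-modules are classified, being of finite/tame type at the relevant place), perhaps $\Omega^{-1} k$ or a $2$-dimensional or $4$-dimensional module, and directly check that $(\epsilon - 1)^2$, or some explicitly built $2$-fold ghost, represents a nonzero map in $\stmod{kQ_8}$. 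Concretely one would write down the module with explicit matrices for $i-1$ and $j-1$, compute $(\epsilon-1)^2 = (i-1)^2(j-1)^2$ or the like as a matrix, and verify non-triviality by checking it does not factor through $kQ_8$ — e.g.\ by a rank or dimension argument, or by pairing against Tate cohomology in the appropriate degree.

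The main obstacle is precisely the verification of \emph{stable} non-triviality: showing the chosen map does not factor through a projective. Unlike the abelian case, where Proposition~\ref{pr:induced-from-cyclic} packages this via the retraction argument ($r \circ (x-1)^{l-1}\!\down \circ\, i$ being an isomorphism), here we have no such clean splitting, so one must argue directly with the module structure of $kQ_8$. I would expect the argument to exploit that $kQ_8$ has simple socle, so a map $M \to kQ_8 \to N$ with $M,N$ projective-free is severely constrained: any map $M \to kQ_8$ kills $\soc(M)$ and any map $kQ_8 \to N$ kills $\soc(kQ_8)$, forcing the composite to land in $\rad(N)$ and factor in a way that makes its restriction to a well-chosen subspace vanish — contradicting the explicit computation. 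Making that dimension/socle bookkeeping precise, for the specific $M$ chosen, is the crux; everything else (the upper bound, the fact that $(\epsilon-1)$ is a ghost, that ghosts compose) is already in hand from the results quoted above.
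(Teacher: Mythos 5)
The upper bound ($\leq 4$) and the general strategy (look for an explicit module and an explicit self-map, verify stable non-triviality via socle considerations) are sound, but there are two concrete errors that prevent the plan from being carried out, and the key construction is not found.

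First, the proposed candidate $(\epsilon-1)^2$ is identically zero: since $\epsilon$ has order $2$, in characteristic $2$ one has $(\epsilon-1)^2 = \epsilon^2 - 2\epsilon + 1 = 1 - 0 + 1 = 0$ as an element of $kQ_8$. (Equivalently, $(\epsilon-1)^2 = ((i-1)^2)^2$, but also $(\epsilon-1)^2 = (\epsilon+1)^2 = \epsilon^2 + 1 = 0$.) So left multiplication by powers of $\epsilon - 1$ can never produce a double ghost, and the claim in your plan that $(\epsilon-1)^2$ spans $\rad^4(kQ_8)$ is false. The element spanning $\rad^4(kQ_8)$ is $\sum_{g} g$, which does not come from any power of $\epsilon-1$. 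Since $\epsilon$ is the only nontrivial central element, left multiplication can only ever give you single ghosts by Lemma~\ref{le:x-1}, and you correctly observed that the induction trick of Proposition~\ref{pr:induced-from-cyclic} also gives $l=1$; so the route via central multiplications is genuinely closed.

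Second, the socle bookkeeping is stated in the wrong direction: it is \emph{not} true that every $kQ_8$-map $M \to kQ_8$ with $M$ projective-free kills $\soc(M)$. The inclusion $\rad(kQ_8) \hookrightarrow kQ_8$ is a counterexample. The correct statement (used elsewhere in the paper) is that any map \emph{out of} $kQ_8$ into a projective-free module kills $\soc(kQ_8)$, because it cannot be injective. The paper's actual argument for $Q_8$ is more special: it takes $M = kV := kQ_8/(\epsilon-1)kQ_8$ (the Klein four group algebra viewed as a $kQ_8$-module), uses the periodicity $\Omega kV \cong kV$ coming from the bimodule sequence $0 \to kV \xrightarrow{i} kQ_8 \to kV \to 0$ with $\im(i) = (\epsilon+1)kQ_8$, and observes that for any $kQ_8$-map $f\colon kQ_8 \to kV$ the composite $f\circ i$ vanishes because $\epsilon+1$ acts trivially on $kV$; since every map $kV \to kQ_8$ lands in $\im(i)$ (that is the annihilator of $\epsilon+1$), any map $kV \to kV$ factoring through a projective is literally zero. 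Then \emph{right} multiplication by $i+1$ and $j+1$ on $kV$ (which are well-defined left $kQ_8$-maps since $(\epsilon-1)kQ_8$ is a two-sided ideal) are checked to be ghosts, and $R_{(i+1)(j+1)}$ is a non-zero, hence stably non-trivial, double ghost. The idea you are missing is this passage to right multiplication by non-central elements, which only makes sense on modules with compatible bimodule structure such as $kV$; this is precisely what Theorem~\ref{th:rl=gel} in the paper generalizes.
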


\begin{proof}
We have a quotient map from $Q_8$ to the Klein four group $V$
that identifies $\epsilon$ with $1$. 
We also write $i$ and $j$ for the generators of $V$.
The rank one free $kV$-module can be viewed as a $kQ_8$-module,
and we write $kV$ for it. It has radical length $3$, and we will
show that it admits a stably non-trivial double ghost, hence
$\gl(kV)=\gel(kV)=3$.

Right multiplication $R_{i+1}$ on $kV$ by $i+1$ is a left $kQ_8$-map, and
we claim that it is a ghost.
To see this, consider the short exact sequence
\[ 0 \to kV \xrightarrow{\iota} kQ_8 \to kV \to 0 \]
of left $kQ_8$-modules,
where the kernel $kV$ is generated by $\epsilon+1$ in $kQ_8$. 
It follows from this sequence that $\Omega kV = kV$ and
that $\Omega R_{i+1} = R_{i+1}$.

Thus to show that $R_{i+1}$ is a ghost, we just need to check that 
it is stably trivial on maps from $k$.
Multiplication by $i+1$ kills the socle of $kV$, which is generated by $1+i+j+ij$,
so this follows from Lemma~\ref{le:soc}(a).

Next we show that there is a non-trivial double ghost.
For any map $f : kQ_8 \to kV$, the composite $f \iota$ is zero, since $\epsilon+1$
acts trivially on $kV$.
Thus a $kQ_8$-map $kV \to kV$ is stably trivial if and only if it is zero.
As a result, multiplication by $(i+1)(j+1)$ on $kV$ is stably non-trivial, 
and we get the desired double ghost.

It follows that the ghost number of $kQ_8$ is at least $3$. 
The nilpotency index of $J(kQ_8)$ is $5$, so the generating number of $kQ_8$ is at most $4$.
\end{proof}

\begin{rmk}\label{rmk:Q_8}
The map $R_{(i+1)(j+1)} = R_{1 + i + j + ij} : kV \to kV$
constructed in the proof is in fact the almost zero map with domain
$kV$ in $\stmod{kQ_8}$.
To see this, we consider the inclusion $\rad(kV) \to kV$.
Since this map is not split-epi, its composition with the almost zero map
$\gamma: kV \to kV$ factors through a projective module $P$.
But $P$ is also injective, thus we can change $\gamma$ by a map factoring through $P$
to ensure that $\rad(kV) \subseteq \ker(\gamma)$.
Since $kV/ \rad(kV) \iso \soc(kV) \iso k$ and $\soc(kV)$ is generated
by the element $1+i+j+ij$, it must be
that $R_{1 + i + j + ij}$ is the almost zero map (up to a scalar factor).
This gives another proof that this map is stably non-trivial.
%

\end{rmk}

In the next section, we generalize the technique used here.

\subsection{$p$-groups with cyclic normal subgroups}\label{ss:cyclic sub}

In Section~\ref{ss:gn-abelian}, we produced ghosts using left multiplication
by $x-1$ for abelian groups.
More generally, in Lemma~\ref{le:x-1}, we saw that left multiplication by
$x-1$ for $x$ a central element produces a ghost.
For a non-central element, in order to produce a left module map, one
must consider \emph{right} multiplication, when this makes sense,
and indeed we used this technique in Section~\ref{ss:gn-Q8} to produce
ghosts for $Q_8$.
However, it is not always true that right multiplication by $x-1$ produces ghosts.
Generalizing the known examples, we show that if $M$ is induced up
from a cyclic normal subgroup, then right multiplication by $x-1$ on $M$ is
well-defined and is a ghost.

\begin{thm}\label{th:rl=gel}
Let $C_{p^r}$ be a cyclic normal subgroup of a $p$-group $G$, and let $k$ be a field of characteristic $p$.
Let $M_n$ be an indecomposable $kC_{p^r}$-module of dimension $n$, and write $M = M_n\up^G$.
Then, for each $x \in G$, one can define the right multiplication map $R_{x-1}$ on $M$ and it is a ghost.
Moreover, if $n \leq \lceil \frac{p^r-1}{2} \rceil$, then $\gl(M)=\gel(M)=\rl M$.
\end{thm}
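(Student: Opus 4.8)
The plan is to split the statement into its two assertions and handle them in order. For the first assertion, that $R_{x-1}$ is well-defined and a ghost, I would proceed as follows. Since $C_{p^r}$ is normal in $G$, conjugation by $x$ is an automorphism of $C_{p^r}$, say sending a chosen generator $g$ to $g^a$ for some $a$ coprime to $p$. This makes $M = M_n \up^G = kG \otimes_{C_{p^r}} M_n$ into a $(kG, kC_{p^r})$-bimodule in a natural way, so right multiplication by any element of $kC_{p^r}$ — in particular by $x^{-1} x = 1$ twisted appropriately, or more directly by $x-1$ once we observe that $x$ normalizes $C_{p^r}$ so conjugation identifies the relevant coset action — gives a well-defined left $kG$-module endomorphism $R_{x-1}$ of $M$. (The precise bookkeeping is that $R_{x-1}(h \otimes m) = hx \otimes m - h \otimes m$ is forced to be $C_{p^r}$-balanced using normality; this is the routine calculation I would not grind through.) To show $R_{x-1}$ is a ghost, I would use the Tate cohomology criterion, checking it kills maps from $k$ and from $\Omega^{-1} k$ (Lemma~\ref{le:soc}): multiplication by $x-1$ lands in the radical and annihilates the socle of each free summand appearing in the restriction of $M$ to $\langle x\rangle$, much as in the $Q_8$ argument of Section~\ref{ss:gn-Q8}. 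Actually the cleanest route is to note that if $x$ is central this is Lemma~\ref{le:x-1}, and in general we reduce to the cyclic subgroup $\langle x \rangle \cap$ (its action on $M$) where $x-1$ acts like a nilpotent shift whose image is in the radical and whose kernel contains the socle; then Lemma~\ref{le:soc} applies directly.

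For the second assertion — that $\gl(M) = \gel(M) = \rl M$ when $n \leq \lceil (p^r-1)/2 \rceil$ — I would combine Theorem~\ref{th:finite gl} (which gives $\gl(M) \leq \gel(M) \leq \rl M$ always for $p$-groups) with a lower bound $\gl(M) \geq \rl M$. For the lower bound, the strategy is to exhibit an explicit $(\rl M - 1)$-fold ghost on $M$ that is stably non-trivial, built as a product of ghosts of the form $R_{x_i - 1}$ for suitable $x_i \in G$ together with the internal ghost $(g-1)^{?}$ coming from the cyclic structure of $M_n$. First I would compute $\rl M$: since $M = M_n \up^G$ and induction from a subgroup interacts predictably with the radical filtration, $\rl M$ should equal $\rl(kG/kC_{p^r}) + (n-1) = (\text{nilpotency index of } J(k[G/C_{p^r}])) + (n - 1)$, or something in this spirit (again a routine computation using the Jennings-style basis lemma quoted just before Proposition~\ref{prop:abelian module}). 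Then, paralleling the explicit construction in Theorem~\ref{th:ab} and Proposition~\ref{prop:abelian module}, I would write down a composite ghost $\theta = (g-1)^{n-1} \cdot (\text{product of } R_{x_i-1}\text{'s})$ of the right total length, and prove non-triviality by restricting to $C_{p^r}$ (or to a well-chosen subgroup), using the retraction-section trick $r \circ \theta \down \circ i = \pm (g-1)^{n-1}$ on $M_n$, which is stably non-trivial exactly because $n-1 < \lceil (p^r-1)/2\rceil$, i.e., because $(g-1)^{n-1}$ is a stably non-trivial $(n-1)$-fold ghost on $M_n$ in $\stmod{kC_{p^r}}$ (this is where the numerical hypothesis $n \leq \lceil (p^r-1)/2\rceil$ is used, and it is the same threshold governing the ghost number $d = \lceil (p^r-1)/2\rceil$ of $kC_{p^r}$).

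The main obstacle, I expect, is the non-triviality argument for the composite ghost $\theta$: unlike the abelian case, here $G$ is non-abelian and the retraction-section maps $i \colon M_n \to M \down_{C_{p^r}}$ and $r \colon M\down_{C_{p^r}} \to M_n$ must be chosen so that conjugation twisting is tracked correctly and so that $r \circ R_{x-1}\down \circ i$ genuinely acts as (a unit times) an honest internal ghost on $M_n$ rather than collapsing to zero or to a projective. I would want a clean description of $M\down_{C_{p^r}}$ as a direct sum of twists $M_n^{(j)}$ (Mackey-style), check that each $R_{x-1}$ permutes or shifts these summands compatibly, and then arrange that one composite-diagonal entry of $\theta\down$ is $\pm(g-1)^{n-1}$. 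Once that is in place, the stable non-triviality of $(g-1)^{n-1}$ on $M_n$ — which holds precisely under $n \leq \lceil (p^r-1)/2\rceil$ by the computation of the ghost number of $kC_{p^r}$ recalled in Theorem~\ref{th:ab} — finishes the proof, giving $\gl(M) \geq \rl M$ and hence equality throughout.
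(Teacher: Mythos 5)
The central idea you are missing — and the one that makes both halves of the argument work — is that one should \emph{not} try to define $R_{x-1}$ directly on $kG\otimes_{kC_{p^r}}M_n$. Your formula $R_{x-1}(h\otimes m)=hx\otimes m - h\otimes m$ is not $kC_{p^r}$-balanced: for $c\in C_{p^r}$ one gets $hcx\otimes m = hx\otimes(x^{-1}cx)m$, so well-definedness would require $x^{-1}cx$ and $c$ to act identically on $M_n$, which fails whenever $x$ does not centralize $C_{p^r}$ and $n$ is not tiny. (Concretely, for $G=D_8$, $C_{p^r}=C_4=\langle g\rangle$, $x$ of order $2$ with $xgx^{-1}=g^{-1}$, and $n=3$, the naive formula is inconsistent.) Saying $M$ is a $(kG,kC_{p^r})$-bimodule is true but does not help, since $x\notin C_{p^r}$; and your fallback of ``reducing to $\langle x\rangle$'' does not apply because $M$ is not induced from $\langle x\rangle$. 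What the paper does instead is identify $M_n$ with the submodule $(g-1)^{p^r-n}kC_{p^r}$ of $kC_{p^r}$, so that inducing up identifies $M=M_n\up^G$ with the \emph{left ideal} $kG(g-1)^{p^r-n}$ of $kG$. Normality of $C_{p^r}$ makes this a \emph{two-sided} ideal (one checks that $(g-1)^{p^r-n}x = x\cdot u\cdot(g-1)^{p^r-n}$ for a unit $u\in kC_{p^r}$), so $M$ is a $(kG,kG)$-sub-bimodule of $kG$ and literal right multiplication $R_{x-1}$ is automatically a left $kG$-map. Under the tensor description, this map is $h\otimes v\mapsto hx\otimes cv - h\otimes v$ for a unit $c\in kC_{p^r}$ depending on $x$ — not the untwisted formula.

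This same identification drives the ghost argument and the lower bound, where your plan is genuinely different and, by your own account, incomplete. For the ghost property, inducing up the exact sequence $0\to M_n\to kC_{p^r}\to M_{p^r-n}\to 0$ gives a short exact sequence of $(kG,kG)$-bimodules $0\to M\to kG\to M_{p^r-n}\up^G\to 0$; this makes $R_{x-1}$ two-periodic as a left $kG$-map, so by Lemma~\ref{le:soc} one only has to check $\soc_L(M)\subseteq\ker(R_{x-1})$ and $\mathrm{im}(R_{x-1})\subseteq\rad_L(M)$, which follow from the corresponding right-sided statements since $\soc_L$ and $\soc_R$ (and $\rad_L$, $\rad_R$) agree for $M$ a two-sided ideal in $kG$. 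Your sketch via restriction to $\langle x\rangle$ does not obviously verify the left-module conditions Lemma~\ref{le:soc} requires. For the lower bound $\gl(M)\geq\rl M$, the paper again uses the embedding $i\colon M\hookrightarrow kG$: since $n\leq p^r-n$, the element $(g-1)^{p^r-n}$ annihilates $M$ by left multiplication, so any composite $M\xrightarrow{i}kG\to M$ is zero; combined with self-injectivity of $kG$, this shows that \emph{every stably trivial self-map of $M$ is actually zero}. Then one only needs to produce a literally nonzero $(l-1)$-fold composite of right multiplications onto $\soc(M)$, which is immediate from $\soc_R(M)=\rad_R^{l-1}(M)$. This neatly sidesteps the Mackey-decomposition/retraction-section machinery you identify as your ``main obstacle''; that route would require tracking the conjugation twists through the decomposition $M\down_{C_{p^r}}$ and does not obviously close up. The numerical hypothesis $n\leq\lceil(p^r-1)/2\rceil$ enters only once, in the equivalent form $n\leq p^r-n$, to make $(g-1)^{p^r-n}$ kill $M$; you have placed it in a different (though related) role.
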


Note that for $n=1$, we have $M \iso kH \down_G$, where $H=G/C_{p^r}$ and the 
restriction is taken along the quotient map.
Thus the ghosts in the previous section are examples of this construction.

\begin{proof}
Let $g$ be a generator of $C_{p^r}$.
We can identify $M_n$ with the left submodule of $kC_{p^r}$ generated by $(g-1)^{p^r-n}$,
and so we have a short exact sequence of $kC_{p^r}$-modules:
\[ 0 \to M_n \to kC_{p^r} \to M_{p^r-n} \to 0 , \]
where $M_{p^r-n}$ is an indecomposable $kC_{p^r}$-module of dimension $p^r-n$.
Inducing up, we get
\begin{equation}\label{eq:lower bound} 
0 \to M_n\up^G \xrightarrow{\ i\ } kG \xrightarrow{\ p\ } M_{p^r-n} \up^G \to 0.
\end{equation}
The inclusion $i$ identifies $M = M_n\up^G$ with the left submodule of $kG$
generated by $(g-1)^{p^r-n}$.
Since $C_{p^r} \leq G$ is normal, this submodule is actually a sub-bimodule.
Thus the right multiplication map $R_{x-1} : M \to M$ is well-defined and is a 
left $kG$-module map, for each $x \in G$.
We must show that it is a ghost.

Since~\eqref{eq:lower bound} is in fact a short exact sequence of bimodules,
$R_{x-1}$ is two-periodic as a left $kG$-map, so
it suffices to check that $R_{x-1}$ is left stably-trivial on maps from
$k$ and $\Omega^{-1} k$. 
By Lemma~\ref{le:soc}, this is equivalent to
$\soc_L(M) \subseteq \ker(R_{x-1})$ and
$\im(R_{x-1}) \subseteq \rad_L(M)$, 
where we use subscripts to indicate left and right socles and radicals.
Clearly, $\soc_R(M) \subseteq \ker(R_{x-1})$ and
$\im(R_{x-1}) \subseteq \rad_R(M)$.
Now $\soc_L(kG)=\soc_R(kG) \iso k$, so $\soc_L(M)=\soc_R(M) \iso k$,
which gives the first inclusion.
And one can also show that $\rad_L(M)=\rad_R(M)$, which gives the second
inclusion.

To prove the last claim, let $n \leq \lceil \frac{p^r-1}{2} \rceil$ and assume that $\rl M =l$. 
We want to construct an $(l-1)$-fold ghost.
Note that $\soc_L(M)=\soc_R(M)=\rad_R^{l-1}(M)=M(g_1-1)\cdots(g_{l-1}-1)$
for some $g_1, \ldots, g_{l-1}$ in $G$, so
the $(l-1)$-fold ghost $f := R_{g_{l-1}-1}\circ \cdots \circ R_{g_1-1}$ takes $M$ onto its socle.
For any map $h : kG \to M$, the composite $h i$ is zero, since
the image of $i$ is generated by $(g-1)^{p^r-n}$ which acts trivially on $M$ since $n \leq p^r - n$.
Thus a map $M \to M$ is stably trivial if and only if it is zero,
and so our $(l-1)$-fold ghost $f$ is stably non-trivial.
Thus $l \leq \gl(M) \leq \gel(M) \leq \rl(M) = l$, and we are done.
\end{proof}

\begin{rmk}
As in Remark~\ref{rmk:Q_8}, we can also see that $f$ is non-trivial using
the theory of Auslander-Reiten triangles.
There is a canonical inclusion $j$ of $M$ into $M_{p^r-n}\up^G = \Omega M$ induced from
the $kC_{p^r}$-map $M_n \to M_{p^r-n}$, 
and one can show that the composite $j f$ is exactly the almost zero map out of $M$.
\end{rmk}

Note that any $p$-group $G$ has a non-trivial center,
hence a cyclic normal subgroup $C_p$. Applying the theorem to
the short exact sequence of groups $C_p \to G \to H$, we get

\begin{cor}\label{co:lower-bound}
Let $G$ be a $p$-group, and let $k$ be a field of characteristic $p$. Then
\[ \frac{1}{2} \rl kG \leq \gn kG \leq \gen kG < \rl kG, \]
when $p$ is even, and
\[ \frac{1}{3} \rl kG \leq \gn kG \leq \gen kG < \rl kG, \]
when $p$ is odd.
\end{cor}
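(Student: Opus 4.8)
The plan is to deduce the corollary directly from Theorem~\ref{th:rl=gel} (for the lower bound) and Theorem~\ref{th:finite gl} (for the upper bound). The upper bound $\gn kG \leq \gen kG < \rl kG$ is exactly the content of Theorem~\ref{th:finite gl}, since the nilpotency index of $J(kG)$ equals $\rl kG$; so the only real work is the lower bound.

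For the lower bound, first I would use the fact that a $p$-group $G$ has non-trivial center, so it contains a central element $x$ of order $p$, generating a central (hence normal) cyclic subgroup $C_p$. Let $H = G/C_p$. Then I would apply Theorem~\ref{th:rl=gel} with $r = 1$: take $M_n$ an indecomposable $kC_p$-module of dimension $n$ with $n \leq \lceil \frac{p-1}{2}\rceil$, and set $M = M_n \up^G$. The theorem gives $\gl(M) = \gel(M) = \rl M$, so $\gn kG \geq \rl M$. The remaining step is to estimate $\rl M = \rl(M_n\up^G)$ from below in terms of $\rl kG$. Here I would use that induction of modules interacts well with the radical filtration: $\rad^j(M_n\up^G) = (\rad^j kG)\, i(M_n) + \cdots$, or more concretely, since $M_n$ is the left submodule of $kC_p$ generated by $(x-1)^{p-n}$ and induction is $kG\otimes_{kC_p}(-)$, the module $M$ is the left ideal of $kG$ generated by $(x-1)^{p-n}$. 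Its radical length is therefore $\rl(kG) - (p-n)$: multiplying by $(x-1)^{p-n}$ shifts the radical filtration of $kG$ down by $p-n$ steps (using that $(x-1)$ is a non-zero-divisor-free generator of a length-$p$ piece, or directly from Jennings' basis theorem for $\rad^n(kG)$).

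Putting the numbers together: for $p$ even we may take $n = \lceil \frac{p-1}{2}\rceil = p/2$, giving $\rl M = \rl(kG) - p + p/2 = \rl(kG) - p/2 \geq \frac{1}{2}\rl(kG)$ (using $\rl kG \geq p$); for $p$ odd we take $n = \lceil \frac{p-1}{2}\rceil = (p-1)/2$, giving $\rl M = \rl(kG) - p + (p-1)/2 = \rl(kG) - (p+1)/2$, and one checks $\rl(kG) - (p+1)/2 \geq \frac{1}{3}\rl(kG)$ since $\rl(kG)$ is a multiple-of-$(p-1)$-plus-one quantity that is at least $p$ (indeed $\rl kG \geq p$ forces $\frac{2}{3}\rl kG \geq \frac{2p}{3} \geq \frac{p+1}{2}$ for $p \geq 3$). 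I would also note the edge case where $G = C_p$ itself so $\rl kG = p$ and $\gn kG = \lceil\frac{p-1}{2}\rceil$, which satisfies both bounds.

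The main obstacle I anticipate is the clean identification of $\rl(M_n\up^G)$, specifically that inducing up from the central $C_p$ exactly decreases radical length by $p - n$. The inequality $\rl(M_n\up^G) \geq \rl(kG) - (p-n)$ (which is all that is needed) should follow because $(x-1)^{p-n}$ generates $M$ as a left ideal and $(x-1)^{p-n} \in \rad^{p-n}(kG)\setminus\rad^{p-n+1}(kG)$ while left multiplication by elements of $\rad^j(kG)$ maps $M$ into $\rad^j(M)$ — so the top $p-n$ radical layers of $kG$ are killed but no more, using that $kG$ acts faithfully on the principal left ideal it generates. This is essentially the same computation already performed inside the proof of Theorem~\ref{th:rl=gel} when it locates $\soc_L(M) = \rad_R^{l-1}(M)$, so I would cross-reference that argument rather than repeat it.
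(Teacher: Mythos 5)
Your upper bound and choice of central $C_p$ and module $M = M_n\up^G$ with $n = \lceil \frac{p-1}{2}\rceil$ are the same as the paper's, but the way you estimate $\rl M$ from below is wrong, and the gap is not repairable along the lines you sketch.

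You claim $(x-1)^{p-n} \in \rad^{p-n}(kG)\setminus\rad^{p-n+1}(kG)$ and conclude $\rl(M_n\up^G) \geq \rl(kG) - (p-n)$. The first claim fails whenever the central element $x$ lies deep in the dimension-subgroup filtration, and then the second inequality fails too. Concretely take $G = C_9$ at $p=3$, with $t$ a generator and $T = t-1$, so $kG = k[T]/(T^9)$ and $\rl kG = 9$. The unique central subgroup of order $3$ is generated by $x = t^3$, and $x - 1 = (t-1)^3 = T^3 \in J^3$, not $J^1$. With $n = \lceil(p-1)/2\rceil = 1$, the module $M = M_1\up^G$ is the left ideal $kG \cdot T^6$, which has radical length $3$. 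Your bound would assert $\rl M \geq \rl(kG) - (p-n) = 9 - 2 = 7$, which is false, and the chain of inequalities you build on it would give $\gn kC_9 \geq 7$, whereas the actual value is $4$. So the step ``the top $p-n$ radical layers are killed but no more'' is not justified: left multiplication by $(x-1)^{p-n}$ can kill far more than $p-n$ layers.

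The paper avoids ever computing $\rl M$ by instead exploiting the short exact sequence of induced modules
\[ 0 \to M_n\up^G \to kG \to M_{p-n}\up^G \to 0, \]
which immediately yields $\rl M + \rl M_{p-n}\up^G \geq \rl kG$, and (for $p$ odd) the further sequence $0 \to M_n\up^G \to M_{n+1}\up^G \to M_1\up^G \to 0$ together with the embedding $M_1\up^G \hookrightarrow M_n\up^G$ to get $2\,\rl M \geq \rl M_{p-n}\up^G$. Combining gives $2\,\rl M \geq \rl kG$ (resp.\ $3\,\rl M \geq \rl kG$) with no reference to where $(x-1)^{p-n}$ sits in the radical filtration. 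You already invoke the short exact sequence implicitly when you identify $M$ as a principal left ideal; the fix is to use it as an inequality between the radical lengths of the three terms, rather than to try to compute one of them.
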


\begin{proof}
Choose a cyclic normal subgroup $C_p$ of $G$, and let $M = M_n\up^G$,
where $M_n$ is an indecomposable $kC_p$-module of dimension
$n = \lceil \frac{p-1}{2} \rceil$.
Since $\rl M = \gl(M) \leq \gn kG$, we only need to show that $2(\rl M) \geq \rl kG$
for $p$ even and $3(\rl M) \geq \rl kG$ for $p$ odd.
By~\eqref{eq:lower bound}, we know that
$\rl M + \rl M_{p-n}\up^G \geq \rl kG$.

For $p$ even, $p - n = n$, and so the result follows.

For $p$ odd, $p - n = n+1$.  
We will show that $2(\rl M) \geq \rl M_{n+1}\up^G$, and the corollary will follow.
There is a short exact sequence
\[0 \to M \to M_{n+1}\up^G \to M_1 \up^G \to 0,\]
induced up from $C_p$-maps,
and one sees that $M_1 \up^G$ is a submodule of $M$ again by inducing up
the $C_p$-map $k \to M_n$.
It follows that $2(\rl M) \geq \rl M + \rl M_1 \up^G \geq \rl M_{n+1}\up^G$.
\end{proof}

We expect that for odd primes, the lower bound can be improved to
an expression that is generically close to $(\rl kG)/2$.



\subsection{Ghost numbers of dihedral $2$-groups}\label{ss:gn-D4q}

Our next goal is to study the dihedral $2$-groups.
We will show that the ghost number and generating number of $kD_{4q}$
are both $q+1$.
Here we write $D_{4q}$ for the dihedral $2$-group of order $4q$, with $q$ a power of $2$:
\[ D_{4q} = \langle x,y \st x^2=y^2=1,(xy)^q=(yx)^q \rangle . \]
It has a normal cyclic subgroup $C_{2q}$, generated by $g = xy$. 

Since $kC_{2q}$ has ghost number $q$, which is realized by
the ghost length of $M = kC_{2q}/(g-1)^q$~\cite[Prop.~5.3]{Gh in rep},
the ghost length of $N=M \up_{C_{2q}}^{D_{4q}}$ is at least $q$ in $\stmod{kD_{4q}}$. 
By Theorem~\ref{th:rl=gel}, we actually have $\gl(N) = \gel(N) = \rl N$.
Note that $(xy)^q\in D_{4q}$ is central of order $2$ and that
$M \iso k \up_{C_2}^{C_{2q}}$, 
hence $N = M \up_{C_{2q}}^{D_{4q}} \iso k \up_{C_2}^{D_{4q}}
\iso kD_{2q}\down ^{D_{2q}}_{D_{4q}}$,
where the restriction is along the quotient map in the short exact sequence
$C_2 \to D_{4q} \to D_{2q}$.
It is not hard to see that the radical length of $kD_{2q}$ is $q+1$ 
(see Remark~\ref{rmk:D_n})
and that its $q$-th radical is generated by $((y-1)(x-1))^{\frac{q}{2}}=
((x-1)(y-1))^{\frac{q}{2}}$
(which makes sense for $q=1$ since we have identified $x=y$ in that case).
Thus we have proved the following consequence of Theorem~\ref{th:rl=gel}:

\begin{cor}\label{cor:D_4q lower bound}
Let $k$ be a field of characteristic $2$. 
Then the ghost number of $kD_{4q}$ is at least $q+1$.
In fact, $\gl(N) = \gel(N) = q+1$, where $N = k \up_{C_2}^{D_{4q}}$.\qed
\end{cor}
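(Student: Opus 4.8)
The plan is to derive this as a direct consequence of Theorem~\ref{th:rl=gel}, together with a short identification of $N$ and a computation of its radical length. I would take the cyclic normal subgroup $C_{2q} = \langle g \rangle$ of $D_{4q}$ with $g = xy$, and let $M = M_q$ be the (unique) indecomposable $kC_{2q}$-module of dimension $q$, so that $N = M\up_{C_{2q}}^{D_{4q}}$. The hypothesis of Theorem~\ref{th:rl=gel} reads $q \leq \lceil (2q-1)/2 \rceil = q$, which holds, so the theorem gives $\gl(N) = \gel(N) = \rl N$ at once. (As a consistency check, $M \iso kC_{2q}/(g-1)^q$ has ghost length $q$ by~\cite[Prop.~5.3]{Gh in rep}, and ghost length does not drop under induction, so $\gl(N) \geq q$ in any case; the real task is to pin down $\rl N$.)

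Next I would identify $N$ explicitly. In characteristic $2$, with $q$ a power of $2$, one has $g^q - 1 = (g-1)^q$, so the order-$2$ subgroup $C_2 = \langle g^q \rangle$ satisfies $k\up_{C_2}^{C_{2q}} \iso kC_{2q}/(g-1)^q \iso M$. By transitivity of induction, $N = M\up_{C_{2q}}^{D_{4q}} \iso k\up_{C_2}^{D_{4q}}$. Since $C_2 = \langle (xy)^q \rangle$ is precisely the central subgroup in the extension $C_2 \to D_{4q} \to D_{2q}$, inducing $k$ up from a normal subgroup yields the inflation of the regular representation of the quotient, so $N \iso kD_{2q}\down^{D_{2q}}_{D_{4q}}$.

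Finally, I would note that inflation along the surjection $\pi\colon kD_{4q} \to kD_{2q}$ preserves radical length: $\pi$ carries $J(kD_{4q})$ onto $J(kD_{2q})$, so $\rad^i_{kD_{4q}}(N) = J(kD_{2q})^i$ for every $i$, whence $\rl N$ equals the nilpotency index of $J(kD_{2q})$. That this index is $q+1$ — with its top nonzero power one-dimensional and spanned by $((y-1)(x-1))^{q/2} = ((x-1)(y-1))^{q/2}$ — is exactly the content of Remark~\ref{rmk:D_n}, which one checks using the Jennings basis for the powers of the radical together with the dihedral relation. Assembling the three steps gives $\gl(N) = \gel(N) = \rl N = q+1$, and since the ghost number of $kD_{4q}$ is the supremum of ghost lengths of modules in $\stmod{kD_{4q}}$, it is at least $q+1$. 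There is no deep obstacle once Theorem~\ref{th:rl=gel} is in hand; the only genuinely new (though elementary) input is the Loewy-length computation for $kD_{2q}$, isolated in Remark~\ref{rmk:D_n}, and one should double-check the degenerate case $q=1$, where $D_{2q} = C_2$ and the images of $x$ and $y$ coincide.
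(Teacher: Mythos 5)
Your proof follows the paper's argument essentially line for line: apply Theorem~\ref{th:rl=gel} to $M = M_q \iso kC_{2q}/(g-1)^q$ (after verifying the hypothesis $q \leq \lceil(2q-1)/2\rceil = q$), identify $N = M\up_{C_{2q}}^{D_{4q}} \iso k\up_{C_2}^{D_{4q}} \iso kD_{2q}\down_{D_{4q}}^{D_{2q}}$ via the central subgroup $\langle (xy)^q\rangle$, and then read off $\rl N = \rl kD_{2q} = q+1$ from Remark~\ref{rmk:D_n} using that inflation along $kD_{4q}\twoheadrightarrow kD_{2q}$ preserves Loewy length. This is the same route as the paper, with the only cosmetic addition being the optional consistency check $\gl(N)\geq q$ via induction.
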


The proof of Theorem~\ref{th:rl=gel} shows that an explicit $q$-fold
ghost $N \to N$ is given by the right multiplication map $R_{((x-1)(y-1))^{\frac{q}{2}}}$.
\medskip

To get upper bounds for the generating numbers of dihedral $2$-groups, 
we need classification theorems~\cite{Benson}.

Let $\Lambda=k\langle X,Y \rangle / (X^2,Y^2)$ be the quotient of 
the free algebra on two non-commuting variables. 
In $kD_{4q}$, writing $X = x-1$ and $Y = y-1$, one can show that
$(XY)^r - (YX)^r = (xy)^r - (yx)^r$ for $r$ a power of $2$, and so
$kD_{4q}\iso \Lambda/((XY)^q-(YX)^q)$~\cite[Lemma~4.11.1]{Benson}.

In the isomorphism $kD_{4q}\iso \Lambda/((XY)^q-(YX)^q)$, we have implicitly
assumed that the characteristic of $k$ is $2$.
However, for the classification we describe below, $k$ can have 
any characteristic, 
and we apply it in this generality in the next section.

$\Lambda$-modules are classifiable. 
Let $\cW$ be the set of words in the \dfn{direct letters} $a$ and $b$ 
and the \dfn{inverse letters} $a^{-1}$ and $b^{-1}$, 
such that $a$ and $a^{-1}$ are always followed by $b$ or $b^{-1}$ and vice versa, 
together with the ``\dfn{zero length word}'' $1$.

Given $C=l_1\cdots l_n\in \cW$, where each $l_i$ is a direct or inverse letter,
let $M(C)$ be the vector space over $k$ with basis 
$z_0, \ldots, z_n$ on which $\Lambda$ acts according to the schema
\[ kz_0 \xleftarrow{l_1} kz_1 \xleftarrow{l_2} kz_2 \cdots kz_{n-1} \xleftarrow{l_n} kz_n , \]
with $X$ acting via $a$ and $Y$ acting via $b$. 
For example, if $C = ab^{-1}a^{-1}$, then the schema is 
\[ kz_0 \xleftarrow{a} kz_1 \xrightarrow{b} kz_2 \xrightarrow{a} kz_3 \]
and the module $M(ab^{-1}a^{-1})$ is given by
\[
X \mapsto 
\begin{bmatrix}
0 & 0 & 0 & 0\\
1 & 0 & 0 & 0\\
0 & 0 & 0 & 1\\
0 & 0 & 0 & 0
\end{bmatrix}
\quad\text{ and }\quad
Y \mapsto 
\begin{bmatrix}
0 & 0 & 0 & 0\\
0 & 0 & 1 & 0\\
0 & 0 & 0 & 0\\
0 & 0 & 0 & 0
\end{bmatrix}
\]
with the matrices acting on row vectors on the right. 
Such a module is called a \dfn{module of the first kind}. 
Clearly, $M(C)\iso M(C^{-1})$, where $C^{-1}$ reverses the order of the letters in $C$ 
and inverts each letter.

Let $C=l_1\cdots l_n$ be a word in $\cW$ of even non-zero length that is 
not a power of a smaller word, and let $V$ be a vector space with an 
indecomposable automorphism $\phi$ on it. 
An automorphism is indecomposable if its rational canonical form has only one block, 
and the block corresponds to a power of an irreducible polynomial over $k$. 
Let $M(C,\phi)$ be the vector space $\oplus_{i=0}^{n-1}V_i$, with $V_i\iso V$, 
and let $\Lambda$ act on $M(C,\phi)$ via the schema
\[
\xymatrix@C+10pt{  
V_0 \ar@(dr,dl)[rrrrr]_{l_n=id} & V_1 \ar[l]_{l_1=\phi} &V_2 \ar[l]_{l_2=id} &\cdots\  \ar[l] & V_{n-2} \ar[l]  &V_{n-1} \ar[l]_{l_{n-1}=id}}. 
\]
Such a module is called a \dfn{module of the second kind}.
Clearly, $M(C,\phi)\iso M(C^{-1},\phi^{-1})$. 
And if $C'$ differs from $C$ by a cyclic permutation, 
say $l_1 \cdots l_n \mapsto l_n l_1 \cdots l_{n-1}$, then $M(C, \phi) \iso M(C', \phi)$. 
Moreover, if $V'$ is another vector space with an indecomposable automorphism $\phi'$,
and $V \iso V'$ via an isomorphism that commutes with $\phi$ and $\phi'$, 
then $M(C, \phi) \iso M(C', \phi')$.

\begin{thm}[{\cite[Section~4.11]{Benson}}]\label{th:class:D_n}
For any field $k$, 
the above provides a complete list of all indecomposable $\Lambda$-modules, up to isomorphism.
One of these modules has $(XY)^q-(YX)^q$ in its kernel if and only if one of the following holds:
\begin{enumerate}[(a)]
\item The module is of the first kind and the corresponding word 
does not contain $(ab)^q$, $(ba)^q$, or their inverses.

\item The module is of the second kind and no power of the corresponding word 
contains $(ab)^q$, $(ba)^q$, or their inverses.

\item The module is $M((ab)^q(ba)^{-q},id)$. 
It is a module of the second kind and is the projective indecomposable module 
for the algebra $\Lambda/((XY)^q-(YX)^q)$.

\end{enumerate}
Thus, when $k$ has characteristic $2$, a complete list of indecomposable $kD_{4q}$-modules, up to
isomorphism, consists of the $\Lambda$-modules satisfying one of these
three conditions.\qed 
\end{thm}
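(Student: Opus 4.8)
The plan is to separate Theorem~\ref{th:class:D_n} into two parts. The classification of the indecomposable $\Lambda$-modules into modules of the first kind $M(C)$ and modules of the second kind $M(C,\phi)$ --- the ``string and band'' dichotomy --- is classical and is the content of \cite[Section~4.11]{Benson}, and I would simply invoke it. Given this, the last sentence of the theorem follows formally: the text has already identified $kD_{4q}$ with $\Lambda/((XY)^q-(YX)^q)$, a $kD_{4q}$-module is exactly a $\Lambda$-module on which $(XY)^q-(YX)^q$ acts as zero, and a module is indecomposable over $\Lambda/((XY)^q-(YX)^q)$ if and only if it is indecomposable over $\Lambda$. So everything reduces to deciding, for each $M(C)$ and each $M(C,\phi)$, whether $(XY)^q-(YX)^q$ annihilates it --- that is the real work.

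The starting point is that in a string or band module the preferred basis ``diagonalizes'' the action of alternating monomials in $X$ and $Y$. For $M(C)$ with basis $z_0,\dots,z_n$, each of $Xz_i$ and $Yz_i$ is $0$ or $\pm z_{i\pm1}$, pinned down by the two letters of $C$ adjacent to $z_i$; and because $C$ alternates between $a$-letters and $b$-letters, the computation of $(XY)^q$ applied to $z_i$ either dies after one or two steps or tracks a consecutive subword of $C$ of the shape $(ba)^q$ or $(ba)^{-q}$, according to the direction of traversal, and symmetrically $(YX)^q$ tracks $(ab)^q$ or $(ab)^{-q}$. Hence $(XY)^q z_i\neq 0$ for some $i$ iff $C$ contains $(ba)^q$ or its inverse, and likewise for $(YX)^q$ and $(ab)^q$. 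Moreover $(XY)^q z_i$ and $(YX)^q z_i$ are never equal and both nonzero: the two monomials begin with different letters, so once nonzero their paths move monotonically into the two disjoint ``halves'' of the string on either side of $z_i$ and land on distinct basis vectors. Therefore $(XY)^q-(YX)^q$ kills $M(C)$ iff both monomials kill every $z_i$, i.e.\ iff $C$ avoids $(ab)^q$, $(ba)^q$ and their inverses --- condition~(a).

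For modules of the second kind I would run the same computation cyclically. A monomial may now wrap around the band $C$, each full loop contributing a factor $\phi$ or $\phi^{-1}$, so $(XY)^q$ acts nontrivially on $M(C,\phi)$ iff some power $C^\ell$, read as an ordinary word, contains $(ba)^q$ or its inverse, and similarly for $(YX)^q$; if neither this nor the $(ab)^q$-variant ever occurs, we are in condition~(b). The delicate case is when both monomials act nontrivially: after wrapping, their paths can meet at a common vertex, where $(XY)^q-(YX)^q$ acts as $\pm(\phi-\mathrm{id})$ on a block $V$. For this to vanish we need $\phi=\mathrm{id}$, which, $\phi$ being a single indecomposable block, forces $V=k$; tracing the two paths then forces $C$ to be a cyclic rotation of $(ab)^q(ba)^{-q}$. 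That is precisely the module listed in~(c); the assertion there that it is the indecomposable projective of $\Lambda/((XY)^q-(YX)^q)=kD_{4q}$ is recorded in \cite[Section~4.11]{Benson}, and is consistent with the dimension count $\dim_k M((ab)^q(ba)^{-q},\mathrm{id})=4q=|D_{4q}|$. I expect the main obstacle to be exactly this band case --- bookkeeping the powers of $\phi$ acquired on wraparound and showing that the vanishing of $(XY)^q-(YX)^q$ rigidly determines both $C$ and $\phi$; the string-module case, by contrast, is essentially formal once the monomial-action observation is in hand.
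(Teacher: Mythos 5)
The paper does not prove this theorem --- the trailing $\qed$ signals it is imported verbatim from Section~4.11 of Benson's book --- so there is no argument in the text to compare against. Your sketch is a sound reconstruction of what one would do: invoke the string-and-band classification of indecomposable $\Lambda$-modules, note that a $kD_{4q}$-module is exactly a $\Lambda$-module on which $(XY)^q-(YX)^q$ vanishes (with the same indecomposables), and decide that vanishing string-by-string and band-by-band by tracking how the alternating monomials $(XY)^q$ and $(YX)^q$ walk along the word $C$. One small slip: with the paper's conventions ($a$ acting as $X$, a direct letter $l_i=a$ giving $Xz_i=z_{i-1}$), the path of $(XY)^q$ traces subwords of shape $(ab)^q$ or $(ab)^{-q}$ while $(YX)^q$ traces $(ba)^q$ or $(ba)^{-q}$; you attached the two monomials to the wrong word-shapes, though since all four words enter condition~(a) symmetrically this is harmless. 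The genuine bookkeeping burden is the band case, which you correctly flag but leave unfinished: one must verify that when both wrapped paths survive they land on a common block $V_j$ (otherwise $(XY)^q-(YX)^q$ has components on two distinct vertices and trivially cannot vanish without both vanishing separately), and then that the resulting condition of the form $\phi^{a}-\phi^{b}=0$ together with the indecomposability of $\phi$ forces $\phi=\mathrm{id}$, $\dim V=1$, and $C$ a cyclic rotation of $(ab)^q(ba)^{-q}$, yielding exactly the module in~(c). For a result the paper itself takes on citation your level of detail is reasonable, but a self-contained proof would have to carry out that band computation rather than merely assert its outcome.
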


\begin{rmk}\label{rmk:D_n}
The fact that $kD_{4q} \iso \Lambda/((XY)^q-(YX)^q)$
yields that $kD_{4q} = M((ab)^q(ba)^{-q},id)$.
It is not hard to see from the schema of $M((ab)^q(ba)^{-q},id)$
that it has radical length $2q+1$.
Here is an illustration for $q=2$:
\begin{center}
\begin{tikzpicture}[scale=0.6]
\Dstart
\DXw
\DY
\DX
\DYw
\Dxw
\Dy
\Dx
\Dyw
\Dend 
\end{tikzpicture}.
\end{center}
The module $N = k \up_{C_2}^{D_{4q}} = kD_{4q} \otimes_{kC_2} k$ is the quotient of $kD_{4q}$
where we identify $(xy)^q$ with $1$,
in other words, $(xy)^{\frac{q}{2}}=(yx)^{\frac{q}{2}}$, for $q > 1$.
This is equivalent to $(XY)^{\frac{q}{2}}=(YX)^{\frac{q}{2}}$.
Hence $N = M((ab)^{\frac{q}{2}}(ba)^{-\frac{q}{2}},id)$
and it follows that $N$ has radical length $q+1$.
\end{rmk}

We want to prove that the generating number of $kD_{4q}$ does not exceed $q+1$.
Note that when $q=1$, the dihedral group $D_4$ is just $C_2 \times C_2$, and
the claim follows from Theorem~\ref{th:ab}, so we assume that $q \geq 2$ from now on unless otherwise stated.

Now let $M$ be an indecomposable $kD_{4q}$-module.
By Theorem~\ref{th:class:D_n}, it corresponds to a word
satisfying one of the conditions (a), (b) or (c).
Then $\soc(M)$ contains the submodule spanned by the vector spaces 
at positions of the form $b^{-1}a$ or $a^{-1}b$ (interpreted
cyclically if $M$ is of the second kind).
Such a position exists if $M$ is of the second kind since the condition
that the word is not a power of a smaller word forces the word to
contain both direct and inverse letters.
However, such positions are removed in $M/\soc(M)$, so the indecomposable summands of
$M/\soc(M)$ are of the first kind and correspond to words not containing $b^{-1}a$ or $a^{-1}b$.

Similarly, the indecomposable summands of $\rad(M)$ 
are of the first kind and correspond to words not containing $ba^{-1}$ or $ab^{-1}$.
It follows that the indecomposable summands of $\rad(M/\soc(M))$ 
are of the first kind and correspond to words not containing
$b^{-1}a$, $a^{-1}b$, $ba^{-1}$ or $ab^{-1}$.
Thus the words must consist entirely of direct or inverse letters.
But since $M(C) \iso M(C^{-1})$, we can assume that the words
only contain direct letters.
By (a), the possible words are $(ab)^{q-1}a$, $(ba)^{q-1}b$, or sub-words of these.
And we can prove
\begin{lem}\label{le:D_4q-first-kind}
Let $M$ be a $kD_{4q}$-module of the first kind, with $q \geq 2$. If $M$ corresponds to a word
that only contains direct letters, then
its generating length is less than or equal to $q$.
\end{lem}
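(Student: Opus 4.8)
First I would cut down the class of modules to be considered. Since the assignment $C \mapsto M(C)$ satisfies $M(C) \cong M(C^{-1})$ and $C^{-1}$ converts an all-direct word into an all-inverse one, and since the swap $x \leftrightarrow y$ is an automorphism of $D_{4q}$, every module covered by the lemma is isomorphic to $M(C)$ with $C$ a prefix of $(ab)^{q-1}a$. For such a word every arrow in the schema points towards $z_0$, so $M(C)$ is \emph{uniserial}, and in fact $M(C) \cong \rad^{\,2q-1-|C|}\bigl(M((ab)^{q-1}a)\bigr)$. Moreover $\soc(M(C)) \cong k$ and $M(C)/\rad(M(C)) \cong k$. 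As a first (and easy) case, if $|C|+1 \le q$ then $\gel(M(C)) \le \rl(M(C)) = |C|+1 \le q$ by Theorem~\ref{th:finite gl}, so I may assume $|C| \ge q$ from now on.

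For the remaining modules I would argue by induction, peeling off a submodule and using the refined triangle inequality of Lemma~\ref{le:gel+ar} rather than plain subadditivity. Since $M(C)$ is uniserial, every proper submodule is a radical power, so one gets short exact sequences
\[
0 \;\longrightarrow\; \rad^j(M(C)) \;\longrightarrow\; M(C) \;\longrightarrow\; M(C)/\rad^j(M(C)) \;\longrightarrow\; 0,
\]
in which both outer terms are again uniserial all-direct string modules of strictly smaller length. The resulting distinguished triangle has a connecting map $\gamma \colon M(C)/\rad^j(M(C)) \to \Sigma\,\rad^j(M(C))$, and the whole point is to show $\gamma$ lies in a power $\cG^{m}$ of the ghost ideal with $m$ as large as possible, so that Lemma~\ref{le:gel+ar} gives $\gel(M(C)) \le \max(\gel(\rad^j(M(C))) - m + l,\, l)$ with $l = \gel(M(C)/\rad^j(M(C)))$, yielding the bound $q$ after induction. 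To control $\gamma$ I would combine Corollary~\ref{co:soc} (the socle/radical behaviour of high ghosts) with the Auslander--Reiten material of Section~\ref{ss:length and AR}: for the optimal choice of $j$ the map $\gamma$ is, up to a shift and a scalar, the almost zero map out of the relevant subquotient, so by the remark following Corollary~\ref{cor:M and HM} it automatically sits in the maximal ideal power compatible with that length.

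The step I expect to be the genuine obstacle is pinning down exactly which cut makes $\gamma$ maximal in the ghost filtration, and verifying the ideal power precisely enough to land on $q$ and not $q+1$. The naive recursions are all too weak here: the universal-ghost identity only gives $\gel(M(C)) \le 1 + \gel(U_{M(C)})$, and peeling one radical layer at a time merely reproduces the radical-length bound, so one really must exploit some extra structure. The most promising source of that structure is that the longest module $M((ab)^{q-1}a)$ has dimension $2q$ and restricts to a \emph{free} module over the cyclic normal subgroup $C_{2q} = \langle xy\rangle$ (equivalently, it has complexity $1$ and is $\Omega$-periodic), and one needs to track how far this periodicity propagates to its radical layers and hence how deep the connecting maps lie. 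An alternative, perhaps cleaner, route — which I would try in parallel — is to use the classification Theorem~\ref{th:class:D_n} to identify each $M(C)$ in the lemma directly as (a shift of) a Carlson-type module, i.e.\ the fibre of a map between syzygies of $k$ determined by a cohomology class of degree about $|C|/2$; such a description places $M(C)$ inside a triangle with both other vertices in $\cF$ twisted by a controlled power of $\cG$, giving $M(C) \in \cF_q$ outright. Finally, the base cases of the induction ($|C| \le q$, including the length-$1$ words $M(a),M(b)$ with generating length $2 \le q$) are exactly the easy case handled above, so the induction closes.
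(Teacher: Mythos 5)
Your reduction to prefixes of $(ab)^{q-1}a$, the uniseriality observation, and the handling of short words ($|C|+1 \le q$) via the radical-length bound of Theorem~\ref{th:finite gl} all match the paper's first step. The divergence — and the gap — is in how the long words $q \le |C| \le 2q-1$ are treated, and your primary plan is in fact heading down a dead end. You propose to peel radical layers from $M(C)$ and control how deep the connecting map of the triangle $\rad^j \to M \to M/\rad^j$ sits in $\cG^m$, hoping the ghost filtration absorbs the excess. But the generating lengths of these uniserial modules are highly \emph{non-monotone} in dimension: the paper later shows $\gel(M((ab)^{q/2-1})) = q-1$ while $\gel(M((ab)^{q-1})) \le 2$, so the biggest all-direct module is actually one of the \emph{shortest} in $\cF$-length. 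This means the short exact sequences you want to exploit do not behave like the Auslander--Reiten triangles of Section~\ref{ss:length and AR}: their connecting maps are not almost zero maps, there is no projective class whose ideal they land deeply in, and Corollary~\ref{co:soc} alone will not push $m$ high enough. Your induction starting from small pieces and building up therefore cannot reach the bound $q$ without an extra input.

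The extra input, which the paper supplies, is an explicit syzygy computation that you gesture at in your ``alternative route'' but do not carry out. Writing $kD_{4q}=M'((ab)^q,\mathrm{id})$, one computes $\tilde{\Omega} k = M'((b^{-1}a^{-1})^{q-1}b^{-1})$ and $\tilde{\Omega}^{-2}k = M'((ab)^{q-1}ab^{-1})$, and these sit in short exact sequences
\[
0 \to k \to \tilde{\Omega}^{-2}k \to M((ab)^{q-1}a)\oplus M((ba)^{q-1}b)\to 0,
\qquad
0 \to k \to \tilde{\Omega} k \to M((ab)^{q-1})\oplus M((ba)^{q-1})\to 0,
\]
which immediately give $\gel(M((ab)^{q-1}a)) = \gel(M((ab)^{q-1}))\le 2$. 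Then, for $q/2 \le r \le q-2$, one embeds $M((ab)^ra)$ and $M((ab)^r)$ into $M((ab)^{q-1})$ and bounds the generating length by (codimension) $+\, 2$, which is $\le q$ precisely when $r\ge q/2$. So the argument is ``top down'' — pin the extremal module at $\gel\le 2$ by identifying it with a shifted syzygy of $k$ modulo $k$, then pay codimension for each step inward — rather than ``bottom up'' by peeling radicals. Your instinct that periodicity under $C_{2q}$ and proximity to syzygies of $k$ is the relevant structure is correct; the missing step is to make that concrete for $M((ab)^{q-1}a)$ and $M((ab)^{q-1})$ and then let the embedding do the rest. As written, your proposal does not close, and the step you flag as ``the genuine obstacle'' is indeed where it breaks.
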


\begin{proof}
We are going to show that
\[\gel(M((ab)^ra)) \leq q \quad\text{and}\quad
  \gel(M((ab)^r)) \leq q \]
for $0 \leq r \leq q-1$, the case of words starting with $b$ 
being similar.

Since $D_{4q}$ is a $2$-group, the generating length of a module is always no more than
its radical length, hence its dimension.
So, for any word $C$, $\gel(M(C)) \leq \dim M(C) = |C| + 1$, where $|C|$
denotes the number of letters in $C$.
Thus we are done if $r \leq q/2 - 1$.

To handle $r \geq q/2$, we temporarily introduce the following notation for
modules with symmetry under reflection when exchanging $X$ with $Y$.
For a word $u$, write $u'$ for the inverse word with all
$a$s and $b$s exchanged, so for example $(ab^{-1}ab)' = a^{-1}b^{-1}ab^{-1}$.
Write $M'(u)$ for $M(u u')$ and $M'(u, \phi)$ for $M(u u', \phi)$.
Then $kD_{4q} = M'((ab)^q, id)$, and one can see that
$\tilde{\Omega} k = M'((b^{-1}a^{-1})^{q-1}b^{-1})$ and
$\tilde{\Omega}^{-2} k = M'((ab)^{q-1}ab^{-1})$.
It follows that we have short exact sequences
\[ 0 \to k \to \tilde{\Omega}^{-2} k \to M((ab)^{q-1}a) \oplus M((ba)^{q-1}b) \to 0 \] 
and 
\[ 0 \to k \to \tilde{\Omega} k \to M((ab)^{q-1}) \oplus M((ba)^{q-1}) \to 0. \] 
Since $q \geq 2$, one sees that $\gel(M((ab)^{q-1}a)) = \gel(M((ab)^{q-1})) \leq 2$,
which handles the case $r = q-1$.

Now for $r \leq q-2$, $M((ab)^r a)$ and $M((ab)^r)$ embed in $M((ab)^{q-1})$.
Thus their ghost lengths are no more than the codimension plus two, and one
can check that this is no more than $q$ when $r \geq q/2$.
\end{proof}

In general, for a $p$-group $G$ and a $kG$-module $M$,
we know that $M/ \rad(M)$ and $\soc(M)$ are sums of trivial
modules. 
Thus $\rad(M)$ is the fibre of a map $M \to \oplus k$
and $M/\soc(M)$ is the cofibre of a map $\oplus k \to M$.
So
\[\gel(M) \leq \gel(\rad(M))+1 \quad\text{and}\quad
  \gel(M) \leq \gel(M/ \soc(M))+1. \]
Hence
\[\gel(M) \leq \gel(\rad(M/ \soc(M)))+2, \]
and so by Lemma~\ref{le:D_4q-first-kind} and the discussion preceding it,
the generating number of $kD_{4q}$ does not exceed $q+2$.
This is one more than the correct answer.
We will show in Proposition~\ref{prop:length q} that the module $M((ab)^{\frac{q}{2}-1}a)$ has length $q$,
so we can't improve this bound by improving Lemma~\ref{le:D_4q-first-kind}.

We will have to be a bit more clever in the construction to get the exact generating number.
The above process takes two steps to produce a module $\rad(M/\soc(M))$ whose summands
involve only direct letters, by removing ``top'' and ``bottom'' elements.
We next show that we can add top elements instead of removing them, with the same
effect, and as a result we will be able to do both steps at the same time.

\begin{lem}\label{le:top-and-bottom}
Let $M$ be a non-projective indecomposable module, with corresponding word $C$.
There exists a short exact sequence
\[
 0 \to M \to M' \to \oplus k \to 0, 
\]
where the indecomposable summands of $M'$ are of the first kind and
correspond to words that contain no $ab^{-1}$ or $ba^{-1}$. 
\end{lem}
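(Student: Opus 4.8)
The plan is to produce the short exact sequence
\[
0 \to M \to M' \to \oplus k \to 0
\]
by "capping off" the module $M(C)$ at its top, i.e.\ by attaching a new basis vector above each position of $M(C)$ where a peak is missing. Concretely, $M = M(C)$ has a basis $z_0,\dots,z_n$, and the "top" of $M$ (the quotient $M/\rad(M)$) is spanned by those $z_i$ which are not hit by $X$ or $Y$ from a neighbor, i.e.\ the positions $i$ such that the letters adjacent to $z_i$ both point away from $z_i$ — these are exactly the positions of the form $\dots b^{-1}a^{-1}\dots$ or the ends of the word when the adjacent letter is inverse. For each such top vector $z_i$ that is a target of a \emph{direct} letter (equivalently, whose removal would still leave an $ab^{-1}$ or $ba^{-1}$), I would adjoin one new basis vector $w_i$ lying \emph{above} $z_i$, with $X w_i$ or $Y w_i$ equal to $z_i$ (choosing the letter $a$ or $b$ so as to destroy the offending pattern), and $w_i$ killed by the other generator. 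The resulting module is $M'$; by construction $M'/M \cong \oplus k$ (one copy of $k$ for each new vector $w_i$), giving the short exact sequence, and the new letters are all \emph{direct}, so the word(s) of the summands of $M'$ contain no $ab^{-1}$ or $ba^{-1}$.

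The key steps, in order: first, recall from the classification (Theorem~\ref{th:class:D_n}) the combinatorial description of $M(C)$ via its schema, and identify precisely which positions of the schema sit at a "valley" reachable by a direct letter versus an inverse letter — these are the positions responsible for subwords $ab^{-1}$ and $ba^{-1}$ read locally. Second, at each such position attach a new top vector as above; one has to check that this really does define a $\Lambda$-module (i.e.\ that $X^2 = Y^2 = 0$ still holds, which is automatic since each $w_i$ maps to $z_i$ under exactly one generator and $z_i$ is then killed by that generator — $z_i$ is a valley, so $X z_i = Y z_i = 0$ already). Third, verify that the element $(XY)^q - (YX)^q$ still acts as zero on $M'$, so that $M'$ is genuinely a $kD_{4q}$-module: since $M'$ is built from $M$ by attaching vectors on which $XY$ and $YX$ act as zero (each $w_i$ is killed by one generator), and $M$ satisfies the relation by hypothesis, the relation propagates to $M'$. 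Fourth, decompose $M'$ into indecomposables and observe, from the classification, that each summand is of the first kind (no cyclic structure is introduced) and its word avoids $ab^{-1}$ and $ba^{-1}$ by the choice of direct letters.

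The main obstacle I expect is \textbf{bookkeeping with the two kinds of modules and the endpoint conventions}: if $M$ is of the second kind, the schema is to be read cyclically, and there is a subtlety about whether the cyclic word $C$ has a valley reachable by a direct letter "wrapping around" — one must handle the second-kind case carefully, possibly first reducing to the first-kind case by noting that the needed valleys all occur at positions where attaching a top vector simply "opens up" the cycle at that spot. A second, more mundane obstacle is making sure the offending patterns $ab^{-1}$ and $ba^{-1}$ are \emph{all} removed and that \emph{no new ones are created}: attaching a direct letter $a$ or $b$ above $z_i$ creates new adjacencies $a$-next-to-$(\text{old letter at }z_i)$, and since the old letters at a valley $z_i$ are inverse letters pointing away, the new adjacency is of the form (direct)(inverse) read as $z_i \xleftarrow{a} w_i$ with $w_i$ also adjacent to nothing else — so the new letter contributes a subword like $\dots l^{-1} a$ or $a l^{-1}\dots$ where $l^{-1}$ is inverse; one must confirm this is not of the forbidden forms $ab^{-1}$ or $ba^{-1}$, which it is not since the \emph{second} letter in those forbidden pairs is inverse whereas our new letter is direct. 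Once these checks are organized, the construction is essentially forced and the proof is short.
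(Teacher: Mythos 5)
Your construction has a genuine gap rooted in a peak/valley confusion. With the paper's conventions, the subwords $ab^{-1}$ and $ba^{-1}$ that the lemma asks you to eliminate correspond to the \emph{tops} of the schema: at such a position $z_i$ both $X z_i$ and $Y z_i$ are non-zero, while nothing maps onto $z_i$. (The socle/valley positions are $a^{-1}b$ and $b^{-1}a$, where $X z_i = Y z_i = 0$; this is exactly what the paragraph in the paper preceding the lemma records when it says $\soc(M)$ is spanned at the $a^{-1}b$ and $b^{-1}a$ positions, while $\rad(M)$ has words avoiding $ab^{-1}$ and $ba^{-1}$.) Your construction adjoins a new vector $w_i$ with, say, $X w_i = z_i$, and you justify the relation $X^2 = 0$ by asserting ``$z_i$ is a valley, so $X z_i = Y z_i = 0$.'' But the positions you must resolve are the tops, and there $X z_i \neq 0$, so $X^2 w_i = X z_i \neq 0$; the proposed $M'$ is not even a $\Lambda$-module. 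Even if one tried instead to cap a genuine valley, that vertex already receives arrows from both neighbours, so adding $w_i \to z_i$ gives $z_i$ two incoming arrows with the same label; the result is not a string module and has no reason to decompose into modules of the first kind. (You also describe the tops correctly in one sentence and then call the same positions valleys a few lines later, so the confusion propagates through the whole argument.)

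The paper's proof takes a different route: rather than adjoining a vector above a vertex, it \emph{duplicates} each interior peak by splitting the word there. Writing $C = L_1L_2$ with $L_1$ ending in $a$ and $L_2$ beginning with $b^{-1}$, set $M' = M(L_1)\oplus M(L_2)$; the embedding $M(C)\hookrightarrow M'$ sends the peak vertex $z$ to the difference $z_1 - z_2$ of its two copies and is the obvious thing elsewhere, and the cokernel is the single copy of $k$ spanned by the common image of $z_1$ and $z_2$. Iterating over all interior peaks gives $M' = \oplus_j M(L_j)$ with each $L_j$ free of interior $ab^{-1}$ and $ba^{-1}$. For the second kind (band modules) one first ``opens up'' the cycle into a direct sum of copies of the corresponding string module, with cokernel a trivially-acted-on vector space $V$, and then applies the first-kind splitting; your instinct to reduce the band case to the string case was right, but the string-case construction itself must split the word rather than cap it.
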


\begin{proof}
First suppose that $M$ is of the first kind.
If $C$ contains no $ab^{-1}$ or $ba^{-1}$, we simply set $M'$ to be $M$.
Otherwise, assume for example that $C$ contains $ab^{-1}$ and factor the word $C$ as $L_1 L_2$,
with $L_1$ ending with $a$ and $L_2$ starting with $b^{-1}$.
Write $z$ for the basis element of $M(C)$ corresponding to the vertex
connecting $L_1$ with $L_2$, and write $z_i$ for the corresponding
basis element in $M(L_i)$, $i = 1, 2$.
Then we have a short exact sequence $M \to M(L_1) \oplus M(L_2) \to k$,
where the first map takes $z$ to $z_1 - z_2$ and does 
the natural thing on the other basis elements, and
the second map takes $z_1$ and $z_2$ to $1$ in $k$ and the other basis elements to $0$.
More generally, we can write $C = L_1 L_2 \cdots L_n$, 
broken at the spots $a^{-1}b$ and $b^{-1}a$,
and set $M' = \oplus M(L_i)$.

Now suppose that $M = M(C, \phi)$ is of the second kind, where $\phi : V \to V$
is an indecomposable automorphism.
We can assume that $C = a^{-1}L\,b$ up to inverse and cyclic permutation.
Fix a basis $v_1, \ldots, v_n$ of $V$, where $n = \dim(V)$.
Let $M'' = \oplus_{i=1}^n M_i$, with each $M_i = M(C)$.
We write $w_i$ and $z_i$ for the basis elements in $M_i$ corresponding to the 
beginning and end of the word $C$.
Then we have a short exact sequence $M(C, \phi) \to M'' \to V$,
where the first map sends $v_i$ to $\phi(w_i) - z_i$ for the first vertex
and does the natural thing on the other vertices,
and the second map sends $w_i$ to $v_i$, $z_i$ to $\phi(v_i)$ and the other basis elements to $0$.
Here we regard $V$ as a module with trivial action.
Since $M''$ is a sum of modules of the first kind, we can apply the process
in the preceding paragraph to each summand to obtain a short exact sequence
$M'' \to M' \to \oplus k$, with $M'$ of the required form.
It is not hard to see that the cokernel of the composite $M \to M'' \to M'$ also has 
a trivial action,
and we are done.
\end{proof}

Note that the short exact sequence is represented by a map $\oplus \Omega k \to M$, and
this makes it possible to combine it with a map $\oplus k \to M$.

\begin{ex}
We illustrate an example for $q=2$. Write $kV$ for the module $M(a^{-1}b^{-1}ab,id_k)$:
\begin{center}
\begin{tikzpicture}[scale=0.6]
\Dstart
\DXw
\DYw 
\Dxw
\Dyw
\Dend
\end{tikzpicture}.
\end{center}
We begin by defining a cofibre sequence
\[\Omega k \to kV \to M(a^{-1}b^{-1}ab) \to k.\]
To see what the maps are, first consider the module
\begin{center}
\begin{tikzpicture}[scale=0.6]
\Dstart
\DX
\DYw
\Dxw
\Dy
\Draw{X}{post}{(-1.3,-1)}{below right=-2.5pt,font=\footnotesize}
\Dend
\end{tikzpicture}
\end{center}
which has $kV$ as a codimension 1 submodule.
We can choose a basis so that this becomes $M'=M(a^{-1}b^{-1}ab)$
\begin{Dtikz}
\Dstart
\DX
\DYw
\Dxw
\Dy
\Dend
\end{Dtikz}
and the map $M' \to k$ takes both top points to $k$ and has kernel $kV$.
Then $M'$ corresponds to a word that does not contain $ba^{-1}$ or $ab^{-1}$, and
the summands of $M'/\soc(M') \iso M(a) \oplus M(b)$ correspond to words that only contain direct letters.
Note that the map from $k$ to $\soc(M')$ factors through $kV \to M'$, so we can combine the two steps to get a cofibre sequence
\[\Omega k \oplus k \to kV \to M(a) \oplus M(b) \to k \oplus \Sigma k.\]
By Lemma~\ref{le:D_4q-first-kind}, the generating length of the third term is at most $q$,
which is $2$ in our case.
\end{ex}

Now we are ready to prove
\begin{thm}
Let $k$ be a field of characteristic $2$. 
Then the generating number of $kD_{4q}$ is at most $q+1$, for all $q \geq 1$.
\end{thm}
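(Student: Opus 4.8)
The plan is to carry out the strategy sketched in the paragraph and example preceding the theorem: to express an arbitrary indecomposable $M$ as the middle term of a cofibre sequence whose two outer terms have controlled generating length. It suffices to bound $\gel(M)$ for $M$ indecomposable and non-projective, since projectives are stably zero and $\gel$ of a direct sum is the maximum of the generating lengths of its summands. The case $q=1$ follows from Theorem~\ref{th:ab} (as $D_4\iso C_2\times C_2$), and the case $M\iso k$ is trivial, so assume throughout that $q\ge 2$ and $M\not\iso k$. I will produce a cofibre sequence
\[ (\oplus\Omega k)\oplus(\oplus k)\ \longrightarrow\ M\ \longrightarrow\ B, \]
with $B$ a direct sum of first-kind word modules all of whose words contain only direct letters. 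Granting this, $\gel\bigl((\oplus\Omega k)\oplus(\oplus k)\bigr)\le 1$, while $\gel(B)\le q$ by Lemma~\ref{le:D_4q-first-kind}, and the inequality $n\le k+l$ for the lengths in a triangle (Section~\ref{ss:length and tri}) yields $\gel(M)\le 1+q$.

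To build the cofibre sequence, apply Lemma~\ref{le:top-and-bottom}: there is a short exact sequence $0\to M\xrightarrow{v}M'\to \oplus k\to 0$ in which the indecomposable summands of $M'$ are of the first kind with words avoiding $ab^{-1}$ and $ba^{-1}$. This sequence is classified by a map $u\colon \oplus\Omega k\to M$ in $\StMod{kD_{4q}}$ fitting in a distinguished triangle $\oplus\Omega k\xrightarrow{u}M\xrightarrow{v}M'\xrightarrow{w}\oplus k$, so $M'$ is the cofibre of $u$. The key step — already used in the example, where the map from $k$ to $\soc(M')$ is observed to factor through $kV\to M'$ — is that the socle inclusion $s\colon\soc(M')\hookrightarrow M'$ factors through $v$. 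To see this: because $M\not\iso k$, every indecomposable summand of $M'$ is a word module of dimension at least $2$ (the pieces produced in Lemma~\ref{le:top-and-bottom} are never empty), so $M'$ has no trivial direct summand and hence $\soc(M')\subseteq \rad(M')$; and since the cokernel $M'/v(M)\iso \oplus k$ is semisimple, $\rad(M')\subseteq v(M)$. Therefore $\soc(M')\subseteq v(M)$, and $s$ lifts to a module map $\sigma\colon \oplus k\to M$ with $v\sigma=s$. Now consider $(u,\sigma)\colon (\oplus\Omega k)\oplus(\oplus k)\to M$. Via the octahedral axiom, the cofibre of $(u,\sigma)$ is the cofibre of the induced map $\oplus k\xrightarrow{\sigma}M\xrightarrow{v}M'$, which equals $v\sigma=s$; hence it is $B:=M'/\soc(M')$. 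This gives the required cofibre sequence.

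Finally, I would identify $B$. Passing from $M'$ to $M'/\soc(M')$ deletes exactly the sink vertices of the summands of $M'$, which for first-kind word modules sit at the positions of type $a^{-1}b$ or $b^{-1}a$ (plus the two ends); so the indecomposable summands of $B$ are first-kind word modules whose words avoid all four of $ab^{-1}$, $ba^{-1}$, $a^{-1}b$, $b^{-1}a$, and a word avoiding these four patterns consists entirely of direct letters or entirely of inverse letters — up to replacing $C$ by $C^{-1}$, only direct letters. Since $B$ is a genuine $kD_{4q}$-module, part~(a) of Theorem~\ref{th:class:D_n} forces these words to avoid $(ab)^q$ and $(ba)^q$, so they are subwords of $(ab)^{q-1}a$ or $(ba)^{q-1}b$; Lemma~\ref{le:D_4q-first-kind} gives $\gel(B)\le q$, whence $\gel(M)\le 1+q$.

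The step I expect to be the main obstacle is the verification that $s$ lifts through $v$, i.e., that ``adding tops'' (Lemma~\ref{le:top-and-bottom}) and ``quotienting by the socle'' can be combined into a single cofibre sequence; this requires unpacking the construction of Lemma~\ref{le:top-and-bottom} to rule out trivial direct summands of $M'$ — with slightly different arguments for modules of the first and second kind — and then correctly tracking the cofibre of a map out of a direct sum through the octahedral axiom. Everything else is the word combinatorics supplied by Theorem~\ref{th:class:D_n}.
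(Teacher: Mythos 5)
Your proof is correct and follows essentially the same strategy as the paper: apply Lemma~\ref{le:top-and-bottom}, quotient by the socle, combine the two steps via the octahedral axiom, and invoke Lemma~\ref{le:D_4q-first-kind}. The one difference is that you explicitly verify that the socle inclusion $\soc(M')\hookrightarrow M'$ factors through $v\colon M\to M'$ (equivalently, that the connecting map $\phi$ in the octahedron vanishes), whereas the paper avoids this by simply observing that the cofibre $W$ of \emph{any} map between direct sums of trivial modules is a sum of copies of $k$ and $\Omega^{-1}k$, hence $\gel(W)\le 1$ regardless.
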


\begin{proof}
The case when $q=1$ is dealt with in Theorem~\ref{th:ab}, so we prove the theorem for $q \geq 2$.

Let $M$ be a non-projective indecomposable module, with corresponding word $C$.
In the short exact sequence
$ M \to M' \to \oplus k $ from Lemma~\ref{le:top-and-bottom},
the indecomposable summands of $M'$ correspond to words
that contain no $ab^{-1}$ or $ba^{-1}$.
Hence the indecomposable summands of $M'' = M'/\soc(M')$ correspond to words of direct letters,
and $\gel(M'') \leq q$. 

We can form the octahedron
\[
\xymatrix @R=8pt @C=8pt{
                        &                     & (\oplus k)' \ar[dl] \ar[dr]^{\phi} &                          &          \\  
                        & M'  \ar[dd] \ar[rr] &                                    & \oplus k \ar[dd]\ar[dr]  &          \\   
\ \ M\  \ar[ur] \ar[dr] &                     &                                    &                          &  \Omega^{-1} M\\  
                        & M''  \ar[rr]\ar[dr] &                                    & W \ar[dl]\ar[ur]         &          \\  
                        &                     & \Omega^{-1} (\oplus k)'                 &                          &,}
\]
where $(\oplus k)'$ is $\soc(M')$.

The proof will be finished once we show that $\gel(W)=1$.
Here $W$ is the cofibre of a map $\phi$ between direct sums of trivial modules.
Such a map is the sum of an identity map and a zero map.
Hence $W$ is a direct sum of trivial modules $k$ and the modules $\Omega^{-1} k$,
so $\gel(W)=1$.
\end{proof}

\begin{cor}\label{co:D_4q}
Let $k$ be a field of characteristic 2.
Then the ghost number and generating number of $kD_{4q}$ are $q+1$ for all $q \geq 1$.
\qed
\end{cor}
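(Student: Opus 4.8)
The plan is to assemble this from the two bounds already in hand, using the general inequality relating the two invariants. Recall that for any finite group and any field dividing the group order one has
\[
\text{ghost number of } kG \leq \text{generating number of } kG,
\]
as noted in Section~\ref{ss:pc}: since each $(\cF_n,\cG^n)$ is a projective class and $(\cG_t)^n \subseteq \cG^n$, we get $\gl(M) \leq \gel(M)$ for every $M \in \thick k$, and taking suprema over $M$ gives the displayed inequality. So it suffices to pin down the two outer terms of the chain
\[
q+1 \;\leq\; \text{ghost number of } kD_{4q} \;\leq\; \text{generating number of } kD_{4q} \;\leq\; q+1.
\]

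First I would invoke Corollary~\ref{cor:D_4q lower bound} for the left-hand inequality: it states that the ghost number of $kD_{4q}$ is at least $q+1$ (indeed, realized by $\gl(N) = q+1$ for $N = k\up_{C_2}^{D_{4q}}$, which came out of Theorem~\ref{th:rl=gel} applied to the cyclic normal subgroup $C_{2q}$). Then I would invoke the Theorem immediately preceding this corollary for the right-hand inequality: the generating number of $kD_{4q}$ is at most $q+1$ for all $q \geq 1$. Both of these are stated for all $q$ including $q=1$ (where $D_4 \iso C_2 \times C_2$ and the claim already follows from Theorem~\ref{th:ab}), so no separate bookkeeping for small $q$ is needed.

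Combining these three facts forces equality throughout, so both the ghost number and the generating number of $kD_{4q}$ equal $q+1$. There is no real obstacle here — the content is entirely in Theorem~\ref{th:rl=gel} (for the lower bound) and in the module-theoretic analysis via the classification Theorem~\ref{th:class:D_n}, Lemma~\ref{le:D_4q-first-kind} and Lemma~\ref{le:top-and-bottom} (for the upper bound). The only thing worth flagging is that this also gives, as a byproduct, an explicit example of a module attaining the ghost number: the module $N = k\up_{C_2}^{D_{4q}}$, on which $R_{((x-1)(y-1))^{q/2}}$ is a stably nontrivial $q$-fold ghost, so that $q+1 = \gl(N) = \gel(N) = \rl(N)$.
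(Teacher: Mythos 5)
Your argument is exactly the paper's: sandwich both invariants between the lower bound on the ghost number from Corollary~\ref{cor:D_4q lower bound} and the upper bound on the generating number from the theorem immediately preceding, using the general inequality $\text{ghost number} \leq \text{generating number}$. This is precisely why the paper marks the corollary with \qed and gives no separate proof.
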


We now summarize and generalize the idea in the proof of the Theorem.
Suppose that we start building an object $Q$ from $P$, $Y$ and $Z$ by first using a triangle
\[ P \to X \to Y \to \Sigma P\]
and then using a triangle
\[ Q \to X \to Z \to \Sigma Q.\]
Then we can form the octahedron
\[
\xymatrix @R=8pt @C=10pt{
                        &                    & P \ar[dl] \ar[dr]^{\phi} &                  &          \\  
                        & X  \ar[dd] \ar[rr] &                          & Z \ar[dd]\ar[dr] &          \\   
      Q \ar[ur] \ar[dr] &                    &                          &                  &  \Sigma Q\\  
                        & Y  \ar[rr]\ar[dr]  &                          & W \ar[dl]\ar[ur] &          \\  
                        &                    & \Sigma P                 &                  &.}
\]
Assume that $P$ has length $m$, $Y$ has length $n$, and $Z$ has length $l$.
Then the length of $Q$ does not exceed $m+n+l$.
Indeed, $n+\len(W)$ bounds the length of $Q$.
For example, if $\phi$ is in $\cI^s$ for some positive integer, we have
$\len(W) \leq m+l-s$ by Lemma~\ref{le:gel+ar}.
Or, if $\phi = 0$, then $W \iso Z \oplus \Sigma P$ and
the two steps can be combined.
This is analogous to the fact in topology that
when a second cell is attached to a $CW$-complex without touching a first cell,
then they can be attached to the complex at the same time.
\medskip

We finish this section by computing the generating lengths of 
$M((ab)^r)$ and $M((ab)^r a)$, with $r \leq q/2-1$.
Note that there is a category automorphism on $\StMod{kD_{4q}}$
induced by the group automorphism on $D_{4q}$ that exchanges $x$ and $y$.
It exchanges the $a$'s and $b$'s in the word which an indecomposable module 
corresponds to and preserves the ghost projective class.
As a result,
\[\gel(M((ab)^r)) = \gel(M((ba)^r)) \text{\ \ and\ \ }
  \gel(M((ab)^r a)) = \gel(M((ba)^r b))\]
for $D_{4q}$-modules with $0 \leq r \leq q-1$.

Recall from Corollary~\ref{cor:D_4q lower bound} that the module $M=kD_{2q}$ in $\StMod{kD_{4q}}$ has its generating length equal to its radical length $q+1$.
By Proposition~\ref{prop:length of rad}, $\gel(\rad (M/ \soc(M)))=\gel(M)-2=q-1$.
Note that $M = M((ab)^{l+1}(a^{-1}b^{-1})^{l+1},id)$, where $l = q/2 - 1$, so
$\rad (M/ \soc(M)) \iso M((ab)^l) \oplus M((ba)^l)$.
Then, since exchanging $a$'s and $b$'s preserves the generating length,
\[\gel (M((ab)^l)) = \gel(M((ba)^l)) = q-1. \]
It follows that
\[\gel(M((ab)^r)=2r+1 \text{ if } r \leq l, \]
and
\[\gel(M((ab)^ra)=2(r+1) \text{ if } r \leq l-1.\]

We need to be a bit trickier to handle the module $M((ab)^l a)$.
\begin{pro}\label{prop:length q}
The $kD_{4q}$-module $M((ab)^la)$ has generating length $q$, where $l = q/2 - 1$. 
\end{pro}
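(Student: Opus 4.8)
Write $M=M((ab)^la)$, where $l=q/2-1\ge 0$. Since the word $(ab)^la$ has only direct letters, the schema of $M$ is a single descending zigzag $z_{q-1}\to z_{q-2}\to\cdots\to z_0$ whose consecutive arrows are, alternately, the actions of $x-1$ and of $y-1$; in particular $M$ is uniserial with $\dim M=\rl M=q$, so $\gel(M)\le q$ by Lemma~\ref{le:D_4q-first-kind} (or Theorem~\ref{th:finite gl}). The content of the proposition is the reverse inequality $\gel(M)\ge q$, which amounts to producing a stably non-trivial $(q-1)$-fold ghost with domain $M$.

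First I would describe the shape of such a ghost. If $f\colon M\to N$ is a $(q-1)$-fold ghost then, by Corollary~\ref{co:soc}, it kills $\soc^{q-1}(M)=\rad(M)$ (the equality because $M$ is uniserial of radical length $q$) and has image in $\rad^{q-1}(N)$; hence $f$ factors through the one-dimensional radical quotient of $M$. The natural candidate is therefore $N=M$ together with the endomorphism $\nu$ of $M$ sending the top class $z_{q-1}$ to the socle class $z_0$ and killing $\rad(M)$ (one checks $\nu$ is $kD_{4q}$-linear). Stable non-triviality of $\nu$ I would verify by restricting to the cyclic normal subgroup $C_{2q}=\langle g=xy\rangle$: since $g-1=(x-1)(y-1)+(x-1)+(y-1)$ acts on $M$ as a single nilpotent Jordan block of rank $q-1$, we get $M\down_{C_{2q}}\iso kC_{2q}/(g-1)^q$, and under this identification $\nu\down_{C_{2q}}$ becomes left multiplication by $(g-1)^{q-1}$ up to a unit. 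The latter is a stably non-trivial $(q-1)$-fold ghost over $C_{2q}$ (it realizes the ghost number $q$ of $kC_{2q}$; see~\cite{Gh in rep}), so $\nu\down_{C_{2q}}$ is stably non-trivial and hence $\nu$ is stably non-trivial over $kD_{4q}$.

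The crux, and where I expect the main difficulty, is to show that $\nu$ really is a $(q-1)$-fold ghost, rather than merely satisfying the necessary socle conditions of Lemma~\ref{le:soc}/Corollary~\ref{co:soc}. The plan is to make $\nu$ descend from a genuinely long ghost on a larger module through the classification of $kD_{4q}$-modules (Theorem~\ref{th:class:D_n}). Concretely, one can run the universal-ghost tower $M\xrightarrow{\phi_M}U_M\xrightarrow{\phi_{U_M}}U_{U_M}\to\cdots$, identify each $U$ explicitly as a sum of string/band modules using the classification, and check that the $(q-1)$-fold composite of universal ghosts is non-zero — the single non-vanishing datum coming from the $C_{2q}$-computation above forces it to survive. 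Alternatively one compares $M$ with $N=k\up_{C_2}^{D_{4q}}=kD_{2q}$ (where $C_2=\langle(xy)^q\rangle$), which has $\gel(N)=q+1$ by Corollary~\ref{cor:D_4q lower bound} and which fits, via the induction adjunction along $C_{2q}\le D_{4q}$ and the identification $kC_{2q}/(g-1)^q=k\up_{C_2}^{C_{2q}}$, in a short exact sequence $0\to M\to N\to M\to 0$; one then feeds the explicit $q$-fold ghost $R_{((x-1)(y-1))^{q/2}}$ on $N$ through this sequence, keeping track of ideal powers with Lemma~\ref{le:gel+ar}. Either way, the delicate point is upgrading the socle-level control into membership in $\cG^{q-1}$. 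Once $\gel(M)\ge q$ is in hand, combining with $\gel(M)\le q$ gives $\gel(M((ab)^la))=q$.
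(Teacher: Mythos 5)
The proposal has a genuine gap, which you flag yourself: you write down the natural candidate endomorphism $\nu$ of $M=M((ab)^l a)$ sending the top of the uniserial module to its socle, you check that $\nu$ is nonzero in $\stmod{kD_{4q}}$ by restricting to $C_{2q}$, and you note that $\nu$ satisfies the necessary conditions of Corollary~\ref{co:soc} ($\soc^{q-1}(M)\subseteq\ker\nu$, $\im\nu\subseteq\rad^{q-1}(M)$); but these conditions are far from sufficient, and you never show $\nu\in\cG^{q-1}$. Since $M$ is not induced up from $C_{2q}$ (for $q=2$, $M(a)$ has $Y$ acting trivially, whereas $k\up^{D_8}_{C_4}$ does not), Theorem~\ref{th:rl=gel} does not apply, and there is no obvious reason why $\nu$ should be a $(q-1)$-fold ghost. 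Your two escape routes do not close the gap: the ``universal ghost tower'' is not carried out, and the short exact sequence $0\to M\to N\to M\to 0$ has the wrong shape — rotating it into a triangle with $N$ in the middle only gives $q+1=\gel(N)\le 2\,\gel(M)$, i.e.\ $\gel(M)\ge(q+1)/2$, far weaker than the needed $\gel(M)\ge q$.

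The paper's proof avoids constructing any explicit long ghost out of $M$. It instead reuses the cofibre sequence
$\Omega k\oplus k\to N\to M((ab)^l a)\oplus M((ba)^l b)$ built in Lemma~\ref{le:top-and-bottom} (add a top, then kill the socle, done in one step), with $N=kD_{2q}$, $\gel(N)=q+1$ by Corollary~\ref{cor:D_4q lower bound}, sitting as the \emph{middle} term. Here the outer term $\Omega k\oplus k$ has generating length $1$, so the elementary subadditivity for triangles ($n\le k+l$, as recalled before Lemma~\ref{le:gel+ar}) gives $q+1\le 1+\gel(M((ab)^l a)\oplus M((ba)^l b))$, hence $\gel(M((ab)^l a)\oplus M((ba)^l b))\ge q$; the radical-length bound then forces equality, and the category automorphism swapping $a$ and $b$ shows the two summands have equal generating length, namely $q$. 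The lesson is the choice of triangle: you want the known long object $N$ in the middle and something of generating length $1$ on one side, not $M$ on both sides.
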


\begin{proof}
We have a triangle
\[ \Sigma k \oplus k \to M \to M((ab)^la) \oplus M((ba)^lb),\] 
where the map $\Sigma k \to M$ is a surjection.

Hence $\gel(M((ab)^la) \oplus M((ba)^lb)) \geq q$.  Since its radical length
is $q$, this must be an equality.
Then, using the symmetry again,
\[\gel(M((ab)^la)) = \gel(M((ba)^lb))=q. \qedhere \]
\end{proof}

\subsection{Ghost number of $C_{p^r} \times C_{p^s}$}\label{ss:gn-C3xC3}

Let $G = C_{p^r} \times C_{p^s}$.
In this section we show that
\[
\text{the ghost number of }kG \leq
\text{the generating number of }kG \leq
p^r+p^s-3
\]
and give the exact result when $p^r$ is $3$ or $4$.
Note that a general upper bound for the generating number for a $p$-group
is given by the radical length of $kG$ minus $1$ (Theorem~\ref{th:finite gl}).
This gives $p^r+p^s-2$ for the group $C_{p^r} \times C_{p^s}$, and
our result refines this upper bound by $1$.
To keep the indices simple, we give a detailed proof 
for the group $C_3 \times C_3$ at the prime $3$, and
we indicate how to modify the proof to cover the general case.
We are going to show that the composite of any three ghosts is stably trivial for the group $C_3 \times C_3$,
using Theorem~\ref{th:class:D_n}.

Here is an overview of our strategy.
Given a finitely generated projective-free module $N$ with radical length $n$
and an $l$-fold ghost $g: N \to N_1$ in $\Mod{kG}$,
where $N_1$ is an arbitrary projective-free module,
we can form the following commutative diagram:
\[
\xymatrix{ 
N \ar[d]^{p_1} \ar[r]^{g}    & N_1 \\
N/\rad^{n-l}(N) \ar[r]^-{p_2} & N/\soc^l(N) \ar[u]^h.}
\]
The $l$-fold ghost $g$ factors through $N / \soc^l(N)$ by Corollary~\ref{co:soc}, and
the canonical projection $N \to N / \soc^l(N)$ factors through $N / \rad^{n-l}(N)$
because $\rad^{n-l}(N) \subseteq \soc^l(N)$.
If we have good control over the modules $N / \rad^{n-l}(N)$ or
$N / \soc^l(N)$,
we can factorize a long composite of ghosts as an $l$-fold ghost $g: N \to N_1$
followed by another composite of ghosts $f : N_1 \to N_2$, and
check whether $f$ is stably trivial on $N / \rad^{n-l}(N)$ or
$N / \soc^l(N)$.
For example, we can take $l$ to be $n-1$, so that $N / \rad(N)$
is a sum of trivial modules.
Hence, if the map $f$ is a ghost, the composite $f \circ g$ is stably trivial,
and so we have reproved that the generating length of $N$ is at most its radical length $n$
(Theorem~\ref{th:finite gl}).
If we want to improve the bound, we need to choose $l$ smaller.
We will take $l = n-2$.

While we can't classify $k(C_{p^r} \times C_{p^s})$-modules, we can use
Theorem~\ref{th:class:D_n} to classify certain quotient modules.
We use that there is an isomorphism
$k(C_{p^r} \times C_{p^s}) \iso k[X,Y]/(X^{p^r}, Y^{p^s})$, where $X = x-1$ and $Y = y-1$,
and $x$ and $y$ are the generators of the cyclic summands.
Under this isomorphism, $\rad(k(C_{p^r} \times C_{p^s})) \iso (X,Y)$
and $\rad^2(k(C_{p^r} \times C_{p^s})) \iso (X^2, XY, Y^2)$.
Therefore $k(C_{p^r} \times C_{p^s})/\rad^2(k(C_{p^r} \times C_{p^s})) \iso \Lambda'$,
where $\Lambda' = \Lambda/(XY, YX) \iso k[X,Y]/(X^2,Y^2,XY)$
and $\Lambda = k\langle X,Y \rangle / (X^2,Y^2)$ is the ring from Section~\ref{ss:gn-D4q}.
Thus when $M$ is a $k(C_{p^r} \times C_{p^s})$-module, $M/\rad^2(M)$ will be a $\Lambda'$-module.
Up to isomorphism, the indecomposable $\Lambda'$-modules biject with the
$\Lambda$-modules of Theorem~\ref{th:class:D_n} satisfying
conditions (a) or (b) for $q=1$.
Condition (c) is excluded by the requirement that $XY$ be in the kernel.

Our proof will use this classification, so we will make it more explicit.
A module satisfying condition (a) is of the first kind.
If it has odd dimension, it 
is either the trivial module $k$;
the module $M((b^{-1}a)^n)$ for some positive integer $n$, 
which we say has shape ``$W$'';
or the module $M((ab^{-1})^n)$ for some positive integer $n$, 
which we say has shape ``$M$''.
For example, the ``$M$'' module $M((ab^{-1})^3)$ looks like
\begin{center}
\begin{tikzpicture}[scale=0.6]
\Dstart
\Dxw \DYw
\Dxw \DYw
\Dxw \DYw
\Dend
\end{tikzpicture}\ .
\end{center}
A module of the first kind with even dimension
is one of the above with one end removed.

One can check that a module satisfying condition (b) of Theorem~\ref{th:class:D_n}
corresponds to the word $b^{-1}a$,
up to inverse and cyclic permutation.
Recall that the additional data one needs to specify are a
vector space $V$ with an indecomposable automorphism $\phi$.
Since $\phi$ is indecomposable, one can choose a basis 
$\{ v_1, v_2, \ldots, v_m \}$ for $V$ such that $\phi(v_i) = v_{i+1}$ for $i < m$.
Thus we can view such a module as a quotient of an ``$M$'' module,
with a relation that identifies the right bottom basis element
with a linear combination of the other bottom basis elements,
as specified by $\phi(v_m)$.

We point out that this is very similar to the
classification of $kV$-modules given in~\cite[Theorem 4.3.3]{Benson},
where $k$ has characteristic $2$.

Recall that the radical length of $k(C_{p^r} \times C_{p^s})$ is $p^r+p^s-1$.
If $N$ is projective-free, then its radical length $n$ is at most $p^r+p^s-2$, so
we pick $l = p^r+p^s-4$.
Note that $N / \rad^2(N)$ and $N / \soc^l(N)$ are naturally $\Lambda'$-modules.
And we have the following lemma, which helps describe summands of $N / \soc^l(N)$.

\begin{lem}\label{le:rank 2}
Let $G = C_{p^r} \times C_{p^s}$ be an abelian $p$-group of rank $2$ with generators $x$ and $y$, respectively, and
let $k$ be a field of characteristic $p$.
Write $X=x-1$ and $Y=y-1$ in $kG$,
and let $l = p^r+p^s-4$.
Suppose $M$ is a $kG$-module containing elements $z_0$, $z_2$, and $z_4$
such that $Y z_0 - X z_2$ and $Y z_2 - X z_4$ are in $\soc^l(M)$.
If $p^s \geq 3$, then $X z_0$ and $X z_2$ are in $\soc^l(M)$.
Similarly, if $p^r \geq 3$, then $Y z_2$ and $Y z_4$ are in $\soc^l(M)$.
\end{lem}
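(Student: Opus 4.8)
The plan is to prove the two statements separately, as they are symmetric under exchanging the roles of $x$ and $y$ (and hence of $p^r$ and $p^s$), so it suffices to treat the case $p^s \geq 3$ and show $Xz_0, Xz_2 \in \soc^l(M)$. The key point is that $\soc^l(M)$ is exactly the set of elements of $M$ killed by $J^l = \rad^l(kG)$, where $J = (X,Y)$, so it suffices to show that $J^l$ annihilates $Xz_0$ and $Xz_2$; equivalently, that $X \cdot \rad^l(kG) \cdot z_0 = 0$ and similarly for $z_2$. By the Jennings-type lemma quoted above, $\rad^l(kG)$ is spanned by monomials $X^i Y^j$ with $i+j = l$ (using that $X,Y$ commute here, since $G$ is abelian). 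So I must show that every monomial $X^{i+1} Y^j z_0$ with $i+j = l$ is zero in $M$, and likewise with $z_0$ replaced by $z_2$.

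First I would record the degree bounds coming from the group: since $x$ generates $C_{p^r}$ and $y$ generates $C_{p^s}$, we have $X^{p^r} = 0$ and $Y^{p^s} = 0$ in $kG$. Thus any monomial $X^a Y^b$ with $a \geq p^r$ or $b \geq p^s$ already vanishes, and in particular kills everything. The hypotheses give us the two "ladder relations" $Yz_0 \equiv Xz_2$ and $Yz_2 \equiv Xz_4$ modulo $\soc^l(M)$. The idea is to use these relations to push a factor of $Y$ past a $z$ and convert it into a factor of $X$ on the next $z$, at the cost of landing in $J^l \cdot \soc^l(M)$; but since $\soc^l(M)$ is killed by $J^l$, any term that picks up an extra $J^l$-factor lying on top of an element of $\soc^l(M)$ is automatically zero. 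Concretely: for a monomial $X^{i+1}Y^j z_0$ with $i+j=l$, if $j \geq 1$ I would rewrite $Y z_0 = X z_2 + s$ with $s \in \soc^l(M)$, so $X^{i+1}Y^j z_0 = X^{i+2}Y^{j-1}z_2 + X^{i+1}Y^{j-1}s$. The second term is zero because $X^{i+1}Y^{j-1}$ has total degree $l$, hence lies in $\rad^l(kG)$, which annihilates $s$. So I have reduced to showing $X^{i+2}Y^{j-1}z_2 = 0$. Iterating, after at most $j$ steps I reach either a monomial in $z_2$ or $z_4$ with the $X$-exponent increased by $1$ each step, or I exhaust the $Y$-exponent. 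The bookkeeping is: starting from $X^{i+1}Y^j z_0$ with $i+j = l$, after one step I get (modulo zero) $X^{i+2}Y^{j-1} z_2$, and after a second application of the second ladder relation $X^{i+3}Y^{j-2}z_4$. The $X$-exponent on the final $z$ is then at least $i+1+(\text{number of steps})$, and I continue until either the $Y$-exponent hits $0$ or the $X$-exponent reaches $p^r$; in the latter case the monomial is zero, and in the former case I have a pure power $X^m z$ with $m = l+1 - (\text{something})$, which I then need to control directly.

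The main obstacle, and where the hypothesis $p^s \geq 3$ must be used, is the terminal case where the $Y$-exponent has been reduced to $0$ before the $X$-exponent reaches $p^r$: here I am left with a monomial $X^m z_2$ or $X^m z_4$ with $m$ roughly $l+1 - j + (\text{start})$, and I need a separate argument that this is already in $\soc^l(M)$ or outright zero. This is where the arithmetic $l = p^r + p^s - 4$ enters: with only two ladder relations available I can move $Y$-degree off $z_0$ by at most $2$ before running out of relations, so the worst case is a monomial with $Y$-exponent $\leq 2$ remaining, i.e. $X^{\geq l-1} Y^{\leq 2} z_i$; since $l - 1 = p^r + p^s - 5 \geq p^r - 2$ when $p^s \geq 3$, the $X$-exponent is already within $2$ of $p^r$, and combined with the remaining $Y$-degree and one more use of $X^{p^r}=0$ / $Y^{p^s}=0$ these terminal terms vanish. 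I expect the careful tracking of exactly how the exponents $(i,j)$ evolve under the two substitutions, and verifying that in every branch one of the three vanishing mechanisms ($X^{p^r}=0$, $Y^{p^s}=0$, or annihilation of $\soc^l(M)$ by $\rad^l kG$) applies, to be the one genuinely fiddly part of the argument; everything else is a direct unwinding of definitions together with the Jennings lemma.
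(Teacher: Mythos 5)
Your overall plan is the same as the paper's: reduce to checking that the degree-$l$ monomials $X^aY^b$ kill $Xz_0$ and $Xz_2$, use the ladder relations $Yz_0 \equiv Xz_2$ and $Yz_2 \equiv Xz_4$ (mod $\soc^l(M)$) to push one $Y$ at a time onto the next $z$, and observe that the error terms vanish because the residual coefficient still has degree $\geq l$. So far so good, and the paper's proof is exactly this, done explicitly.

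But your sketch of the ``fiddly bookkeeping'' goes wrong in a way worth correcting. You write that the problematic terminal case is when the $Y$-exponent is exhausted before the $X$-exponent reaches $p^r$, and that the worst case is ``a monomial with $Y$-exponent $\leq 2$ remaining, i.e.\ $X^{\geq l-1}Y^{\leq 2}z_i$.'' This is not what happens, and the exponent claim is false. The point you are missing is that the constraints $a+b=l=p^r+p^s-4$ together with $b \leq p^s-1$ (anything with $b\geq p^s$ is already $0$) force $a \geq p^r-3$. So there are only three monomials to test: $(a,b) \in \{(p^r{-}1,\,p^s{-}3),\ (p^r{-}2,\,p^s{-}2),\ (p^r{-}3,\,p^s{-}1)\}$ (the first needs $p^s\geq 3$, the last needs $p^r \geq 3$). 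Multiplying by the extra $X$ gives $X$-exponent $p^r$, $p^r-1$, or $p^r-2$ respectively, and after $0$, $1$, or $2$ ladder steps — never more — the $X$-exponent hits $p^r$ and the term dies. The $Y$-exponent is never exhausted; in the worst case it is still $p^s-3$ (which can be large), not $\leq 2$. Correspondingly, $p^s\geq 3$ is used not to handle some late-stage terminal case but simply to make $Y^{p^s-3}$ (and hence the first monomial, which needs no ladder step at all) well-defined; for $p^s=2$ the monomial $X^{p^r-2}$ times $Xz_0$ gives $X^{p^r-1}z_0$ and the argument genuinely fails. Once you record these three cases explicitly, the proof is three one-line computations, as in the paper.
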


Intuitively, this is saying that we cannot have a ``W''-shape in the module
$M / \soc^l(M)$.
In particular, only $k$, $M(ab^{-1})$ and $M((ab^{-1})^2)$ can appear as indecomposable summands of $M/\soc^l(M)$
if $M$ is projective-free and $p^r, p^s \geq 3$.
Note that to exclude a module like $M(a)$, one takes $z_2 = z_4 = 0$,
so the ``W'' isn't visible in this case.

\begin{proof}
Assume that $p^s \geq 3$.
To show that $X z_0 \in \soc^l(M)$, we need to show that it is
killed by $\rad^l(kG)$, which is generated by $X^{p^r-1}Y^{p^s-3}$,
$X^{p^r-2}Y^{p^r-2}$ and $X^{p^r-3}Y^{p^s-1}$ 
(where the last one is omitted if $p^r = 2$).
We compute
\begin{gather*}
X^{p^r-1} Y^{p^s-3} X z_0 = X^{p^r} Y^{p^s-3} z_0 = 0 ,\\
X^{p^r-2} Y^{p^r-2} X z_0 = X^{p^r-1} Y^{p^s-3} Y z_0 = X^{p^r} Y^{p^s-3} z_2 = 0 , \\
\intertext{and}
X^{p^r-3} Y^{p^s-1} X z_0 = X^{p^r-2} Y^{p^s-2} Y z_0
= X^{p^r-1} Y^{p^s-3} Y z_2 = X^{p^r} Y^{p^s-3} z_4 = 0 ,
\end{gather*}
where we have made used of fact that $Y z_0 - X z_2$ and $Y z_2 - X z_4$ are
killed by the generators.
Hence $X z_0 \in \soc^l(M)$.
Similarly,
\begin{gather*}
X^{p^r-1} Y^{p^s-3} X z_2 = 0, \\
X^{p^r-2} Y^{p^s-2} X z_2 = X^{p^r-1} Y^{p^s-3} Y z_2 = X^{p^r} Y^{p^r-3} z_4 = 0, \\
\intertext{and}
X^{p^r-3} Y^{p^s-1} X z_2 = X^{p^r-3} Y^{p^s-1} Y z_0 = 0.
\end{gather*}
Hence $X z_2 \in \soc^l(M)$.
The other case is symmetrical.
\end{proof}

We are now ready to prove the main theorem.

\begin{thm}\label{th:C_3xC_3}
Let $G = C_3 \times C_3$, and
let $k$ be a field of characteristic $3$.
Then the ghost number of $kG$ is $3$.
\end{thm}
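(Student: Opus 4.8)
The lower bound is immediate from Theorem~\ref{th:ab}: for $A=C_3\times C_3$ the nilpotency index of $J(kA)$ is $m=5$ and the smallest cyclic summand has order $p^r=3$, so the ghost number of $kG$ is at least $m-p^r+\lceil (p^r-1)/2\rceil = 3$. For the reverse inequality I will in fact bound the \emph{generating} number of $kG$ by $3$; since the ghost number never exceeds the generating number, this together with the lower bound forces both to equal $3$. So the goal is to show $\gel(N)\leq 3$ for every indecomposable projective-free $N\in\stmod{kG}=\thick{k}$. As $\rl(kG)=p^r+p^s-1=5$ and a projective-free module has strictly smaller radical length, $\rl(N)\leq 4$; and when $\rl(N)\leq 3$, Theorem~\ref{th:finite gl} already gives $\gel(N)\leq 3$. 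So I may assume $\rl(N)=4$.

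Fix $l=2$. The quotients $N/\rad^2(N)$ and $N/\soc^2(N)$ have radical length at most $2$, hence are modules over $\Lambda'=kG/\rad^2(kG)\iso k[X,Y]/(X^2,Y^2,XY)$, and are therefore described by the classification of Theorem~\ref{th:class:D_n} with $q=1$ (conditions (a) and (b)). Since $\rad^2(N)\subseteq\soc^2(N)$, the projection $N\to N/\soc^2(N)$ factors through $N\to N/\rad^2(N)$, which (via Corollary~\ref{co:soc}) is what lets a $2$-fold ghost out of $N$ be routed through the $\Lambda'$-module $N/\rad^2(N)$. The crucial extra input is Lemma~\ref{le:rank 2}: as $p^r=p^s=3\geq 3$, it forbids a ``$W$-shaped'' configuration in $N/\soc^2(N)$, so the indecomposable summands of $N/\soc^2(N)$ are among $k$, $M(ab^{-1})$, $M((ab^{-1})^2)$ and their even-dimensional truncations (and the corresponding second-kind modules), all of which have generating length at most $2$. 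The plan is then to rerun the induction behind $\gel(N)\leq\rl(N)$ while amalgamating its last two steps, producing a distinguished triangle $W\to N\to B$ with $\gel(W)=1$ (so $W$ is stably a sum of suspensions of $k$) and with the indecomposable summands of $B$ the radical-length-$\leq 2$ $\Lambda'$-modules just listed; then $\gel(N)\leq\gel(W)+\gel(B)\leq 1+2=3$.

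To construct this triangle I would mimic the dihedral argument of Section~\ref{ss:gn-D4q}: first enlarge $N$ by attaching trivial modules on top (the analogue of Lemma~\ref{le:top-and-bottom}) so that passing to the quotient by $\soc^2$ splits off the ``$W$-shaped'' summands -- this is where Lemma~\ref{le:rank 2} is used -- and then invoke an octahedron, as in the summary at the end of Section~\ref{ss:gn-D4q}, to amalgamate the ``attach trivials'' step with the ``divide out $\soc^2$'' step. The amalgamation succeeds provided the connecting map between the two trivial-module pieces vanishes (or, failing that, lies in a power of the ghost ideal high enough that Lemma~\ref{le:gel+ar} still yields $\gel(W)=1$), just as in the dihedral case the combined term turned out to be a sum of $k$'s and $\Omega^{-1}k$'s. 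The main obstacle is carrying this out uniformly over the classification of Theorem~\ref{th:class:D_n} -- identifying $N/\soc^2(N)$ and the auxiliary enlargement $N\to N'$ explicitly for first-kind modules, second-kind modules, and truncations, and checking in each case that the relevant map between semisimple modules can be arranged to be zero; this is the analogue of combining Lemmas~\ref{le:top-and-bottom} and~\ref{le:D_4q-first-kind} in the dihedral setting. Running the same scheme with $l=p^r+p^s-4$ in place of $l=2$ then gives the general bound $p^r+p^s-3$ on the generating number of $k(C_{p^r}\times C_{p^s})$ when $p^r,p^s\geq 3$.
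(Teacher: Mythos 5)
Your lower bound is correct and identical to the paper's (Theorem~\ref{th:ab}), and you have correctly identified the two crucial ingredients: the classification of $\Lambda'$-modules via Theorem~\ref{th:class:D_n} at $q=1$, and Lemma~\ref{le:rank 2} as the tool that excludes the ``$W$-shaped'' configurations. You have also correctly noted that the quotient $N/\rad^2(N)$ (resp.\ $N/\soc^2(N)$) is a $\Lambda'$-module. However, the overall strategy you propose for the upper bound has a genuine gap.

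Your plan is to construct an explicit triangle $W \to N' \to B$ (with $N$ a retract of $N'$, $\gel(W)=1$, $\gel(B)\leq 2$) by adapting Lemma~\ref{le:top-and-bottom} and the octahedron argument from the dihedral section. But Lemma~\ref{le:top-and-bottom} is proved by explicitly manipulating the string-module presentation of an arbitrary indecomposable $kD_{4q}$-module, and this presentation is available precisely because $kD_{4q}$ is of tame representation type and Theorem~\ref{th:class:D_n} classifies \emph{all} of its modules. The group algebra $k(C_3\times C_3)$ is of \emph{wild} representation type, so no such classification of $N$ itself exists; only the semisimple-by-semisimple quotient $N/\rad^2(N)$ is classified. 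There is therefore no analogue of Lemma~\ref{le:top-and-bottom} to construct the enlargement $N\to N'$, and the amalgamation cannot get off the ground. Moreover, even setting that aside, the fiber of $N \to N/\soc^2(N)$ is $\soc^2(N)$, which has radical length $2$ and so generating length up to $2$, not $1$; the two maps you would want to merge in the octahedron are $\oplus k \to N'$ and $\soc^2(N')\to N'$, and the latter is not a map from a sum of trivial modules, so the ``identity $\oplus$ zero'' argument that made $\gel(W)=1$ in the dihedral proof does not apply.

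The paper's proof sidesteps the need to build $N$ from triangles entirely. It takes an arbitrary composite of three ghosts $g_3 g_2 g_1 : N \to N_3$ in $\Mod{kG}$ and shows it is stably trivial directly. The double ghost $g_2 g_1$ factors through $N/\soc^2(N)$, and the projection $N\to N/\soc^2(N)$ factors through $N/\rad^2(N)$, which \emph{is} a classifiable $\Lambda'$-module. The argument then restricts to each indecomposable summand $M$ of $N/\rad^2(N)$: when $M$ is not $k$, $M(ab^{-1})$, or $M((ab^{-1})^2)$, Lemma~\ref{le:rank 2} shows the composite already factors through a sum of trivial modules, which the remaining ghost $g_3$ kills; when $M=M(ab^{-1})$ or $M=M((ab^{-1})^2)$, an explicit element chase shows the restricted composite factors through a projective (using the injective hull of $M$ and, in the latter case, the structure of $\tilde\Omega^{-2}k$). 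This is an element-level vanishing argument about composites of ghosts, not a construction of a short resolution of $N$, and that distinction is what lets the proof avoid the nonexistent classification of $k(C_3\times C_3)$-modules.
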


\begin{proof}
As above, we write $x$ and $y$ for generators of the two factors
of $C_3 \times C_3$, and let $X = x - 1$ and $Y = y - 1$.

Theorem~\ref{th:ab} gives a lower bound of $3$, so it suffices to
show that the composite of any three ghosts in $\Mod{kG}$ 
out of a finitely-generated module is stably trivial.
As we have explained, we consider the diagram
\[
\xymatrix{ 
N \ar[d]^{p_1} \ar[r]^{g_1}  & N_1 \ar[r]^{g_2}  & N_2\ar[r]^{g_3}  & N_3\\
N/\rad^2(N) \ar[rr]^{p_2} && N/\soc^2(N) \ar[u]^h,}
\]
where $g_1$, $g_2$, and $g_3$ are ghosts in $\Mod{kG}$ and
$N$, $N_1$, $N_2$, and $N_3$ are projective-free.
Note that this diagram commutes in the module category.
We will show that the composite $g_3 \circ h \circ p_2$ is stably trivial,
by restricting to each indecomposable summand $M$ of $N / \rad^2(N)$.
We divide the summands $M$ into four cases, 
and write $j$ for the inclusion map $M \to N / \rad^2(N)$.
\medskip

\noindent
\textbf{Case 1:} $M$ is not of the form $k$, $M(ab^{-1})$ or $M((ab^{-1})^2)$.

We claim that $\soc(M) \subseteq \ker (p_2 \circ j)$, hence 
$p_2 \circ j$ factors through a sum of trivial modules.
Therefore, since $g_3$ is a ghost,
the composite $g_3 \circ  h \circ p_2 \circ j$ is stably trivial.
We actually show that $p_1^{-1} j(\soc (M)) \subseteq \soc^2(N)$,
which suffices, since $p_2$ kills $\soc^2(N)$.
Observe using the classification that since $M$ is not $k$, $M(ab^{-1})$ or $M((ab^{-1})^2)$,
the elements $X(z_0)$, $X(z_2)$,  $Y(z_2)$ and $Y(z_4)$ span $\soc(M)$ as
$z_0$, $z_2$, and $z_4$ vary over elements satisfying
$Y(z_0) = X(z_2)$ and $Y(z_2) = X(z_4)$.
Suppose that we have $s \in p_1^{-1}j (\soc (M))$, say
$p_1 (s) = j (X(z_0))$ for some $z_0 \in M$ satisfying the above relations.
Since $p_1$ is surjective, we have $\tilde{z_0}$, $\tilde{z_2}$, and $\tilde{z_4} \in N$ that
project to $j(z_0)$, $j(z_2)$, and $j(z_4)$, respectively.
Then $p_1 (Y (\tilde{z_0})) = p_1 (X (\tilde{z_2}))$ and
$p_1 (Y (\tilde{z_2})) = p_1 (Y (\tilde{z_4}))$.
Since $N$ is projective-free, its radical length is at most $4$,
hence $\rad^2 (N) \subseteq \soc^{2} (N)$.
Now we can apply Lemma~\ref{le:rank 2} and see that $X (\tilde{z_0}) \in \soc^{2} (N)$.
It follows that $s \in \soc^{2} (N)$ because $p_1 (s) = p_1 (X (\tilde{z_0}))$.
The other cases when 
$p_1 (s) = j (Xz_2)$, $j (Yz_2)$, or $j (Yz_4)$ are similar.
\medskip

\noindent
\textbf{Case 2:} $M = M(ab^{-1})$.

The map $p_1$ is surjective, so $g_3 h p_2$ has its image in $\rad^3(N_3)$,
using Corollary~\ref{co:soc} and the fact that the diagram commutes in $\Mod{kG}$.
$M$ has a basis $\{ z, Xz, Yz \}$ for some $z$
and the map $g_3 h p_2$ sends $z$ to an element of the form $X^2 Y w_1 + X Y^2 w_2$.
After restriction to $M$, $g_3 h p_2$ factors through the injective module 
which is free on two generators $v_1$ and $v_2$ via the maps
sending $z$ to $X^2 Y v_1 + X Y^2 v_2$, $v_1$ to $w_1$ and $v_2$ to $w_2$.
Thus $g_3 h p_2$ is stably trivial on $M$.
\medskip

\noindent
\textbf{Case 3:} $M = M((ab^{-1})^2)$.

The module $M((ab^{-1})^2)$ has schema
$ kz_0 \xleftarrow{X} kz_1 \xrightarrow{Y} kz_2 \xleftarrow{X} kz_3 \xrightarrow{Y} kz_4 $.
By considering the injective hull of $M((ab^{-1})^2)$, which is free on three generators, we see that
a map out of it is stably trivial if it sends
$z_1$ to $XY^2 w_1 + X^2Y w_2$ and
$z_3$ to $XY^2 w_2 + X^2Y w_3$ for some elements $w_1$, $w_2$, and $w_3$.
This is equivalent to 
$z_1$ being sent to $X \alpha$ and $z_3$ being sent to $Y \alpha$ for some $\alpha$ in the $2^{nd}$ radical.

To prove that this is the case, we form the following diagram:
\[
\xymatrix@C=29pt{ 
\tilde{\Omega}^{-2}k \ar@{-->}[r]^{f} &N \ar[d]^{p_1} \ar[r]^{g_1}  & N_1 \ar[r]^{g_2}  & N_2\ar[r]^{g_3}  & N_3\\
M((ab^{-1})^2) \ar[r]^j & N/\rad^2(N) \ar[rru]^{h p_2}}.
\]
Writing $g = g_3 \circ g_2 \circ g_1$,
we will show below that we can choose $\tilde{z}_1$ and $\tilde{z}_3$ in $N$ with
\[
g(\tilde{z}_1)=g_3 h p_2 j(z_1),\quad
g(\tilde{z}_3)=g_3 h p_2 j(z_3),
\quad\text{and}\quad Y \tilde{z}_1 = X \tilde{z}_3.
\]
Since $\tilde{\Omega}^{-2}k$ is the free module on two generators $u_1$ and $u_2$
subject to the relation $Y u_1 = X u_2$,
the last displayed equality allows us to construct the dotted map $f$,
by sending the generators to $\tilde{z}_1$ and $\tilde{z}_3$, respectively.
We will now show that
\[
g(\tilde{z}_1) = X \alpha \quad\text{and}\quad g(\tilde{z}_3) = Y \alpha
\]
for some $\alpha \in \rad^2 (N_3)$. 
Since $g_1$ is a ghost, the composite $g_1 f$ is stably trivial.
It follows that, modulo $\soc^2 (N_1)$, 
$g_1(\tilde{z}_1) = X \alpha' \text{ and } g_1(\tilde{z}_3) = Y \alpha'$
for some $\alpha' \in N_1$.
Since $g_3 g_2$ is a double ghost, it kills $\soc^2 (N_1)$ and
takes $\alpha'$ into $\rad^2 (N_3)$.
Hence we can set $\alpha = g_3 g_2 (\alpha')$.

We still need to pick the $\tilde{z}_1$ and $\tilde{z}_3$.
First choose $\tilde{z}_1'$ and $\tilde{z}_3'$ in $N$ that project to $j(z_1)$ and $j(z_3)$ in $M((ab^{-1})^2)$, respectively. 
The difference $Y\tilde{z}_1'-X\tilde{z}_3'$ is in $\rad^2(N)$, 
say $Y\tilde{z}_1'-X\tilde{z}_3'=Y\beta - X\gamma$ for some $\beta$ and $\gamma \in \rad(N)$. 
We set $\tilde{z}_1=\tilde{z}_1'-\beta$ and $\tilde{z}_3=\tilde{z}_3'-\gamma$ 
so that $Y\tilde{z}_1=X\tilde{z}_3$. 
By Corollary~\ref{co:soc}, $g(\beta)=g(\gamma)=0$, hence
\[
g(\tilde{z}_1)=g(\tilde{z}_1')=g_3 h p_2 j(z_1) \quad\text{and}\quad
g(\tilde{z}_3)=g(\tilde{z}_3')=g_3 h p_2 j(z_3).
\]

\noindent
\textbf{Case 4:} $M = k$, the trivial module.

Then clearly $g_3 \circ h \circ p_2$ is stably trivial when restricted to $M$,
since $g_3$ is a ghost.
\end{proof}

Since we don't require the modules $N_1$, $N_2$, and $N_3$ to be finitely-generated in the proof,
we have actually proved a stronger result, a bound for the generating number, giving:
\begin{cor}\label{co:C_3xC_3}
Let $k$ be a field of characteristic $3$. 
Then the generating number of $k(C_3\times C_3)$ is~$3$.\qed
\end{cor}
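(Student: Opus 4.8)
The plan is to observe that the statement about the \emph{generating} number is already a by-product of the proof of Theorem~\ref{th:C_3xC_3}, so only two small verifications remain: a matching upper and lower bound for the generating number of $k(C_3 \times C_3)$.

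For the upper bound I would inspect the proof of Theorem~\ref{th:C_3xC_3} and check that finite generation of the intermediate modules $N_1$, $N_2$, $N_3$ is never used anywhere. The factorization of an $l$-fold ghost through $N/\soc^l(N)$ relies only on Corollary~\ref{co:soc}, which applies to any projective-free codomain; the reduction to the four cases depends only on the classification of the \emph{finite-dimensional} module $N/\rad^2(N)$; and the computations in Cases~2 and~3 factor the relevant maps through the injective hulls of the \emph{finite-dimensional} modules $M(ab^{-1})$ and $M((ab^{-1})^2)$, imposing no constraint on the targets. Consequently every composite of three maps in the ghost ideal $\cG$ of $\StMod{kG}$ with domain in $\thick k = \stmod{kG}$ is stably trivial, i.e.\ $\cG^3$ contains no non-trivial map out of an object of $\thick k$. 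Since $(\cF_3, \cG^3)$ is a projective class, a module $M$ satisfies $\gel(M) = \len_\cF(M) \le 3$ precisely when every map of $\cG^3$ out of $M$ is stably trivial; hence $\gel(M) \le 3$ for all $M$ in $\thick k$, and the generating number of $kG$ is at most $3$.

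For the lower bound I would appeal to Theorem~\ref{th:ab}: for $A = C_3 \times C_3$ one has $m = 1 + (3-1) + (3-1) = 5$ and the order of the smallest cyclic summand is $p^r = 3$, so the theorem gives that the ghost number of $kA$ is at least $m - p^r + \lceil (p^r-1)/2 \rceil = 3$; since the ghost number never exceeds the generating number, the generating number is at least $3$. (Equivalently one could quote Theorem~\ref{th:C_3xC_3} directly.) Combining the two bounds gives that the generating number of $k(C_3 \times C_3)$ is exactly $3$.

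The only genuine obstacle is the bookkeeping check that the proof of Theorem~\ref{th:C_3xC_3} really never invokes finite generation of the $N_i$; but since every module construction appearing there — socle, radical, injective hulls of finite-dimensional modules, and factorizations supplied by Corollary~\ref{co:soc} — is insensitive to whether the codomain is finitely generated, this check is routine, which is why the corollary needs no separate argument.
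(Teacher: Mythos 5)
Your proposal is correct and matches the paper's own argument exactly: the paper proves the corollary precisely by remarking that the proof of Theorem~\ref{th:C_3xC_3} never requires the codomains $N_1$, $N_2$, $N_3$ to be finitely generated, so it bounds the generating number and not merely the ghost number, and the lower bound comes from Theorem~\ref{th:ab}. Your itemized verification that each step (Corollary~\ref{co:soc}, the classification of the finite-dimensional quotient $N/\rad^2(N)$, and the injective-hull factorizations in Cases 2 and 3) is insensitive to finite generation of the targets is a slightly more explicit version of the paper's one-line remark, but the logic is the same.
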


\begin{rmk}\label{rmk:rank 2}
The arguments in this section go through for the group $G = C_{p^r} \times C_{p^s}$
with $2 < p^r \leq p^s$, and we get that
the generating number of $kG$ is less than or equal to $p^r+p^s-3$.
Theorem~\ref{th:ab} gives a lower bound of 
$\left\lceil \frac{p^r-1}{2} \right\rceil + p^s - 1$.
In particular, if $p^r=3$, the ghost number of $kG$ is $p^s$, and
if $p^r=4$, the ghost number of $kG$ is $p^s+1$.

We now indicate the modifications needed in the proof of the general case.
Instead of $g_2$ being a ghost, we take it to be a $(p^r+p^s-5)$-fold ghost.
Then the map $h$ has domain $N / \soc^{p^r+p^s-4} (N)$.
In Case 1, one checks that $p_1^{-1}j(\soc (M)) \subseteq \soc^{p^r+p^s-4} (N)$.
In Case 2, the map $g_3 h p_2$ sends $z \in M(ab^{-1})$ to an element of the form $X^{p^r-1} Y^{p^s-2} w_1 + X^{p^r-2} Y^{p^s-1} w_2$.
In Case 3, a map out of $M((ab^{-1})^2)$ is stably trivial if it sends
$z_1$ to $X \alpha$ and
$z_3$ to $Y \alpha$ for some $\alpha$ in the $(p^r+p^s-4)^{th}$ radical.
Case 4 is unchanged.
\end{rmk}

\subsection{Possible ghost numbers for group algebras}\label{ss:possible}

In this section, we classify group algebras with certain small ghost numbers,
and also put constraints on which ghost numbers can occur.
Whenever we write $kG$, $k$ can be any field whose characteristic divides
the order of $G$.

In~\cite{Gh in rep} it is shown that the abelian groups $G$ such that
the ghost number of $kG$ is $2$ are $C_4$, $C_2 \times C_2$ and $C_5$.
The results of the previous section and Theorem~\ref{th:ab} give
a complete list of abelian $p$-groups of ghost number $3$:

\begin{pro}\label{pr:abelian-gn-3}
Let $G$ be an abelian $p$-group. Then the ghost number of $kG$ is $3$ if and only if
$G$ is $C_7$, $C_3 \times C_3$, or $C_2 \times C_2 \times C_2$ if and only if
the generating number of $kG$ is $3$. \qed
\end{pro}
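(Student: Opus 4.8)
The plan is to derive this entirely from the computations already in hand, treating it as a bookkeeping corollary of Theorems~\ref{th:bcccm}, \ref{th:ab} and~\ref{th:C_3xC_3}. First I would record what is already known: by Theorem~\ref{th:bcccm} the abelian $p$-groups with ghost number $1$ are exactly $C_2$ and $C_3$, and by~\cite{Gh in rep} (quoted just above the proposition) those with ghost number $2$ are exactly $C_4$, $C_2\times C_2$ and $C_5$; moreover for each of these five the generating number also equals the ghost number, either because the cohomology is periodic (the cyclic cases) or because $C_2\times C_2=D_4$ and Corollary~\ref{co:D_4q} with $q=1$ applies (alternatively, generating number $1$ is equivalent to ghost number $1$). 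Thus it suffices to determine which of the \emph{remaining} abelian $p$-groups have ghost number $3$, and to check that each of those also has generating number $3$.

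Next I would verify the three groups in the statement. For $C_7$, which is cyclic and hence has periodic Tate cohomology, the ghost and generating numbers coincide and equal $\lceil(7-1)/2\rceil=3$ by \cite[Thm.~5.4]{Gh in rep}. For $C_3\times C_3$ this is precisely Theorem~\ref{th:C_3xC_3} together with Corollary~\ref{co:C_3xC_3}. For $C_2\times C_2\times C_2$, Theorem~\ref{th:ab} applies with nilpotency index $m=4$ and smallest cyclic summand of order $p^r=2$, yielding $3=m-p^r+\lceil(p^r-1)/2\rceil\le\text{ghost number}\le\text{generating number}\le m-1=3$.

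The main part is the converse: every abelian $p$-group $G$ not among the eight groups listed so far (the five of ghost number $\le 2$ together with $C_7$, $C_3\times C_3$, $C_2^3$) has ghost number at least $4$, hence generating number at least $4$ as well since the ghost number bounds it from below. I would split by the number of cyclic summands. If $G=C_{p^r}$ is cyclic, then $p^r\notin\{2,3,4,5,7\}$ and $6$ is not a prime power, so $p^r\ge 8$ and the ghost number $\lceil(p^r-1)/2\rceil$ is $\ge 4$. If $G=C_{p^{r_0}}\times\cdots\times C_{p^{r_l}}$ with $l\ge 1$ and $C_{p^{r_0}}$ the smallest summand, Theorem~\ref{th:ab} gives the lower bound $\lceil(p^{r_0}-1)/2\rceil+\sum_{i=1}^{l}(p^{r_i}-1)$; since every summand of a $p$-group has order $\ge 2$, each term here is a positive integer, so the bound is $\ge l+1$, which already handles $l\ge 3$. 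For $l=2$ the bound is an integer $\ge 3$, and it equals $3$ only when all three terms equal $1$, which forces $p^{r_0}\le 3$ and $p^{r_1}=p^{r_2}=2$, hence $G=C_2^3$ (excluded); otherwise the bound is $\ge 4$. For $l=1$, writing $G=C_{p^r}\times C_{p^s}$ with $p^r\le p^s$, the bound $\lceil(p^r-1)/2\rceil+(p^s-1)$ is $\ge 4$ as soon as $p^s\ge 4$, and the two cases $p^s\le 3$ give only $G=C_2\times C_2$ and $G=C_3\times C_3$, both already accounted for. Combining, the three stated groups are exactly those of ghost number $3$, and in each the generating number is likewise $3$; while every other abelian $p$-group has ghost number (and generating number) either $\le 2$ or $\ge 4$, giving the last ``if and only if''.

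I do not expect a genuine obstacle here: the only point requiring care is the case enumeration in the last paragraph, namely confirming that the inequality $\lceil(p^{r_0}-1)/2\rceil+\sum_{i\ge 1}(p^{r_i}-1)\ge 4$ holds in every residual case and that the finitely many groups where equality with $3$ (or a smaller value) occurs are precisely the ones already on the list. All the substantive input is supplied by Theorems~\ref{th:ab}, \ref{th:C_3xC_3} and~\ref{th:bcccm}, Corollary~\ref{co:C_3xC_3}, and the cyclic-group computation of~\cite{Gh in rep}.
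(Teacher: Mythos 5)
Your proof is correct and takes essentially the same route as the paper, which records this proposition as an immediate bookkeeping consequence of Theorem~\ref{th:ab}, Theorem~\ref{th:C_3xC_3}, Corollary~\ref{co:C_3xC_3}, the cyclic-group formula from~\cite{Gh in rep}, and the earlier ghost-number-$\le 2$ classification; your write-up simply makes the case enumeration explicit, and the arithmetic in each case checks out.
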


Below we will extend this to non-abelian $p$-groups, with one ambiguous group.
We first recall a consequence of Jennings' formula which will also be useful
in studying the gaps
in the possible ghost numbers.

\begin{lem}[{\cite[Thm.~3.14.6]{Benson}}]\label{le:Jen}
Let $k$ be a field of characteristic $p$. 
If $G$ is a group of order $p^r\!$, then
\[ \text{nilpotency index of } J(k(C_{\!p}^r)) \leq
   \text{nilpotency index of } J(kG)  \leq
   \text{nilpotency index of } J(k(C_{p^r})) . \xqedhere{17.7pt}\]
\end{lem}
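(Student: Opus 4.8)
The plan is to prove the two inequalities separately, each by induction on $r$. The base cases reduce to direct computations: in characteristic $p$ one has $kC_{p^r}\cong k[t]/(t^{p^r})$, so $J(kC_{p^r})$ has nilpotency index exactly $p^r$, and $k(C_p^r)\cong k[t_1,\dots,t_r]/(t_1^p,\dots,t_r^p)$, so $J(k(C_p^r))$ has nilpotency index exactly $1+r(p-1)$. Thus it suffices to show that for an arbitrary $p$-group $G$ of order $p^r$ the nilpotency index of $J(kG)$ lies between $1+r(p-1)$ and $p^r$. Throughout I use that $kG$ is local with $J(kG)$ equal to the augmentation ideal, and that the nilpotency index of $J(kG)$ equals the radical length of $kG$.

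For the \emph{upper bound}, I would choose a central subgroup $Z=\langle z\rangle\le G$ of order $p$ and consider the surjection $kG\to k[G/Z]$, whose kernel is the two-sided ideal $(z-1)kG$; since $z$ is central of order $p$ and $\mathrm{char}\,k=p$, we have $(z-1)^p=z^p-1=0$. Writing $m$ for the nilpotency index of $J(k[G/Z])$ and using that $J(kG)$ maps onto $J(k[G/Z])$, we get $J(kG)^m\subseteq(z-1)kG$; then, since $(z-1)$ is central and $J(kG)^m$ is an ideal, an easy induction on $j$ gives $J(kG)^{jm}\subseteq(z-1)^jkG$, so $J(kG)^{pm}\subseteq(z-1)^pkG=0$. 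Hence $J(kG)$ has nilpotency index at most $pm$, and as $|G/Z|=p^{r-1}$ the inductive hypothesis gives $m\le p^{r-1}$, finishing the upper bound.

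For the \emph{lower bound}, I would show that the norm element $N=\sum_{g\in G}g$, which is visibly nonzero, lies in $J(kG)^{r(p-1)}$; this forces $J(kG)^{r(p-1)}\ne 0$, hence the nilpotency index to be at least $r(p-1)+1$. To factor $N$, choose a chain of subgroups $1=G_0<G_1<\cdots<G_r=G$ with $[G_i:G_{i-1}]=p$, pick $g_i\in G_i\setminus G_{i-1}$, and use that every element of $G$ is uniquely $g_r^{a_r}g_{r-1}^{a_{r-1}}\cdots g_1^{a_1}$ with $0\le a_i<p$; the distributive law then gives $N=\bigl(\sum_{a=0}^{p-1}g_r^a\bigr)\cdots\bigl(\sum_{a=0}^{p-1}g_1^a\bigr)$ in $kG$. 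Finally, in characteristic $p$ one has $\sum_{a=0}^{p-1}t^a=(t-1)^{p-1}$ (expand and use $\binom{p-1}{k}\equiv(-1)^k\pmod p$), so with $t=g_i$ each factor equals $(g_i-1)^{p-1}\in J(kG)^{p-1}$, and therefore $N\in J(kG)^{r(p-1)}$.

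I do not expect a real obstacle; the two points needing care are the noncommutative expansion $N=\prod_i\bigl(\sum_a g_i^a\bigr)$ (which rests only on the subgroup-chain normal form and distributivity, so is unaffected by noncommutativity) and the characteristic-$p$ identity for $\sum_a t^a$. I would also note that both inequalities can instead be read off from Jennings' formula $1+(p-1)\sum_{i\ge1}i\,d_i$ for the radical length of $kG$, where $d_i=\dim_{\F_p}\bigl(\kappa_i(G)/\kappa_{i+1}(G)\bigr)$ for the Jennings series and $\sum_i d_i=r$ (see \cite[Thm.~3.14.6]{Benson}): the lower bound is then immediate from $i\ge 1$, whereas the upper bound there would require the structural relations among the $\kappa_i$, which is precisely the step the elementary reduction through a central $C_p$ avoids.
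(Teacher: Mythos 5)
Your proof is correct, and it is a genuinely different (more self-contained) route than the paper's, because the paper does not prove this lemma at all: it simply cites Benson \cite[Thm.~3.14.6]{Benson}, which is Jennings' theorem describing the radical-grading of $kG$ via the Jennings series. You instead give an elementary argument for each inequality. For the upper bound, the reduction through a central $C_p$ — using that $J(kG)^m\subseteq (z-1)kG$ together with centrality of $z-1$ and $(z-1)^p=0$ to conclude $J(kG)^{pm}=0$ — is clean and entirely self-contained; it neatly sidesteps the part of Jennings' theory that is actually needed to get the upper bound (the structural constraints on the Jennings series), as you note. For the lower bound, factoring the norm element $N=\sum_{g\in G}g$ through a composition chain $1=G_0<G_1<\cdots<G_r=G$ with $[G_i:G_{i-1}]=p$, via the unique normal form $g_r^{a_r}\cdots g_1^{a_1}$ and the characteristic-$p$ identity $\sum_{a=0}^{p-1}t^a=(t-1)^{p-1}$, correctly exhibits a nonzero element of $J(kG)^{r(p-1)}$. (As you observe, this direction is also immediate from Jennings' formula since each $d_i$ is weighted by $i\geq 1$.) What Jennings' theorem buys is much finer information — the full Poincar\'e series of $\operatorname{gr}J(kG)$ — whereas your argument buys a short, elementary proof of exactly the two bounds needed, using only the existence of a central element of order $p$ and of a chain of subgroups of index $p$. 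Both approaches are valid; the paper's citation is the economical choice given that Benson is already in its bibliography, but your argument would be a reasonable replacement if one wanted the paper self-contained on this point.
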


Note that the nilpotency index of $J(k(C_{\!p}^{r}))$ is $r(p-1)+1$.

\begin{pro}\label{pr:lower-bound}
Let $k$ be a field of characteristic $p$.
If $G$ is a group of order $p^r\!$, then
the ghost number of $kG$ is at least $(r-1)(p-1)+1$.
\end{pro}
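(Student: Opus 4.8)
The plan is to exhibit a single, large $kG$-module whose ghost length can be computed \emph{exactly} using Theorem~\ref{th:rl=gel}, and then to bound that ghost length below by applying Jennings' inequality (Lemma~\ref{le:Jen}) to the relevant quotient group.

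First I would choose a central element $z \in G$ of order $p$ — such an element exists because $Z(G)$ is a non-trivial $p$-group — and set $C = \langle z \rangle \cong C_p$, which is a cyclic normal subgroup of $G$. The module to consider is the induced module $M = k \up_C^G = kG \otimes_{kC} k$. Since the trivial $kC$-module has dimension $1$ and $1 \leq \lceil \frac{p-1}{2} \rceil$, Theorem~\ref{th:rl=gel} applies with $C$ (of order $p$) in the role of the cyclic normal subgroup and with $n = 1$, giving $\gl(M) = \gel(M) = \rl M$. As $M$ is finitely generated and $\thick k = \stmod{kG}$, this already yields $\gn kG \geq \gl(M) = \rl M$, so it remains only to estimate $\rl M$ from below. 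Next I would identify $M$ concretely: choosing coset representatives shows $M \cong k[G/C]$ as a permutation module, and since $C$ is normal this is just the regular representation of $k(G/C)$ inflated to $kG$ along the quotient map $G \twoheadrightarrow G/C$. Because the $kG$-action on $M$ factors through $k(G/C)$, each $g-1 \in kG$ acts on $M$ as the corresponding element $\bar g - 1 \in k(G/C)$, and hence (using that for a $p$-group the radicals $\rad^i$ are spanned by such products, as recorded above) one has $\rad^i_{kG}(M) = \rad^i_{k(G/C)}(M)$ for all $i$; consequently $\rl M$ equals the nilpotency index of $J(k(G/C))$. Finally $G/C$ has order $p^{r-1}$, so Lemma~\ref{le:Jen} gives that this nilpotency index is at least that of $J(k(C_{\!p}^{r-1}))$, which equals $(r-1)(p-1)+1$. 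Chaining the inequalities, $\gn kG \geq \rl M \geq (r-1)(p-1)+1$.

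The argument is essentially a one-liner once Theorem~\ref{th:rl=gel} is available, and the only technical point — that inflation along $G \twoheadrightarrow G/C$ leaves the radical series unchanged — is elementary. The real content is the choice of test module. A tempting alternative, inducting on $r$ and applying Proposition~\ref{pr:induced-from-cyclic} along a chief series, loses a summand $p-1$ at each step for which the centre of the group then in play lies inside the chosen subgroup of index $p$ (this occurs, for instance, for extraspecial $p$-groups), and so it produces only a bound of the wrong order of magnitude. What makes the present approach sharp is that it uses the \emph{equality} $\gl(M) = \rl M$ of Theorem~\ref{th:rl=gel} for the large module $k \up_C^G$, rather than merely the additivity inequality of Proposition~\ref{pr:induced-from-cyclic}; so the only real obstacle is recognizing the right module, not any computation.
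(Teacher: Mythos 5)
Your proof is correct and is the same argument as the paper's, just spelled out in more detail: the paper also applies Theorem~\ref{th:rl=gel} with $n=1$ to a central subgroup $C$ of order $p$, uses the remark that $k\up_C^G\cong kH\down_G$ for $H=G/C$ so that $\gl(k\up_C^G)=\rl(kH)$, and then invokes Lemma~\ref{le:Jen} for $H$ of order $p^{r-1}$. The only added content in your write-up is the explicit (and correct) justification that inflation preserves the radical series, which the paper leaves implicit.
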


\begin{proof}
The group $G$ has a quotient $H$ of order $p^{r-1}$.
By Theorem~\ref{th:rl=gel}, $\rl(kH)$ is a lower bound for the ghost number of $kG$.
Now by the previous lemma, $\rl(kH) \geq (r-1)(p-1)+1$,
so we are done.
\end{proof}

\begin{thm}
The following is a complete list of the $p$-groups $G$ such that $kG$ has 
the specified ghost number:
\begin{itemize}
\item [1:] the abelian groups $C_2$ and $C_3$;
\item [2:] the abelian groups $C_4$, $C_2 \times C_2$ and $C_5$;
\item [3:] the abelian groups $C_7$, $C_3 \times C_3$ and $C_2 \times C_2 \times C_2$,
           the dihedral group $D_8$ of order 8,
           and possibly the quaternion group $Q_8$, which has ghost number 3 or 4.
\end{itemize}
In each case, except possibly for $Q_8$, the generating number equals the ghost number.
\end{thm}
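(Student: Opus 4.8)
The plan is to combine the general lower bound of Proposition~\ref{pr:lower-bound} with the exact computations already recorded for the relevant small groups. The case of ghost number $1$ is immediate: the ghost number of $kG$ is $1$ precisely when the generating hypothesis holds for $\StMod{kG}$, and since $G$ is its own Sylow $p$-subgroup, Theorem~\ref{th:bcccm} then forces $G$ to be $C_2$ or $C_3$; conversely these two groups have ghost number $1$, and for them the generating hypothesis holds, so the generating number is also $1$.

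For ghost number at most $3$ I would first bound $|G|$. Writing $|G|=p^r$, Proposition~\ref{pr:lower-bound} gives ghost number at least $(r-1)(p-1)+1$, so $(r-1)(p-1)\leq 2$, whose solutions are $r=1$ with $p$ arbitrary, $r=2$ with $p\in\{2,3\}$, and $r=3$ with $p=2$. Thus $|G|$ is a prime, or $4$, $8$, or $9$, and it remains to run through the short standard lists of groups of these orders. When $|G|=p$ the group is cyclic, and by the cyclic case of Theorem~\ref{th:ab} (equivalently \cite[Thm.~5.4]{Gh in rep}) its ghost number is $\lceil(p-1)/2\rceil$, which is at most $3$ only for $p\leq 7$; this yields $C_2,C_3$ with ghost number $1$, $C_5$ with ghost number $2$, and $C_7$ with ghost number $3$. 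When $|G|=4$, the group $C_4$ has ghost number $\lceil 3/2\rceil=2$, and for $C_2\times C_2=D_4$ both bounds in Theorem~\ref{th:ab} equal $2$. When $|G|=8$, the five groups are $C_8$, $C_4\times C_2$, $C_2\times C_2\times C_2$, $D_8$ and $Q_8$: Theorem~\ref{th:ab} gives ghost number $\lceil 7/2\rceil=4$ for $C_8$ and, likewise, ghost number $4$ for $C_4\times C_2$ (both of its bounds equal $4$), so these two are excluded, while its bounds equal $3$ for $C_2\times C_2\times C_2$, pinning down both its ghost number and its generating number; Corollary~\ref{co:D_4q} with $q=2$ gives ghost number and generating number $q+1=3$ for $D_8$; and Proposition~\ref{prop:Q_8} gives ghost number $3$ or $4$ for $Q_8$. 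When $|G|=9$, the group $C_9$ has ghost number $\lceil 8/2\rceil=4$ and is excluded, while $C_3\times C_3$ has ghost number and generating number $3$ by Theorem~\ref{th:C_3xC_3} and Corollary~\ref{co:C_3xC_3} (cf.\ Proposition~\ref{pr:abelian-gn-3}). Collecting these facts yields exactly the stated lists.

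Finally, for the groups on the lists other than possibly $Q_8$ the generating number equals the ghost number: this holds for $C_2$ and $C_3$ as noted above; the cyclic groups $C_4$, $C_5$, $C_7$ and the quaternion group $Q_8$ have periodic Tate cohomology, so their generating and ghost numbers coincide by the remarks in Section~\ref{ss:pc} (for $Q_8$ this forces both to be $3$ or both to be $4$, without determining which); and for $C_2\times C_2$, $C_2\times C_2\times C_2$, $D_8$ and $C_3\times C_3$ the computations cited above already give matching values. Since every hard ingredient---the abelian formula of Theorem~\ref{th:ab}, the dihedral computation of Corollary~\ref{co:D_4q}, the $C_3\times C_3$ computation of Theorem~\ref{th:C_3xC_3}, and the quaternion bound of Proposition~\ref{prop:Q_8}---is already available, the remaining work is purely organizational; the main point to get right is that Proposition~\ref{pr:lower-bound} genuinely restricts the admissible orders to $\{p\}\cup\{4,8,9\}$ and that no group of order $4$, $8$, or $9$ is overlooked, in particular that $C_4\times C_2$ is recorded with ghost number $4$ rather than $3$. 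The one question that cannot be settled is whether $Q_8$ belongs on the ghost-number-$3$ list.
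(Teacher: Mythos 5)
Your argument is correct and follows essentially the same route as the paper: use Proposition~\ref{pr:lower-bound} to restrict the possible orders, then run through the known computations for the small groups that survive. The paper shortcuts the ghost-number-$1$ and ghost-number-$2$ cases by citing~\cite{GH for p} and~\cite{Gh in rep} directly, and only spells out the order analysis for the value $3$ (getting $p^r\in\{8,9\}$ for the non-prime, non-square cases), whereas you carry out the bound $(r-1)(p-1)\le 2$ uniformly and then enumerate all orders in $\{p\}\cup\{4,8,9\}$; the two routes are the same in substance. One small bonus in your write-up: you observe that $Q_8$ has periodic Tate cohomology, so by the remarks in Section~\ref{ss:pc} its generating number equals its ghost number, which sharpens the paper's ``except possibly for $Q_8$'' caveat (the uncertainty for $Q_8$ is only which of $3$ or $4$ the common value is, not whether the two invariants agree). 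Your bookkeeping of Theorem~\ref{th:ab} for $C_4\times C_2$ and $C_8$ (both ghost number $4$), and of the five groups of order $8$ and two of order $9$, checks out.
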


\begin{proof}
The case of ghost number 1 is the main result of~\cite{GH for p}.

A non-abelian $p$-group must have order $p^r$ for $r \geq 3$, so by
Proposition~\ref{pr:lower-bound} it must have ghost number at least 3.
Thus a $p$-group of ghost number 2 must be abelian, and this case is
proved in~\cite{Gh in rep}.

The only ways for $(r-1)(p-1)+1$ to equal 3 are $p^r = 8$ or 9.
The non-abelian groups of order 8 are $D_8$ and $Q_8$,
which are discussed in Corollary~\ref{co:D_4q}, Theorem~\ref{th:C_3xC_3}
and Corollary~\ref{co:C_3xC_3},
and there are no non-abelian groups of order 9.
The abelian case is Proposition~\ref{pr:abelian-gn-3}.
\end{proof}

Next we observe that, for a fixed prime $p$,
not all positive integers can be the ghost number of some $p$-group.
For example, since the generating hypothesis fails for $p>3$,
the number $1$ cannot be the ghost number of a $p$-group with $p>3$.
On the other hand, the elementary abelian $2$-group of rank $l$
has ghost number $l-1$,
so every positive integer can be a ghost number at the prime $2$.
Here is a result giving gaps in the possible ghost numbers at odd primes.

\begin{thm}
Let $p$ be an odd prime, and let $k$ be a field of characteristic $p$.
Write $(l_1, l_2, l_3, \ldots)$ for the increasing sequence
of integers that are ghost numbers of the group algebras $kG$,
with $G$ being a $p$-group.
Then $l_1 = \frac{p-1}{2}$,
\[\frac{3(p-1)}{2} \leq l_2 = \text{ghost number of }C_p \times C_p \leq 2p-3,\] 
and $\min(\frac{p^2-1}{2},2p-1) \leq l_3$.
\end{thm}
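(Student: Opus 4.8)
The plan is to enumerate the $p$-groups of smallest order and read off the needed facts about their ghost numbers from results already established. The three inputs I would assemble are the following. First, the cyclic case of Theorem~\ref{th:ab} (equivalently \cite[Thm.~5.4]{Gh in rep}) says that the ghost number of $kC_{p^n}$ equals $\lceil (p^n-1)/2\rceil$; for odd $p$ this is $(p-1)/2$ when $n=1$ and $(p^2-1)/2$ when $n=2$. Second, Theorem~\ref{th:ab} together with Remark~\ref{rmk:rank 2} show that the ghost number of $k(C_p\times C_p)$ lies in the interval $[\,3(p-1)/2,\ 2p-3\,]$, the lower bound being $\lceil (p-1)/2\rceil + (p-1)$ and the upper bound being $2p-3$. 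Third, Proposition~\ref{pr:lower-bound} says that a $p$-group of order $p^r$ has ghost number at least $(r-1)(p-1)+1$, so every $p$-group of order at least $p^3$ has ghost number at least $2p-1$.

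Next I would record the elementary numerical separations that hold for every odd prime $p$:
\[
  \frac{p-1}{2} \;<\; \frac{3(p-1)}{2} \;\leq\; 2p-3 \;<\; \min\!\left(\frac{p^2-1}{2},\, 2p-1\right),
\]
where the last inequality amounts to $2p-3<2p-1$ together with $2(2p-3)=4p-6<p^2-1$, and the latter is $p^2-4p+5>0$, which holds for all $p$. Consequently the ghost number of $kC_p$ is strictly below that of $k(C_p\times C_p)$, which is in turn strictly below both the ghost number $(p^2-1)/2$ of $kC_{p^2}$ and the lower bound $2p-1$ valid for all $p$-groups of order $\geq p^3$.

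To conclude, I would use that the only $p$-group of order $p$ is $C_p$ and that the only $p$-groups of order $p^2$ are $C_{p^2}$ and $C_p\times C_p$, since every group of order $p^2$ is abelian. Thus the value $(p-1)/2$ is contributed by $C_p$, a value in $[\,3(p-1)/2,\ 2p-3\,]$ is contributed by $C_p\times C_p$, and every other $p$-group contributes a value that is $\geq \min((p^2-1)/2,\,2p-1)$. Since the first two values are distinct and both strictly below $\min((p^2-1)/2,\,2p-1)$, the smallest achieved ghost number is $l_1=(p-1)/2$, the second smallest is $l_2$, the ghost number of $k(C_p\times C_p)$, lying in $[\,3(p-1)/2,\ 2p-3\,]$, and any achieved ghost number that exceeds $l_2$ must come from a $p$-group of order $\geq p^2$ other than $C_p\times C_p$ (it cannot come from $C_p$, whose ghost number is $l_1<l_2$), hence is $\geq\min((p^2-1)/2,\,2p-1)$. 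This yields $l_3\geq\min((p^2-1)/2,\,2p-1)$.

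I do not anticipate a real obstacle; the whole argument is bookkeeping with the enumeration of small $p$-groups and bounds proved earlier. The one point that requires care is that one genuinely needs the sharp upper bound $2p-3$ for the ghost number of $k(C_p\times C_p)$ from Remark~\ref{rmk:rank 2}, not merely the bound $m-1=2p-2$ from Theorem~\ref{th:finite gl}: when $p=3$ the latter equals $(p^2-1)/2=4$, which would fail to separate $l_2$ from the ghost number of $kC_9$.
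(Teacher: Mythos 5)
Your proposal is correct and takes essentially the same approach as the paper: read off the ghost number of $kC_p$ and $kC_{p^2}$ from the cyclic case, bound the ghost number of $k(C_p\times C_p)$ by Theorem~\ref{th:ab} and Remark~\ref{rmk:rank 2}, apply Proposition~\ref{pr:lower-bound} for groups of order $\geq p^3$, and compare. Your closing remark that the sharp bound $2p-3$ (rather than $2p-2$) is needed to separate $l_2$ from the ghost number of $kC_{p^2}$ when $p=3$ is a useful observation, but the argument is otherwise identical.
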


\begin{proof}
We know that
the ghost number of $C_p$ is $\frac{p-1}{2}$ and
that of $C_{p^2}$ is $\frac{p^2-1}{2}$~\cite[Thm.~5.4]{Gh in rep}.
And the ghost number of $C_p \times C_p$ is constrained by
Theorem~\ref{th:ab} and Remark~\ref{rmk:rank 2}:
\[\frac{3(p-1)}{2} \leq \text{ghost number of }C_p \times C_p \leq 2p-3 . \] 
By Proposition~\ref{pr:lower-bound}, the groups of order $p^r$ with $r \geq 3$ have
ghost numbers at least $2p-1$.
Comparing these numbers, we get
\[ \frac{p-1}{2} \leq 2p-3 \leq \min(\frac{p^2-1}{2},2p-1),\]
and the theorem follows.
\end{proof}

Thus one sees that for large primes there are large gaps in the
sequence of possible ghost numbers.

Observe that when $p \geq 5$,
\[
\text{the ghost number of } k(C_{\!p}^3) \leq 3p-3 \leq \frac{p^2-1}{2} =
\text{the ghost number of } kC_{p^2},
\] 
where the first inequality uses Theorem~\ref{th:finite gl}.
And by Theorem~\ref{th:ab} and Proposition~\ref{pr:lower-bound},
the ghost number of $k(C_{\!p}^r)$ is no more than the ghost number of any $p$-group with larger size.
We conjecture that this is also true for groups of the same size,
which would imply that $l_3$ is the ghost number of $k(C_{\!p}^3)$ when $p \geq 5$.
The following conjecture should be viewed as the stabilized version of Lemma~\ref{le:Jen}.

\begin{conj}
Let $k$ be a field of characteristic $p$. 
If $G$ is a $p$-group of order $p^r$, then
\[ \text{ghost number of } k(C_{\!p}^r) \leq
   \text{ghost number of } kG       \leq
   \text{ghost number of } k(C_{p^r}) . \]
\end{conj}

\end{document}